\documentclass[12pt]{article}
\usepackage{amsmath,amssymb,amsthm, amsfonts}
\usepackage{etaremune, enumerate, float, verbatim}
\usepackage{hyperref}
\usepackage{graphicx}
\usepackage{tikz}
\usetikzlibrary{backgrounds}
\usetikzlibrary{shapes.geometric}
\usepackage{fancyhdr}
\usepackage{graphicx, subcaption, caption}
\usepackage[algo2e,ruled,vlined]{algorithm2e}
\usepackage{color}
\usepackage{url}
\usepackage[mathlines]{lineno}
\usepackage{graphicx, subcaption, caption}
\usepackage[algo2e,ruled,vlined]{algorithm2e}
\usepackage{color}
\definecolor{red}{rgb}{1,0,0}

\definecolor{blue}{rgb}{0,.3,1}

\definecolor{green}{rgb}{0,.8,0} %
\def\gre{\color{green}}
\definecolor{purp}{rgb}{.5,0,.5}

\numberwithin{figure}{section}   

\tikzstyle{vertex}=[circle, draw=black, fill=white, thick, inner sep=0pt, minimum size=6pt]
\tikzstyle{Bvertex}=[circle, black, fill, draw=black, inner sep=0pt, minimum size=6pt]
\tikzstyle{vtx}=[circle, white, fill=white, draw=black, thick, inner sep=0pt, minimum size=6pt]
\tikzstyle{gvertex}=[circle, green, fill, draw=black, inner sep=0pt, minimum size=6pt]
\newcommand{\vertex}{\node[vertex]}
\newcommand{\vtx}{\node[vtx]}
\newcommand{\Bvertex}{\node[Bvertex]}
\newcommand{\gvertex}{\node[gvertex]}

\tikzset{
triangle/.style={inner sep=1.2pt, outer sep=0pt, regular polygon, regular polygon sides=3, draw, fill=gray, thick, minimum size=6pt},
square/.style={inner sep=1.7pt, outer sep=0pt, regular polygon, regular polygon sides=4, draw, fill=black, thick, minimum size=6pt},
}

\setlength{\textheight}{8.8in}
\setlength{\textwidth}{6in}
\voffset = -14mm
\hoffset = -10mm

\newtheorem{thm}{Theorem}[section]
\newtheorem{cor}[thm]{Corollary}
\newtheorem{lem}[thm]{Lemma}
\newtheorem{prop}[thm]{Proposition}

\newtheorem{obs}[thm]{Observation}

\theoremstyle{definition}
\newtheorem{rem}[thm]{Remark}

\theoremstyle{definition}
\newtheorem{defn}[thm]{Definition}

\theoremstyle{definition}
\newtheorem{ex}[thm]{Example}

\newcommand{\ZZ}{\mathbb{Z}}

\newcommand{\F}{\mathcal F}

\newcommand{\pd}{\gamma_P}

\newcommand{\thpd}{\operatorname{th}_{\rm pd}}
\newcommand{\thpdx}{\operatorname{th}_{\rm pd}^{\x}}

\newcommand{\ptpd}{\operatorname{pt}_{\rm pd}}

\newcommand{\ppt}{\operatorname{pt}_{\rm pd}}
\newcommand{\rd}{\operatorname{rd}}

\newcommand{\x}{\times}

\newcommand{\bit}{\begin{itemize}}
\newcommand{\eit}{\end{itemize}}
\newcommand{\ben}{\begin{enumerate}}
\newcommand{\een}{\end{enumerate}}
\newcommand{\beq}{\begin{equation}}
\newcommand{\eeq}{\end{equation}}
\newcommand{\bea}{\begin{eqnarray*}} %
\newcommand{\eea}{\end{eqnarray*}}
\newcommand{\bpf}{\begin{proof}}
\newcommand{\epf}{\end{proof}\ms}
\newcommand{\bmt}{\begin{bmatrix}}
\newcommand{\emt}{\end{bmatrix}}
\newcommand{\ms}{\medskip}

\newcommand{\lc}{\left\lceil}
\newcommand{\rc}{\right\rceil}
\newcommand{\lf}{\left\lfloor}
\newcommand{\rf}{\right\rfloor}

\newcommand{\noi}{\noindent}

\title{Product throttling for power domination}
\author{Sarah E. Anderson\thanks{Department of Mathematics, University of St. Thomas,
St. Paul, MN 55105, USA
(ande1298@stthomas.edu).} \and
Karen L. Collins\thanks{Department of Mathematics and Computer Science, Wesleyan University, Middletown, CT 06459, USA (kcollins@wesleyan.edu).}\and
Daniela Ferrero\thanks{Department of Mathematics, Texas State University, San Marcos, TX 78666, USA (dferrero@txstate.edu).}\and
Leslie Hogben\thanks{Department of Mathematics, Iowa State University,
Ames, IA 50011, USA and American Institute of Mathematics, %
San Jose, CA 95112, USA
(hogben@aimath.org).}\and
Carolyn Mayer \thanks{Department of Mathematical Sciences, Worcester Polytechnic Institute,
Worcester, MA 01609, USA and Sandia National Laboratories, Albuquerque, NM 87185, USA
(cdmayer@sandia.gov).}\and
Ann N. Trenk \thanks{Department of Mathematics, Wellesley College,
Wellesley, MA 02481, USA
(atrenk@wellesley.edu).}\and
Shanise Walker\thanks{Department of Mathematics, University of Wisconsin-Eau Claire,
Eau Claire, WI 54701, USA (walkersg@uwec.edu).}}

\begin{document}
\maketitle

\vspace{-10pt}

\begin{abstract}
 The product power throttling number of a graph is defined  to study product throttling for power domination. The domination number of a graph is an upper bound for its product power throttling number. %
 It is established that the two parameters are equal for certain families including paths, cycles, complete graphs, %
 unit interval graphs, and grid graphs (on the plane, cylinder, and torus).  
 Families of graphs for which the product power throttling number is less than the domination number are also exhibited.  Graphs with extremely high or low product power throttling number are characterized and bounds on the product power throttling number %
 are established.  \end{abstract}

\noi {\bf Keywords} Power domination, throttling, product throttling, product power throttling, graph searching

\noi{\bf AMS subject classification} 05C69, 05C57, 68R10, 91A43, 94C15 %

\section{Introduction}\label{sintro}

Many graph search processes observe all vertices starting with an initial set of vertices through a process consisting of rounds or discrete time steps.  
Throttling minimizes the sum or product of the resources used to accomplish a task (number of initial vertices) and the time (number of rounds) needed to complete that task. 
 Many of the graph parameters for which throttling has been studied arose from applications.  One such parameter is the power domination number, which originated from the problem of optimal placement  of Phasor Measurement Units (PMUs)  to monitor an electric power network at minimum cost.

The power domination problem was modeled using graphs by Haynes et al.~in
\cite{HHHH02}; Brueni and Heath \cite{BH} showed that a simplified version of the propagation rules  is equivalent to the original version in \cite{HHHH02}, and we use their propagation rules. 
Let $G$ be a graph and let $S$ be a non-empty subset of vertices of $G$; $N[S]$ denotes the closed neighborhood of $S$. Define the sequences of sets $P^{(i)}(S)$ and $P^{[i]}(S)$ by the following recursive rules:  \vspace{-3pt}
\ben[(1)] 
\item\label{domstep} $P^{[0]}(S)=P^{(0)}(S)=S$, $P^{[1]}(S)=N[S]$ and $P^{(1)}(S)=N[S]\setminus S$.  \vspace{-3pt}
\item\label{zfproc} For $i\ge 1$,\vspace{-3pt}
\bea P^{(i+1)}(S)&=& \big \{w\in V(G)\setminus P^{[i]}(S) : \exists u\in P^{[i]}(S), \,N_G(u)\setminus P^{[i]}(S)=\{w\} \big \},\\ \vspace{-3pt}
P^{[i+1]}(S)&=&P^{[i]}(S)\cup P^{(i+1)}(S).\vspace{-3pt}
\eea
\een\vspace{-6pt}

For $v\in  P^{(k)}(S)$, we say \emph{$v$ is observed in round $k$}.
If every vertex is observed in some round,  then $S$ is a {\it power dominating set} of $G$. The \emph{power domination number} of   $G$, denoted by $\pd(G)$, is the minimum cardinality of a power dominating set. 
 When $S$ is a power dominating set, the least positive integer $t$ with the property that $P^{[t]}(S)=V(G)$ is the {\it power propagation time} of $S$ in $G$, denoted by $\ppt(G;S)$; if $S$ is not a power dominating set, then $\ppt(G;S)=\infty$. We require $t$ to be positive because we adopt the perspective that  step \eqref{domstep} of power domination always occurs, so $\ptpd(G;S)\ge 1$ for every $S$, including $S=V(G)$.\footnote{In the original definition of power propagation time in \cite{FHKY17}, $\ptpd(G;V(G))= 0$.}  For $k\in\ZZ^+$, $\ppt(G,k)=\min_{|S|=k}\ppt(G;S)$ and the \emph{power propagation time of $G$} is $\ppt(G)=\ppt(G,\pd(G))$.  

The large scale deployment of wide area measurement systems of PMUs started in 2010 and continues growing \cite{SW18}. The analysis of available systems has shown that minimizing the number of PMUs alone yields unsatisfactory state estimation, primarily due to the loss of information in the event of transmission failures \cite{Sun19}. Since failures are inevitable, the proposed solution is to add redundancy \cite{PB18, SW18}. While higher levels of redundancy imply larger numbers of PMUs, which result in increased costs, it has been observed that adding even a few redundant PMUs has a number of advantages that offsets the cost increase \cite{PB18}. As a result, nowadays the PMU placement problem seeks a compromise between the cost of adding redundancy and the improvements in the upgraded system. In terms of power domination, this new approach to the PMU placement problem 
creates the need to study properties of the graph propagation process associated with a power dominating set in addition to its cardinality, as minimum power dominating sets might {no longer} correspond to the best choice of PMU placements. In this work {we study a combination of the number of  PMUs and the number of rounds in the power domination propagation process}, using a parameter that has proven successful in other forms of graph searching. 

Throttling sums was studied first and has been studied more widely than throttling products. Brimkov et al.~defined the (sum) power domination throttling number in \cite{powerdom-throttle}.  In this paper we introduce product throttling for power domination, establish bounds, provide conditions sufficient to guarantee the product power throttling number equals the domination number, and show that these parameters are equal for various families of graphs. %

\begin{defn} Let $G$ be a graph.  For a set $S\subseteq V(G)$, $\thpdx(G;S)=|S|\ptpd(G;S)$.    The {\em product power throttling number} of $G$ is
\[\thpdx(G)=\min_{S\subseteq V(G)}\thpdx(G;S)=\min_{S\subseteq V(G)}|S|\ptpd(G;S).\]
For $k\in\ZZ^+$, $\thpdx(G,k)=\min_{|S|=k}\thpdx(G;S)$.  \end{defn}

The product power throttling number, $\thpdx(G)$,  and the (sum) power domination throttling number, $\thpd(G):=\min_{S\subseteq V(G)}|S|+\ptpd(G;S)$, are noncomparable. %
For $K_n$, one vertex observes all vertices in one round, so $\thpdx(K_n)=1$, whereas $\thpd(K_n)=2$.   From Proposition \ref{p:pathcycle} below, $\thpdx(P_n)=\lc\frac n 3 \rc$, whereas  $\thpd(P_n)=\lc \sqrt {2n}-\frac 1 2\rc$ \cite{powerdom-throttle}.

 A main theme of this work is that for many graphs $\thpdx(G)=\gamma(G)$.  Graph families for which this is established include paths and cycles (Section \ref{s:thdpx=gamma}), unit interval graphs (Section \ref{s:unit-interval}), and Cartesian products of complete graphs with complete graphs, and of path or cycles with paths or cycles (Section \ref{s:Cart-prod}). We also  characterize connected graphs of order $n$ having $\thpdx(G)=1,2,$ and $\frac n 2$ in Section \ref{s:extreme};  Section \ref{s:bounds} contains preliminary results.

In the remainder of this introduction we present additional terminology and make some elementary observations.
  Note that $P^{(k)}(S)$ is the set of vertices that are first observed in round $k$, and the sets $P^{(0)}(S),P^{(1)}(S),\dots,$ $P^{(\ptpd(G;S))}(S)$ partition the vertices of  $G$ when $S$ is a power dominating set of $G$. For each $v \in V(G)$,  define the {\em round  function}, $\rd(v)$, to be number of the round in which vertex $v$ is first observed.  That is,    $\rd(v)=k$ for $v\in P^{(k)}(S)$.

Power domination can be thought of as a domination step \eqref{domstep} followed by a {zero forcing} process \eqref{zfproc}.  A set $S\subseteq V(G)$  \emph{dominates} a graph $G$ if $V(G)=N[S]$.  The \emph{domination number} of   $G$, denoted by $\gamma(G)$, is the minimum cardinality of a dominating set.  \emph{Zero forcing} is a coloring game on a graph, where the goal is to color all the vertices blue (starting with  each vertex colored blue or white). White vertices are colored blue by applying the following \emph{color change rule}:
A blue vertex $u$ can change the color of a white vertex $w$ to blue if $w$ is the unique white neighbor of $u$; in this case we say \emph{$u$ forces $w$} and write $u\to w$. 
A set $B$ is a {\it zero forcing set} of $G$ if all the vertices of $G$ can be colored blue by repeated application of the color change rule when starting with the vertices in $B$ blue and the other vertices white.
The domination step in power domination takes the set $S$ to  $N[S]$, and $S$ is a power dominating set of $G$ if and only if $N[S]$ is a zero forcing set of $G$. A blue vertex in zero forcing corresponds to an observed vertex in power domination, because  $u\in P^{[i]}(S)$ and $N_G(u)\setminus P^{[i]}(S)=\{w\}$ is equivalent to saying that after the $i$ round, $w$ is the only unobserved neighbor of $u$, so $u\to w$ is possible.

Notice that in power domination we have performed all independently possible observations simultaneously, whereas in  zero forcing as just defined, we perform one color change at a time (and choose which vertex forces $w$ if more than one vertex could force $w$).  Both perspectives are useful.  
For zero forcing, we can start with a set $B$ of blue vertices and in each round we perform  all possible forces that can be done independently of each other (this is propagation for zero forcing -- see \cite{proptime}).  
Sometimes it is necessary to record how the forcing part of the power domination process is carried out.   If $i\ge 1$ and there is at least one vertex  $u\in P^{[i]}(S)$ such that $N_G(u)\setminus P^{[i]}(S)=\{w\}$, then one such $u$ is chosen as the vertex to force $w$, denoted by $u\to w$.  In the dominating step, for each vertex $w\in N[S]\setminus S$, we choose an $x\in S$ such that $w\in N(x)$ and record $x\to w$ as a force.  When it is desired to distinguish these two kinds of forces, a force in step \eqref{domstep} is called a {\em domination force} and a  force in step \eqref{zfproc} is called a {\em zero force}. For a given set $S$, we construct the set of all observed vertices, recording each force in order. We consider only  a \emph{propagating} set of forces, in which $\rd(u)<\rd(v)$ implies $u$ is forced before $v$ in the ordered list of forces. The symbol $\F$ is used to denote the \emph{set  of forces}. %
Given a power dominating set $S$ and set of forces $\F$, a {\em forcing chain} is a sequence $v_0\to v_1\to \dots\to v_a$ such that $v_{i-1}\to v_i\in\F$ for $i=1,\dots,a$.

\begin{obs} If $v_0\to v_1\to\dots\to v_a$ is a forcing chain for a given set $\F$ of forces of a power dominating set $S$ of $G$, then $\rd(v_i)\ge i$, because $\rd(v_0)\ge 0$ and $\rd(v_{i+1})\ge \rd(v_i)+1$.
\end{obs}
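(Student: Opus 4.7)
The plan is to prove the observation by induction on $i$, following exactly the hint embedded in its statement. The base case is immediate from the definition of the round function $\rd$: since $v_0$ is first observed in some round, $\rd(v_0)$ is a nonnegative integer, so $\rd(v_0)\ge 0$. Assuming inductively that $\rd(v_i)\ge i$, it then suffices to establish the per-edge inequality $\rd(v_{i+1})\ge \rd(v_i)+1$, after which $\rd(v_{i+1})\ge i+1$ follows.

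To prove the per-edge inequality, I would split into cases according to whether $v_i\to v_{i+1}$ is a domination force or a zero force in $\F$. If it is a domination force, then by the definition just before the observation we have $v_i\in S$ and $v_{i+1}\in N[S]\setminus S$, so $\rd(v_i)=0$ and $\rd(v_{i+1})\ge 1 = \rd(v_i)+1$. If it is a zero force, then by rule \eqref{zfproc} there exists a round index $k\ge 1$ such that $v_i\in P^{[k]}(S)$ and $v_{i+1}\in P^{(k+1)}(S)$; translating these memberships into the round function gives $\rd(v_i)\le k$ and $\rd(v_{i+1})=k+1$, so again $\rd(v_{i+1})\ge \rd(v_i)+1$.

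There is essentially no obstacle here; the argument is forced by the definitions. The only subtlety worth flagging is that a forcing chain may begin with a single domination force (when $v_0\in S$) and afterward consist only of zero forces, so the case split must be applied edge by edge rather than once for the whole chain. Since $\F$ is propagating and $v_1\notin S$ once a domination force has been used, no ambiguity arises. Combining the base case with the inductive step gives $\rd(v_i)\ge i$ for all $0\le i\le a$, completing the proof.
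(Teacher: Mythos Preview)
Your proposal is correct and follows exactly the reasoning the paper embeds in the statement of the observation itself; the paper gives no separate proof beyond the phrase ``because $\rd(v_0)\ge 0$ and $\rd(v_{i+1})\ge \rd(v_i)+1$,'' and your induction with the domination/zero force case split simply unpacks that clause using the definitions in the introduction.
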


\begin{rem} \label{assign} Let $S$ be a power dominating set of $G$. It is well known that the number of vertices forced  in each round of zero forcing cannot exceed the number of initial blue vertices. After the first round, power domination uses the zero forcing process, so the number of observed vertices that have an unobserved neighbor is at most  $|P^{(1)}(S)|$.  Thus $|P^{(i+1)}(S)|\leq |P^{(1)}(S)|$, for all $i\geq 0$.
\end{rem} 

Since electrical power networks are modeled by connected simple finite undirected graphs, in this work we assume every graph $G$ has these properties (although `connected' is listed as a hypothesis in  results since power domination has been studied in graphs that need not be connected). %

\section{Preliminary results}\label{s:bounds}

In this section we present bounds on the product power throttling number in terms of other graph parameters.  %
\begin{obs}\label{completethpdx}
For a connected graph $G$, $\thpdx(G)\ge 1$; moreover, $\thpdx(G)=1$ if and only if  $\gamma(G)=1$. This implies that $\thpdx(K_n)=1$ and  $\thpdx(K_{1,n-1})=1$.
\end{obs}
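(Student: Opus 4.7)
The plan is to unpack the definitions. Since $|S|$ is a positive integer for any nonempty $S \subseteq V(G)$ and $\ptpd(G;S) \ge 1$ by the convention highlighted after the definition of $\ptpd$ (even $S = V(G)$ gives propagation time at least $1$ because the domination step is always performed), we immediately get $\thpdx(G;S) = |S|\ptpd(G;S) \ge 1$ whenever $S$ is a power dominating set, and $\thpdx(G;S) = \infty$ otherwise. Taking the minimum over all $S$ yields $\thpdx(G) \ge 1$. (Connectedness ensures $\gamma(G)$ and hence $\pd(G)$ is finite, so a power dominating set exists.)

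For the characterization, I would prove the two directions separately. If $\gamma(G) = 1$, pick a vertex $v$ with $N[v] = V(G)$ and take $S = \{v\}$; then $P^{[1]}(S) = N[S] = V(G)$ by step \eqref{domstep}, so $\ptpd(G;S) = 1$ and $\thpdx(G) \le |S|\ptpd(G;S) = 1$, which combined with the lower bound gives equality. Conversely, if $\thpdx(G) = 1$, then there is an $S$ with $|S|\ptpd(G;S) = 1$; since both factors are positive integers, each must equal $1$. Writing $S = \{v\}$, the condition $\ptpd(G;S) = 1$ means $P^{[1]}(S) = V(G)$, and since $P^{[1]}(S) = N[S]$, the vertex $v$ dominates $G$, so $\gamma(G) = 1$.

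The consequences for $K_n$ and $K_{1,n-1}$ are immediate: any vertex of $K_n$ dominates, and the center vertex of $K_{1,n-1}$ dominates, so both have $\gamma = 1$ and therefore $\thpdx = 1$.

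No step here is a real obstacle; the only subtle point is being careful that $\ptpd(G;S) \ge 1$ holds even when $S$ already dominates (so that $\thpdx(G;S) \ge |S|$ rather than $0$), which is exactly the convention emphasized in the footnote after the definition of power propagation time.
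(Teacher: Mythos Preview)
Your proof is correct and is exactly the unpacking of definitions the paper has in mind; the paper states this as an Observation with no proof, so your argument simply makes explicit what is implicit there.
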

\begin{obs}\label{o:basic-bds-up} For every connected graph $G$, $\thpdx(G)\ge \pd(G)$ because  $|S|\ge \pd(G)$ in order to have finite propagation time and $\ptpd(G;S)\ge 1$.

\end{obs}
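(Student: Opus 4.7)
The plan is to unpack the definition $\thpdx(G)=\min_{S\subseteq V(G)}|S|\cdot\ptpd(G;S)$ and observe that the minimum is finite, since $S=V(G)$ gives $|V(G)|\cdot\ptpd(G;V(G))=|V(G)|$ (using the convention, recorded in the footnote on $\ptpd$, that $\ptpd(G;V(G))=1$). Consequently, any set $S$ attaining the minimum must satisfy $\ptpd(G;S)<\infty$, and by definition this forces $S$ to be a power dominating set of $G$. So the minimum ranges effectively over power dominating sets only, and it suffices to bound $|S|\cdot\ptpd(G;S)$ from below for such $S$.

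From there I would combine two elementary bounds. Since $S$ is a power dominating set, the definition of the power domination number immediately yields $|S|\ge\pd(G)$. And the convention emphasized in the introduction, that step (1) of power domination is always performed, guarantees $\ptpd(G;S)\ge 1$ for every $S$. Multiplying these inequalities gives
\[|S|\cdot\ptpd(G;S)\ge\pd(G)\cdot 1=\pd(G),\]
and minimizing over $S$ yields $\thpdx(G)\ge\pd(G)$.

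There is essentially no obstacle; the result is a direct consequence of the definitions together with the convention $\ptpd(G;S)\ge 1$, which is why it is presented as an observation rather than a theorem. The only point worth flagging explicitly is that the minimum defining $\thpdx(G)$ is achieved only by power dominating sets (all other $S$ give infinite value), and hence the inequality need only be verified for such $S$; connectedness of $G$ plays no essential role beyond ensuring $\pd(G)$ is the standard finite quantity being compared against.
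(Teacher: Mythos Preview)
Your proposal is correct and follows exactly the reasoning the paper embeds in the observation itself: any $S$ with finite propagation time is a power dominating set, hence $|S|\ge\pd(G)$, and the convention $\ptpd(G;S)\ge 1$ then gives $|S|\ptpd(G;S)\ge\pd(G)$. You have simply spelled out in full what the paper states in a single clause.
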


\begin{obs}\label{o:basic-bds} For every connected graph $G$:
\ben[$(1)$]
\item $\thpdx(G)\le \gamma(G)$, since
a minimum dominating set is a power dominating set with power propagation
time $1$.
\item $\thpdx(G)\le \pd(G)\ptpd(G)$, realized by a minimum power dominating set $S$ such that $\ppt(G;S)=\ppt(G)$.  
\een
\end{obs}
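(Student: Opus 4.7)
The proof is a direct verification from the definitions, so the plan is brief and the only ``obstacle'' is being careful about how the power propagation time is defined (recall that step \eqref{domstep} counts as round $1$, so $\ptpd(G;S) \ge 1$ even when $S = V(G)$).

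For part $(1)$, I would start by letting $S$ be a minimum dominating set of $G$, so that $|S| = \gamma(G)$ and $N[S] = V(G)$. By definition, $P^{[1]}(S) = N[S] = V(G)$, so $S$ is a power dominating set and $\ptpd(G;S) = 1$. Therefore
\[
\thpdx(G) \;\le\; \thpdx(G;S) \;=\; |S|\,\ptpd(G;S) \;=\; \gamma(G)\cdot 1 \;=\; \gamma(G).
\]

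For part $(2)$, I would let $S$ be a minimum power dominating set chosen so that $\ptpd(G;S) = \ptpd(G)$; such a set exists by the definition $\ptpd(G) = \ptpd(G,\pd(G)) = \min_{|S|=\pd(G)} \ptpd(G;S)$. Then $|S| = \pd(G)$, and applying the definition of $\thpdx$ directly yields
\[
\thpdx(G) \;\le\; \thpdx(G;S) \;=\; |S|\,\ptpd(G;S) \;=\; \pd(G)\,\ptpd(G).
\]

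Both inequalities follow purely from unpacking definitions, so there is no real obstacle; the only subtle point worth flagging in the writeup is that the convention $\ptpd(G;S) \ge 1$ (noted in the footnote after the definition of $\ptpd$) is exactly what makes the bound in $(1)$ tight rather than $0$.
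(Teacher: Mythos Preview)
Your proof is correct and matches the paper's approach exactly: the paper presents this as an observation with the justification given inline in the statement itself, and your writeup simply unpacks those same definitions explicitly. There is nothing to add.
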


The domination number upper bound in the previous observation is explored further throughout the rest of the paper. 
Next we give an example showing that the product power   throttling number need not be the minimum of the two upper bounds $ \gamma(G)$ and $ \pd(G)\ptpd(G)$.  
The {\em spider} $S(\ell_1,\dots,\ell_k)$ has one vertex of degree $k$ and $k$ pendent paths on $\ell_1,\dots,\ell_k$ vertices, respectively.  Figure \ref{f:s722222} shows $S(7,2,2,2,2,2)$.

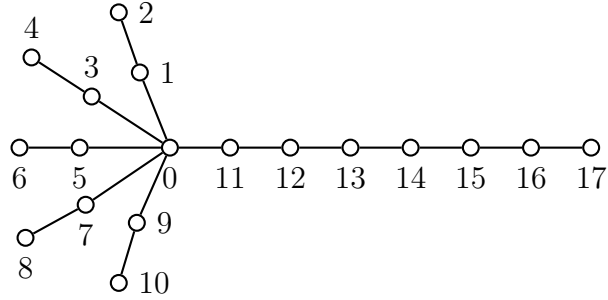
\begin{figure}[!h]
\centering
\begin{tikzpicture}[scale=.8]
\vertex (A) at (0.5,0)[label=below: $0$] {};
\vertex (B) at (1.5,0) [label=below:$11$]{};
\vertex (C) at (2.5,0) [label=below:$12$]{};
\vertex (D) at (3.5,0)[label=below:$13$]{};
\vertex (E) at (4.5,0)[label=below:$14$]{};
\vertex (F) at (5.5,0)[label=below:$15$]{};
\vertex (G) at (6.5,0)[label=below:$16$]{};
\vertex (H) at (7.5,0)[label=below:$17$] {};
\vertex (I) at (-1,0)[label=below:$5$]{};
\vertex (J) at (-2,0)[label=below:$6$]{};
\vertex (K) at (-0.8,0.85)[label=above:$3$]{};
\vertex (L) at (-1.8,1.5)[label=above:$4$]{};
\vertex (M) at (-.0,1.25)[label=right:$1$]{};
\vertex (N) at (-.35,2.25)[label=right:$2$]{};
\vertex (O) at (-0.9,-0.95)[label=below:$7$]{};
\vertex (P) at (-1.9,-1.5)[label=below:$8$]{};
\vertex (Q) at (-0.05,-1.25)[label=right:$9$]{};
\vertex (R) at (-.35,-2.25)[label=right:$10$]{};
\draw[thick] (A) to (B);
\draw[thick] (B) to (C);
\draw[thick] (C) to (D);
\draw[thick] (D) to (E);
\draw[thick] (E) to (F);
\draw[thick] (F) to (G);
\draw[thick] (G) to (H);
\draw[thick] (A) to (I);
\draw[thick] (I) to (J);
\draw[thick] (A) to (K);
\draw[thick] (K) to (L);
\draw[thick] (A) to (M);
\draw[thick] (M) to (N);
\draw[thick] (A) to (O);
\draw[thick] (O) to (P);
\draw[thick] (A) to (Q);
\draw[thick] (Q) to (R);
\end{tikzpicture}\\

\caption{The spider $S(7,2,2,2,2,2)$.  \label{f:s722222}}
\end{figure}
\begin{ex}\label{e:spider} Let $G=S(7,2,2,2,2,2)$ with the vertices numbered as in Figure \ref{f:s722222}. The product power   throttling number of $G$ is $4$ using the power dominating set $\{0,15\}$.  This cannot be realized by either a minimum dominating set (since $\gamma(G)=8$) or  a minimum power dominating set (since 
$\pd(G)=1$, $\ptpd(G)=7$, and $\thpdx(G,1)=7$).   %
\end{ex}

From Observation \ref{o:basic-bds}, only subsets $S\subseteq V(G)$ such that  $\pd(G)\le |S|\le \gamma(G)$ need be considered to determine $\thpdx(G)$. %

Next we turn our attention to lower bounds. The maximum degree of a graph $G$ is denoted by $\Delta(G)$.

\begin{thm}\label{PDlowerbd} {\rm \cite{FHKY17}} %
In a connected graph $G$, %
\[ \pd(G)\ge  \frac {|V(G)|} {\ptpd(G)\Delta(G)+1}.\]
 \end{thm}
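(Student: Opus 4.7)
The plan is to carry out a standard double-counting of the vertices by the round in which they are observed, using Remark~\ref{assign} to uniformly bound the size of each round after the first.

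First I would pick a specific minimum power dominating set $S$ with $|S|=\pd(G)$ and $\ptpd(G;S)=\ptpd(G)$; such a set exists because $\ptpd(G)=\ptpd(G,\pd(G))$ is defined as a minimum over sets of size $\pd(G)$. Set $t=\ptpd(G)$. Since $S$ is a power dominating set, the sets $P^{(0)}(S),P^{(1)}(S),\dots,P^{(t)}(S)$ partition $V(G)$, so
\[
|V(G)|=\sum_{i=0}^{t}|P^{(i)}(S)|.
\]

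Next I would bound each term. We have $|P^{(0)}(S)|=|S|=\pd(G)$. For the first round, $P^{(1)}(S)=N[S]\setminus S$, and each vertex of $S$ has at most $\Delta(G)$ neighbors outside $S$, which gives $|P^{(1)}(S)|\le \pd(G)\Delta(G)$. Now I would invoke Remark~\ref{assign}, which says $|P^{(i+1)}(S)|\le |P^{(1)}(S)|$ for every $i\ge 0$, so for every round $i$ with $1\le i\le t$ we get $|P^{(i)}(S)|\le \pd(G)\Delta(G)$.

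Putting these bounds together yields
\[
|V(G)|\le \pd(G)+t\cdot \pd(G)\Delta(G)=\pd(G)\bigl(\ptpd(G)\Delta(G)+1\bigr),
\]
and rearranging gives the claimed inequality. There is no real obstacle here; the only care needed is to use the correct upper estimate $|P^{(1)}(S)|\le \pd(G)\Delta(G)$ (rather than the looser $|S|(\Delta(G)+1)$) and to recall that Remark~\ref{assign} applies to every round after the first, so the same bound dominates all $t$ propagation rounds.
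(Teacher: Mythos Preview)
Your argument is correct and follows essentially the same route as the proof sketched in the paper (attributed to \cite{FHKY17}): partition $V(G)$ by rounds, bound $|P^{(1)}(S)|\le |S|\Delta(G)$, and propagate this bound to later rounds via Remark~\ref{assign}. The only minor difference is that the paper phrases the count for an arbitrary power dominating set $S$, obtaining the more general inequality $|S|\ge |V(G)|/(\ptpd(G;S)\Delta(G)+1)$ before specializing; your computation already establishes this general form verbatim once you replace $\pd(G)$ by $|S|$ and $\ptpd(G)$ by $\ptpd(G;S)$.
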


The argument used to establish  Theorem \ref{PDlowerbd}  in \cite{FHKY17} consists of showing that for any power dominating set $S$ of $G$,

\beq \label{Sbd} |S| \ge  \frac {|V(G)|} {\ptpd(G;S)\Delta(G)+1}.\eeq

Notice that in the particular case when $S$ is a minimum power dominating set of minimum power propagation time, $|S|=\pd(G)$, $\ptpd(G;S)=\ptpd(G)$ and inequality \eqref{Sbd} gives the bound in Theorem \ref{PDlowerbd}. As we show next, in the study of throttling, inequality \eqref{Sbd} has additional consequences. The next result is immediate since $\frac {|V(G)|} {\ptpd(G;S)\Delta(G)+1}\ge \frac {|V(G)|} {\ptpd(G;S)(\Delta(G)+1)}$.

 \begin{cor}\label{t:lowerbd} In a connected graph $G$, %
\[ \thpdx(G) \ge \lc \frac {|V(G)|} {\Delta(G)+1}\rc.\]
 \end{cor}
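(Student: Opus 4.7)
The plan is to derive this corollary directly from inequality \eqref{Sbd}, which the authors have already invoked; the only additional ingredient is the trivial fact that $\ptpd(G;S)\ge 1$, which allows the denominator in \eqref{Sbd} to be replaced by the cleaner expression $\ptpd(G;S)(\Delta(G)+1)$.

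Concretely, fix any power dominating set $S$ of $G$. From \eqref{Sbd} we have
\[|S|\,\bigl(\ptpd(G;S)\Delta(G)+1\bigr)\ge |V(G)|.\]
Because $\ptpd(G;S)\ge 1$, it follows that
\[\ptpd(G;S)\Delta(G)+1\le \ptpd(G;S)\Delta(G)+\ptpd(G;S)=\ptpd(G;S)(\Delta(G)+1),\]
and hence $|S|\,\ptpd(G;S)(\Delta(G)+1)\ge |V(G)|$. Rearranging gives
\[|S|\,\ptpd(G;S)\ge \frac{|V(G)|}{\Delta(G)+1}.\]

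Since the left-hand side is a positive integer, this lower bound can be strengthened to the ceiling, yielding $\thpdx(G;S)=|S|\,\ptpd(G;S)\ge \lc\frac{|V(G)|}{\Delta(G)+1}\rc$. Taking the minimum over all power dominating sets $S\subseteq V(G)$ (subsets with $\ptpd(G;S)=\infty$ only make the quantity larger) gives the claimed bound on $\thpdx(G)$. There is no real obstacle here: the only nontrivial input is \eqref{Sbd} itself, which was proved in \cite{FHKY17}, and the rest is an arithmetic manipulation together with the observation that product throttling values are integers.
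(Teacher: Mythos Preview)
Your proof is correct and follows exactly the paper's approach: the paper states the result as ``immediate since $\frac{|V(G)|}{\ptpd(G;S)\Delta(G)+1}\ge \frac{|V(G)|}{\ptpd(G;S)(\Delta(G)+1)}$,'' which is precisely the manipulation you carry out, using $\ptpd(G;S)\ge 1$ to replace the $+1$ in the denominator by $+\ptpd(G;S)$. Your write-up is simply a more explicit version of the paper's one-line justification, including the routine ceiling step.
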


\section{Conditions resulting in  $\thpdx(G)= \gamma(G)$}\label{s:pdpt-d}\label{s:thdpx=gamma} %

In this section we present conditions sufficient to ensure $\thpdx(G)=  \gamma(G)$.  The next result follows from Corollary \ref{t:lowerbd}.

 \begin{obs}\label{c:squeeze} Let $G$ be a connected graph of order $n$ with $\gamma(G)=\lc\frac n {\Delta(G)+1}\rc$.  Then $\thpdx(G)=\lc\frac n {\Delta(G)+1}\rc$.
\end{obs}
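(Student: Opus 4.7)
The plan is to sandwich $\thpdx(G)$ between two quantities that both equal $\lceil n/(\Delta(G)+1)\rceil$ under the hypothesis, so the name ``squeeze'' in the observation label is descriptive. First I would invoke Corollary \ref{t:lowerbd} to get the lower bound
\[
\thpdx(G) \ge \lc \frac{n}{\Delta(G)+1} \rc.
\]
Then I would appeal to Observation \ref{o:basic-bds}(1), which gives the upper bound $\thpdx(G)\le \gamma(G)$, coming from the fact that a minimum dominating set is a power dominating set of propagation time $1$. Combining these with the hypothesis $\gamma(G)=\lceil n/(\Delta(G)+1)\rceil$ yields
\[
\lc \frac{n}{\Delta(G)+1} \rc \le \thpdx(G) \le \gamma(G) = \lc \frac{n}{\Delta(G)+1} \rc,
\]
forcing equality throughout.

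There is no serious obstacle here; both ingredients have already been established in the preceding section, and the hypothesis is precisely the condition needed to collapse the lower and upper bounds. The only thing worth noting explicitly is that $\gamma(G)\ge \lceil n/(\Delta(G)+1)\rceil$ always holds for a connected graph (since a single vertex dominates at most $\Delta(G)+1$ vertices), so the hypothesis is really asking that this trivial domination lower bound be tight. Consequently the statement can be read as: whenever the trivial ``degree'' lower bound on $\gamma(G)$ is attained, product power throttling cannot improve on domination.
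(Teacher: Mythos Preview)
Your proof is correct and matches the paper's approach exactly: the paper states that the observation ``follows from Corollary \ref{t:lowerbd}'', with the upper bound $\thpdx(G)\le\gamma(G)$ from Observation \ref{o:basic-bds}(1) understood. Your additional commentary on the hypothesis being the tightness of the trivial degree lower bound for $\gamma(G)$ is a nice gloss but not needed for the argument itself.
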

\begin{obs}\label{p:pathcycle} Since $\gamma(P_n)= \lc \frac n 3\rc$ and $\Delta(P_n)=2$, $\thpdx(P_n)=\lc \frac n 3\rc$. Similarly, since $\gamma(C_n)= \lc \frac n 3\rc$ and $\Delta(C_n)=2$, $\thpdx(C_n)=\lc \frac n 3\rc$.
\end{obs}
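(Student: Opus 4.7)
The plan is to invoke Observation~\ref{c:squeeze} directly, since its hypothesis is precisely what is being asserted parenthetically in the statement. For a path $P_n$ with $n\ge 3$, one has $\Delta(P_n)=2$, so the universal lower bound $\lc \frac{n}{\Delta(P_n)+1}\rc$ from Corollary~\ref{t:lowerbd} equals $\lc \frac n 3\rc$. The classical fact $\gamma(P_n)=\lc \frac n 3\rc$ (which can be seen by picking every third vertex along the path) then shows $\gamma(P_n)$ meets this lower bound, so Observation~\ref{c:squeeze} forces $\thpdx(P_n)=\lc \frac n 3\rc$. The cycle case is identical: $\Delta(C_n)=2$ and $\gamma(C_n)=\lc \frac n 3\rc$, so Observation~\ref{c:squeeze} again applies.

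The only care needed is with very small cases. For $n\in\{1,2\}$ there is no cycle $C_n$, and for $P_1$ and $P_2$ we have $\Delta(P_n)<2$, so the substitution argument above is not literally valid; however, $\gamma(P_1)=\gamma(P_2)=1$ gives $\thpdx(P_n)=1=\lc \frac n 3\rc$ by Observation~\ref{completethpdx}. Beyond these trivial cases, there is no real obstacle: the upper bound $\thpdx(G)\le\gamma(G)$ from Observation~\ref{o:basic-bds} and the lower bound $\thpdx(G)\ge\lc \frac n {\Delta(G)+1}\rc$ from Corollary~\ref{t:lowerbd} squeeze $\thpdx$ between two equal quantities whenever $\gamma$ is pinned to the degree-based lower bound, and paths and cycles are textbook instances of this phenomenon.
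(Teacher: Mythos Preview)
Your proof is correct and follows exactly the paper's approach: the observation is presented immediately after Observation~\ref{c:squeeze} precisely because it is a direct application of that squeeze, using the well-known values $\gamma(P_n)=\gamma(C_n)=\lc n/3\rc$ and $\Delta=2$. Your extra care with the cases $n\in\{1,2\}$ (where $\Delta(P_n)<2$) is a nice touch that the paper glosses over.
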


A $d$-{\em star cover}  of a graph  $G$ is a set of subgraphs $G_i=K_{1,p_i}, i=1,\dots,d$ such that $\cup_{i=1}^dV(G_i)=V(G)$. A star cover is {\em disjoint} if the vertex sets of the stars are disjoint.   For any graph $G$, any dominating set gives a star cover (which can be chosen disjoint), and  $\gamma(G)$ is the minimum $d$ such that $G$ has a $d$-star cover.  

\begin{obs}\label{p:d-tight}{\rm \cite[p. 50]{dom-book}}  A graph $G$ of order $n$ has $\gamma(G)=\frac n {\Delta(G)+1}$ if and only if $G$ has a $\frac n {\Delta(G)+1}$-star cover that is disjoint and in which each star has order  $\Delta(G)+1$.  %
\end{obs}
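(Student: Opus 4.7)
The plan is to prove both directions by carefully exploiting the interaction between a minimum dominating set and the standard counting bound $\gamma(G) \geq n/(\Delta(G)+1)$.

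For the forward implication, I would assume $\gamma(G) = \frac{n}{\Delta(G)+1}$ and fix a minimum dominating set $D$. Since $D$ dominates $V(G)$, the closed neighborhoods $\{N[v] : v \in D\}$ cover all $n$ vertices, giving
\[
n = |V(G)| \le \sum_{v \in D} |N[v]| \le |D|(\Delta(G)+1) = n.
\]
Because both inequalities are tight, two things must hold simultaneously: every $v \in D$ satisfies $|N[v]| = \Delta(G)+1$ (equivalently $\deg(v) = \Delta(G)$), and the closed neighborhoods $N[v]$ for $v \in D$ are pairwise disjoint. Assigning to each $v \in D$ the star with center $v$ and leaves $N(v)$ then produces a disjoint star cover of size $|D| = \frac{n}{\Delta(G)+1}$ in which every star has order exactly $\Delta(G)+1$.

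For the reverse implication, I would start from a disjoint star cover $\{K_{1,\Delta(G)} : i=1,\dots,\frac{n}{\Delta(G)+1}\}$ of $G$ with each star of order $\Delta(G)+1$. The set $C$ of star centers dominates $V(G)$ because every vertex lies in some star and is therefore adjacent to (or equal to) its center, so $\gamma(G) \le |C| = \frac{n}{\Delta(G)+1}$. The matching lower bound $\gamma(G) \ge \frac{n}{\Delta(G)+1}$ is the standard observation that any dominating set $D'$ satisfies $n \le \sum_{v \in D'}|N[v]| \le |D'|(\Delta(G)+1)$. Combining the two bounds yields the claimed equality.

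The argument is essentially a tightness analysis of the classical bound $\gamma(G) \ge n/(\Delta(G)+1)$, so there is no real obstacle; the only point that needs care is to extract both disjointness of closed neighborhoods \emph{and} maximality of the centers' degrees from the equality case, rather than just one of them. Since the stars in the cover need only have the correct order (not be induced subgraphs), no further bookkeeping is required.
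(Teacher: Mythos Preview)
Your argument is correct and is precisely the standard tightness analysis of the bound $\gamma(G)\ge n/(\Delta(G)+1)$. Note, however, that the paper does not supply its own proof of this statement: it is recorded as an observation with a citation to \cite[p.~50]{dom-book}, so there is nothing in the paper to compare your approach against. Your write-up would serve perfectly well as the omitted justification.
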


\begin{obs}\label{c:d-tight} If a connected graph $G$ has a star cover consisting of $d$ disjoint copies of $K_{1,d}$ and $\Delta(G)=d$, then $\thpdx(G)=d=\gamma(G)$.  %
\end{obs}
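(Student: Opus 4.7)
The plan is to deduce this directly from the domination‐equality characterization in Observation \ref{p:d-tight} together with the squeeze argument in Observation \ref{c:squeeze}.

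First I would use the hypothesis to pin down $|V(G)|$. Since the star cover consists of $d$ pairwise disjoint copies of $K_{1,d}$, each of which has $d+1$ vertices, and since together these stars cover $V(G)$, we must have $n:=|V(G)|=d(d+1)$. Combined with the assumption $\Delta(G)=d$, this yields
\[
\frac{n}{\Delta(G)+1}=\frac{d(d+1)}{d+1}=d.
\]

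Next I would invoke Observation \ref{p:d-tight}: having a disjoint star cover by $\frac{n}{\Delta(G)+1}$ stars each of order $\Delta(G)+1$ is exactly the condition for $\gamma(G)=\frac{n}{\Delta(G)+1}$, so $\gamma(G)=d$.

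Finally, Observation \ref{c:squeeze} applies verbatim: a connected graph $G$ with $\gamma(G)=\lceil n/(\Delta(G)+1)\rceil$ satisfies $\thpdx(G)=\lceil n/(\Delta(G)+1)\rceil$, so $\thpdx(G)=d=\gamma(G)$. There is essentially no obstacle here; the whole content is the bookkeeping that the disjoint $K_{1,d}$‐cover forces $n=d(d+1)$, after which the lower bound from Corollary \ref{t:lowerbd} meets the upper bound $\thpdx(G)\le\gamma(G)$ from Observation \ref{o:basic-bds}(1).
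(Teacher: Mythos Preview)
Your argument is correct and is exactly the reasoning the paper has in mind: the observation is stated without proof precisely because it is immediate from Observation~\ref{p:d-tight} (which gives $\gamma(G)=d$) together with Observation~\ref{c:squeeze} (which then forces $\thpdx(G)=d$).
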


One can construct a graph $G$ of order $d(d+1)$ with $\Delta(G)=d=\gamma(G)=\frac {d(d+1)}{d+1}$ as described in the next example.

\begin{ex}\label{e:dstar} Define the graph $G_d$ to be the graph obtained from $d$ disjoint copies of $K_{1,d}$ by adding all necessary edges so that each leaf of a $K_{1,d}$ is adjacent to the corresponding leaves of the other $d-1$ copies of $K_{1,d}$.  Then $G_d$ is a $d$-regular graph of order $d(d+1)$ with $\gamma(G)=d$; $G_3$ is shown in Figure \ref{f:K13boxK3}.  
\end{ex}

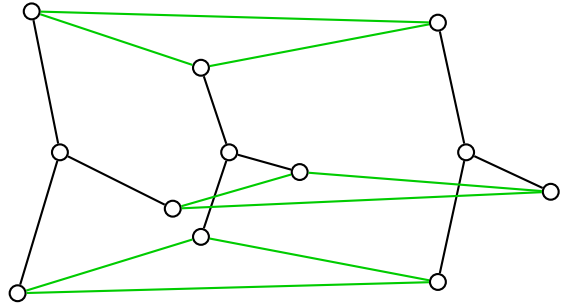
\begin{figure}[!h]
\centering
\begin{tikzpicture}[scale=.75,rotate=90]
\vertex (A) at (0,0){};
\vertex (B) at (-.35,-1.25) {};
\vertex (C) at (1.5,.5){};
\vertex (D) at (-1.5,.5){};
\vertex (E) at (2.5,3.5){};
\vertex (F) at (2.3,-3.7){};
\vertex (G) at (-2.5,3.75) {};
\vertex (H) at (-2.3,-3.7){};
\vertex (I) at (0.0,-4.2){};
\vertex (J) at (-.7,-5.7){};
\vertex (K) at (0,3){};
\vertex (L) at (-1,1){};

\draw[thick,] (A) to (B);
\draw[thick] (A) to (C);
\draw[thick] (A) to (D);
\draw[thick,green] (C) to (E);
\draw[thick,green] (C) to (F);
\draw[thick,green] (D) to (G);
\draw[thick,green] (D) to (H);
\draw[thick,green] (E) to (F);
\draw[thick,green] (G) to (H);
\draw[thick] (I) to (F);
\draw[thick] (I) to (H);
\draw[thick] (I) to (J);
\draw[thick,green] (B) to (J);
\draw[thick,green] (J) to (L);
\draw[thick,green] (B) to (L);
\draw[thick] (G) to (K);
\draw[thick] (K) to (E);
\draw[thick] (K) to (L);
\end{tikzpicture}\\

\caption{The graph $G_3$, with the edges added to $3K_{1,3}$  shown in {\gre \bf green}. \label{f:K13boxK3}}%
\end{figure}

The construction in Example \ref{e:dstar} can be relaxed by adding edges between the degree one vertices of the stars such that  no such vertex is incident with more than $d-1$ additional edges and at least $d-1$ additional edges are added to connect the graph. 
When $\frac n {\Delta(G)+1}$ is not an integer and $\gamma(G)=\lc\frac n {\Delta(G)+1}\rc$, by Observation \ref{c:squeeze} $\thpdx(G)=\gamma(G)$.  Although $G$ will have a $\gamma(G)$-star cover, it is possible that none of the stars will have
 order $\Delta(G) + 1$, as  Example \ref{e:d-tight} shows.
\begin{ex}\label{e:d-tight} Let $H$ be the graph shown in Figure \ref{f:weird}.  Then $\Delta(H)=4$,  $\{x,y,z\}$ is the unique minimum dominating set and $H$ has only one 3-star cover, in which each star is $K_{1,3}=K_{1,\Delta(H)-1}$.
\end{ex}

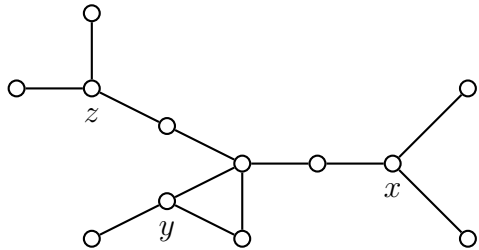
\begin{figure}[h!]
\centering
\begin{tikzpicture}[scale=1]
\vertex (A) at (0,0){};
\vertex (B) at (1,0) {};
\vertex (C) at (2,0)[label=below:$x$]{};
\vertex (D) at (-1,-0.5)[label=below:$y$]{};
\vertex (E) at (-2,-1){};
\vertex (F) at (0,-1) {};
\vertex (G) at (3,1) {};
\vertex (H) at (3,-1){};
\vertex (I) at (-1,0.5){};
\vertex (J) at (-2,1)[label=below:$z$]{};
\vertex (K) at (-2,2){};
\vertex (L) at (-3,1){};
\draw[thick] (A) to (B);
\draw[thick] (B) to (C);
\draw[thick] (A) to (D);
\draw[thick] (D) to (E);
\draw[thick] (C) to (G);
\draw[thick] (C) to (H);
\draw[thick] (A) to (F);
\draw[thick] (F) to (D);
\draw[thick] (A) to (I);
\draw[thick] (I) to (J);
\draw[thick] (J) to (K);
\draw[thick] (J) to (L);
\end{tikzpicture}\\
\caption{The graph $H$ in Example \ref{e:d-tight}.  \label{f:weird}}
\end{figure}

\begin{prop}\label{basic} In any graph $G$, $\pd(G)=\gamma(G)$ if and only if $\ptpd(G)=1$. Moreover, in this case  $\thpdx(G)=\gamma(G)$.\end{prop}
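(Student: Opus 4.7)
The plan is to prove the biconditional by handling each direction using the basic definitions, then derive the \textbf{moreover} clause from the two-sided bound $\pd(G)\le \thpdx(G)\le \gamma(G)$ coming from Observations \ref{o:basic-bds-up} and \ref{o:basic-bds}.

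For the forward direction, assume $\pd(G)=\gamma(G)$. I would take a minimum dominating set $D$, note that $N[D]=V(G)$ so that $P^{[1]}(D)=V(G)$, and conclude $\ptpd(G;D)=1$. Since $|D|=\gamma(G)=\pd(G)$, the set $D$ is a minimum power dominating set realizing propagation time $1$; combined with the convention $\ptpd(G;S)\ge 1$ for every $S$ stated in the introduction, this gives $\ptpd(G)=1$.

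For the reverse direction, assume $\ptpd(G)=1$. Then there exists a set $S$ with $|S|=\pd(G)$ and $\ptpd(G;S)=1$, so $P^{[1]}(S)=N[S]=V(G)$. Thus $S$ is a dominating set, whence $\gamma(G)\le |S|=\pd(G)$. Combined with the always-true inequality $\pd(G)\le \gamma(G)$ (every dominating set is a power dominating set), this yields equality.

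For the final assertion, under the hypothesis $\pd(G)=\gamma(G)$ (equivalently $\ptpd(G)=1$), Observation \ref{o:basic-bds-up} gives $\thpdx(G)\ge \pd(G)$ and Observation \ref{o:basic-bds}(1) gives $\thpdx(G)\le \gamma(G)$, so the chain $\pd(G)\le \thpdx(G)\le \gamma(G)=\pd(G)$ collapses to $\thpdx(G)=\gamma(G)$. There is essentially no obstacle here; the entire argument is a direct unwinding of the definitions and the two elementary bounds already established, so the main care needed is simply to invoke the convention $\ptpd(G;S)\ge 1$ correctly in the forward direction.
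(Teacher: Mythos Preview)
Your proof is correct and follows essentially the same approach as the paper's: both directions are handled by the same definitional unwinding (a minimum dominating set witnesses $\ptpd(G)=1$ in one direction; a minimum power dominating set with propagation time~$1$ is a dominating set in the other), and the \emph{moreover} clause is derived from Observations~\ref{o:basic-bds-up} and~\ref{o:basic-bds} exactly as in the paper. Your write-up is slightly more explicit about the convention $\ptpd(G;S)\ge 1$, but otherwise the arguments coincide.
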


\bpf Suppose $\pd(G)=\gamma(G)$.  Then  $\ptpd(G)=1$ because any minimum dominating set $S$ of $G$ is also a minimum power dominating set of $G$ with $\ptpd(G;S) =1$.   Conversely, if  $\ptpd(G)=1$, then there exists a  minimum power dominating set $S$ of $G$ such that $\ptpd(G;S)=1$, which implies $S$ is a dominating set and thus $\pd(G)\le \gamma (G)\le |S|=\pd (G)$. The last statement follows from Observations \ref{o:basic-bds-up} and \ref{o:basic-bds}.
\epf

\begin{prop}\label{r:comp-bd-thpdx} Let $G$ be a connected graph.  %
\ben[(1)]
\item\label{r:comp-bd-thpdx-1}  Suppose $\gamma (G)\le b$. If $\pd(G)\geq {{b}\over 2}$, then $\thpdx(G)=\gamma(G)$. In particular, if $\pd(G)\ge \frac{\gamma(G)}2$, then $\thpdx(G)=\gamma(G)$.
\item\label{r:comp-bd-thpdx-2} Suppose $\thpdx(G;S)=  b< \gamma(G)$ for some $S\subset V(G)$.
Then $\thpdx(G)=\thpdx(G,k)$ for some $k$ such that  $\pd(G)\le k\le \lf\frac{b}2\rf$.
\een
\end{prop}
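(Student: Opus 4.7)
The plan is to prove both parts by a simple dichotomy on the power propagation time of a candidate set $S$, using the fact that $\ptpd(G;S)=1$ is equivalent to $S$ being a dominating set (which forces $|S|\ge\gamma(G)$).

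For part (1), I would start from the upper bound $\thpdx(G)\le \gamma(G)$ given by Observation \ref{o:basic-bds}(1), then establish the matching lower bound. Fix any $S\subseteq V(G)$ with finite power propagation time and split on $\ptpd(G;S)$. If $\ptpd(G;S)=1$, then $S$ dominates $G$, so $\thpdx(G;S)=|S|\ge \gamma(G)$. Otherwise $\ptpd(G;S)\ge 2$, and combining $|S|\ge \pd(G)$ with the hypothesis $\pd(G)\ge b/2$ gives
\[\thpdx(G;S)=|S|\,\ptpd(G;S)\ge 2\,\pd(G)\ge b\ge \gamma(G).\]
Taking the minimum over $S$ yields $\thpdx(G)\ge \gamma(G)$. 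The ``in particular'' statement is then just the case $b=\gamma(G)$.

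For part (2), the hypothesis gives $\thpdx(G)\le b<\gamma(G)$. Let $S^*$ be a set realizing $\thpdx(G)$ and set $k=|S^*|$. Since any dominating set $T$ satisfies $\thpdx(G;T)=|T|\ge \gamma(G)>b\ge \thpdx(G)$, the set $S^*$ cannot be a dominating set, so $\ptpd(G;S^*)\ge 2$. Consequently
\[2k\le k\cdot \ptpd(G;S^*)=\thpdx(G)\le b,\]
which gives $k\le \lfloor b/2\rfloor$; the lower bound $k\ge \pd(G)$ is immediate from Observation \ref{o:basic-bds-up}. Finally, $\thpdx(G,k)\le \thpdx(G;S^*)=\thpdx(G)\le \thpdx(G,k)$, so $\thpdx(G)=\thpdx(G,k)$ as required.

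The argument is entirely routine; the only point that requires care is isolating the $\ptpd(G;S)=1$ case, which is where the gap between $\pd(G)$ and $\gamma(G)$ is bridged and where the factor of $2$ in the hypotheses originates.
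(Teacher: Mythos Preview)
Your proof is correct and follows essentially the same approach as the paper: both arguments hinge on the dichotomy between $\ptpd(G;S)=1$ (where $S$ must dominate, forcing $|S|\ge\gamma(G)$) and $\ptpd(G;S)\ge 2$ (where $\thpdx(G;S)\ge 2|S|\ge 2\pd(G)$). For part~(2) the paper rules out values of $k$ in the range $\lfloor b/2\rfloor < k \le \gamma(G)$, whereas you work directly with an optimal set $S^*$ and bound $|S^*|$; these are minor presentational variants of the same idea.
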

\bpf
Let $S$ be an arbitrary power dominating set of $G$. Then, $|S|\geq \pd(G)$ and $\thpdx(G;S)=|S|\ptpd(G;S)\geq \pd(G)\ptpd(G;S)$. If $S$ is not a dominating set, then $\ptpd(G;S)\geq 2$ and this implies $\thpdx(G;S)\geq 2|S|\ge 2\pd(G)$. 

For \eqref{r:comp-bd-thpdx-1}, if $S$ is  not a  dominating set, then  $\thpdx(G;S)\geq 2\pd(G)\geq b\ge \gamma(G)$ by hypothesis. Therefore, %
$\thpdx(G)=\gamma(G)$.  

For \eqref{r:comp-bd-thpdx-2}, $\thpdx(G,|S|)\le b$ and $|S|\le \frac b 2$ since $b<\gamma(G)$ implies $\ppt(G;S)\ge 2$.   For any graph, $\thpdx(G)=\thpdx(G,k)$ for some $k$ with $\pd(G)\le k\le \gamma(G)$. If $\lf\frac{b}2\rf<k< \gamma(G)$, then $\ppt(G,k)\ge 2$, so $\thpdx(G,k)\ge 2k> b$.    
\epf

Now we apply the first part of the previous result  together with some known upper bounds for the power domination number of a graph in terms of its order and minimum degree to  derive additional sufficient conditions for $\thpdx(G)=\gamma(G)$.

\begin{thm}{\rm \cite[Theorem 2.1]{dom-book}}\label{half}
Let $G$ be a graph of order $n$ having no isolated vertices. Then $\gamma (G) \leq \lf\frac n 2\rf$.
\end{thm}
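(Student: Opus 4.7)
The plan is to exhibit a dominating set of size at most $\lf\frac{n}{2}\rf$ by extracting it from a minimum edge cover of $G$. Since $G$ has no isolated vertex, $E(G)$ itself covers every vertex, so an edge cover exists; fix one $C\subseteq E(G)$ of minimum cardinality, and study the spanning subgraph $H=(V(G),C)$.

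The key structural claim is that every component of $H$ is a star $K_{1,p_i}$ for some $p_i\ge 1$. First, no component of $H$ contains a cycle: deleting any edge $e$ of a cycle still leaves both endpoints of $e$ incident with another edge of the cycle, so $C\setminus\{e\}$ would remain an edge cover, contradicting minimality of $|C|$. Second, since each component of $H$ is now a tree, its diameter must be at most $2$; otherwise a longest path $v_0v_1\cdots v_d$ with $d\ge 3$ would permit removing the interior edge $v_1v_2$, as $v_1$ is still covered by $v_0v_1$ and $v_2$ is still covered by $v_2v_3$, again contradicting minimality. A tree of diameter at most $2$ is a star.

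Once this is in hand, let $k$ be the number of components of $H$, choose $D$ to contain the center of each star (either endpoint in the case $p_i=1$), and observe that $D$ dominates $G$: every leaf is adjacent to its center, and every center lies in $D$. Each star has at least two vertices, so $n=\sum_{i=1}^{k}(p_i+1)\ge 2k$, giving $\gamma(G)\le |D|=k\le \frac{n}{2}$; integrality of $|D|$ then yields $\gamma(G)\le\lf\frac{n}{2}\rf$. The only nontrivial step is the structural claim that every component of a minimum edge cover is a star; once that is secured, the rest is a routine counting argument.
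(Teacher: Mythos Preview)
Your argument is correct. The paper does not actually prove this theorem; it is quoted as \cite[Theorem 2.1]{dom-book} and used as a black box, so there is no ``paper's own proof'' to compare against.

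For context, the proof most commonly associated with this result (due to Ore) is different from yours: one shows that if $D$ is a \emph{minimal} dominating set of a graph with no isolated vertices, then $V(G)\setminus D$ is also a dominating set, whence the smaller of $D$ and $V(G)\setminus D$ has at most $\lf n/2\rf$ vertices. Your route through a minimum edge cover is equally valid and arguably more constructive: it produces an explicit disjoint star cover of $G$, which is exactly the structure the paper exploits elsewhere (see Observation~\ref{p:d-tight}). The Ore argument is a one-liner once the complement-of-minimal-dominating-set lemma is in hand, while your approach makes the $K_{1,p_i}$ decomposition visible; both reach the bound with the same ease.
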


Notice that the condition of $G$ not having isolated vertices can also be stated as $\delta (G)\geq 1$, which is immediate for a connected graph of order at least two.

\begin{cor} \label{cor-n/2} Let $G$ be a connected graph of order $n\ge 2$. If $\pd (G)\geq {n\over 4}$, then  $\thpdx(G)=\gamma(G)$.
\end{cor}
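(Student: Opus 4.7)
The plan is to derive this as an immediate consequence of Theorem \ref{half} together with Proposition \ref{r:comp-bd-thpdx}\eqref{r:comp-bd-thpdx-1}. The idea is that Theorem \ref{half} supplies the upper bound on $\gamma(G)$ needed to invoke Proposition \ref{r:comp-bd-thpdx}\eqref{r:comp-bd-thpdx-1}, and the hypothesis $\pd(G) \geq n/4$ will translate into the ``$\pd(G) \geq b/2$'' hypothesis of that proposition.

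Concretely, I would first set $b = \lf n/2 \rf$. Since $G$ is connected with $n \geq 2$, it has no isolated vertices, so Theorem \ref{half} yields $\gamma(G) \leq \lf n/2 \rf = b$. Next I need to check the inequality $\pd(G) \geq b/2$: because $\lf n/2 \rf \leq n/2$, we have $b/2 \leq n/4 \leq \pd(G)$ by hypothesis. Applying Proposition \ref{r:comp-bd-thpdx}\eqref{r:comp-bd-thpdx-1} then gives $\thpdx(G) = \gamma(G)$.

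There is no real obstacle; the only thing to verify carefully is the inequality $\lf n/2 \rf / 2 \leq n/4$, which holds trivially. The proof is essentially a two-line chain of substitutions into already-established results.
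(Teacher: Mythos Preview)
Your proof is correct and follows exactly the route the paper intends: the corollary is stated without proof immediately after Theorem~\ref{half}, with the understanding that one combines $\gamma(G)\le \lf n/2\rf$ with Proposition~\ref{r:comp-bd-thpdx}\eqref{r:comp-bd-thpdx-1} using $b=\lf n/2\rf$.
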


\begin{thm}{\rm \cite[Theorem 2.3]{dom-book}}\label{2:5}
Let $G$ be a connected graph of order $n$ with minimum degree $\delta (G)\geq 2$. If $G\not\in {\mathcal{A}}$ where ${\mathcal{A}}$ is the set of graphs in Figure \ref{PDSfamilyA}, then $\gamma (G) \leq {{2n}\over 5}$.
\end{thm}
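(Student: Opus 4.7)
The plan is to prove this bound by induction on $n$, using a structural/greedy analysis to construct a dominating set of size at most $\frac{2n}{5}$.  The exceptional family $\mathcal{A}$ will arise precisely as the small base cases and low-degree configurations where the greedy reduction fails to attain the $2/5$ ratio, so I would first enumerate candidate small graphs by hand and let that calculation guide which obstructions must be carved out.

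First I would dispose of the case $\Delta(G) \ge 4$ via a greedy reduction: pick a vertex $v$ with $\deg(v) \ge 4$, include $v$ in the dominating set being built, delete $N[v]$ from $G$, and recurse on the remaining graph.  Since $|N[v]| \ge 5$, this step contributes $1$ to the dominating set for every $5$ vertices removed, giving ratio $1/5 \le 2/5$.  Care is needed because deleting $N[v]$ can create vertices of degree less than $2$ or disconnect the graph; the standard remedy is either to argue on each component separately (after checking it does not collapse into a member of $\mathcal{A}$) or to attach auxiliary edges/virtual vertices that preserve the minimum-degree hypothesis without increasing $\gamma$.  The residual inequality $\gamma(G) \le 1 + \gamma(G - N[v]) \le 1 + \tfrac{2}{5}(n-5) = \tfrac{2n}{5} - 1$ then closes the step.

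The genuinely difficult case is $\Delta(G) \le 3$.  When $\Delta(G) = 2$, the graph is a cycle and $\gamma(C_n) = \lceil n/3 \rceil \le \tfrac{2n}{5}$ except for very small $n$, which determine part of $\mathcal{A}$.  For $\Delta(G) = 3$, the strategy is to locate a pair $\{u,v\}$ (typically at distance at most two) with $|N[u] \cup N[v]| \ge 5$, include both vertices in the dominating set, and recurse on $G - (N[u] \cup N[v])$; the ratio $2/5$ is again preserved.  The heart of the argument is showing that whenever no such pair exists, the local structure of $G$ is so rigid --- essentially every short path has neighborhoods that overlap heavily --- that $G$ must lie in a small finite list, which supplies the remaining members of $\mathcal{A}$.

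The main obstacle is exactly this case analysis for $\Delta(G) = 3$: classifying the graph fragments that block the $2/5$ reduction and verifying that every such obstruction belongs to the explicit family $\mathcal{A}$.  A discharging reformulation --- assign initial charge $\tfrac{2}{5}$ to every vertex and redistribute so that each eventual dominator ends with charge at least $1$ --- gives a cleaner presentation, but the low-degree discharging rules require the same underlying case-by-case verification.  One could alternatively invoke a packing-style argument based on a maximum set of vertices with pairwise-disjoint closed neighborhoods, using minimum degree $2$ to argue that such a packing together with its boundary efficiently covers the graph; either route ultimately converges on the same extremal configurations comprising $\mathcal{A}$.
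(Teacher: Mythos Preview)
The paper does not prove this theorem; it is quoted verbatim from \cite[Theorem~2.3]{dom-book} and used as a black box to derive Corollary~\ref{cor_2:5}.  There is therefore no proof in the paper to compare your sketch against.

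That said, since you have outlined an argument, a brief comment on its soundness: the high-level strategy (reduce on a high-degree vertex or a well-chosen pair, then handle the cubic case by local analysis) is indeed the shape of the original McCuaig--Shepherd proof underlying the cited result.  However, your step for $\Delta(G)\ge 4$ is not self-contained as written.  After deleting $N[v]$ the remaining graph can acquire vertices of degree $0$ or $1$, and the ``standard remedies'' you mention (recursing on components, or attaching auxiliary gadgets) do not work as cleanly as suggested: a component of $G-N[v]$ may itself land in $\mathcal{A}$ or have $\delta<2$, and one then needs additional vertices in the dominating set beyond what the ratio $1/5$ budgets for.  The actual argument does not proceed by this kind of straightforward greedy deletion but rather by a more careful structural decomposition (end-blocks, ears, and a detailed case analysis in the cubic setting).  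Your sketch correctly identifies where the difficulty lies --- the $\Delta(G)=3$ case analysis that produces $\mathcal{A}$ --- but the reduction to that case is itself nontrivial and your current treatment of it has a gap.
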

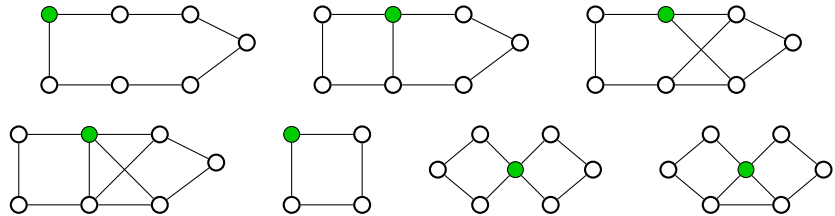
\begin{figure}[h!]
\[
\begin{array}{ccc}
		\begin{tikzpicture}[scale=.75]
	
		\vertex (A) at (0,0) {};
		\vertex (B) at (1.25, 0) {};
		\vertex (C) at (2.5, 0) {};
		\vertex (D) at (3.5,0.75) {};
		\vertex (E) at (2.5, 1.25) {};
		\vertex (F) at (1.25, 1.25) {};
		\gvertex (G) at (0,1.25) {};
		\draw (A) to (B); 
		\draw (B) to (C); 
		\draw (C) to (D); 
		\draw (D) to (E); 
		\draw (E) to (F); 
		\draw (F) to (G); 
		\draw (G) to (A); 
	\end{tikzpicture}
	\hskip 1em

&

\begin{tikzpicture}[scale=.75]
		\vertex (A) at (0,0) {};
		\vertex (B) at (1.25, 0) {};
		\vertex (C) at (2.5, 0) {};
		\vertex (D) at (3.5,0.75) {};
		\vertex (E) at (2.5, 1.25) {};
		\gvertex (F) at (1.25, 1.25) {};
		\vertex (G) at (0,1.25) {};
		\draw (A) to (B); 
		\draw (B) to (C); 
		\draw (C) to (D); 
		\draw (D) to (E); 
		\draw (E) to (F); 
		\draw (F) to (G); 
		\draw (G) to (A); 
		\draw (F) to (B);
	\end{tikzpicture}
	\hskip 1em
&
\begin{tikzpicture}[scale=.75]
		\vertex (A) at (0,0) {};
		\vertex (B) at (1.25, 0) {};
		\vertex (C) at (2.5, 0) {};
		\vertex (D) at (3.5,0.75) {};
		\vertex (E) at (2.5, 1.25) {};
		\gvertex (F) at (1.25, 1.25) {};
		\vertex (G) at (0,1.25) {};
		\draw (A) to (B); 
		\draw (B) to (C); 
		\draw (C) to (D); 
		\draw (D) to (E); 
		\draw (E) to (F); 
		\draw (F) to (G); 
		\draw (G) to (A); 
		\draw (C) to (F);
		\draw (B) to (E);
	\end{tikzpicture}
\end{array}
\]
\[
\begin{array}{cccc}
\begin{tikzpicture}[scale=.75]
		\vertex (A) at (0,0) {};
		\vertex (B) at (1.25, 0) {};
		\vertex (C) at (2.5, 0) {};
		\vertex (D) at (3.5,0.75) {};
		\vertex (E) at (2.5, 1.25) {};
		\gvertex (F) at (1.25, 1.25) {};
		\vertex (G) at (0,1.25) {};
		\draw (A) to (B); 
		\draw (B) to (C); 
		\draw (C) to (D); 
		\draw (D) to (E); 
		\draw (E) to (F); 
		\draw (F) to (G); 
		\draw (G) to (A); 
		\draw (B) to (F);
		\draw (C) to (F);
		\draw (B) to (E);
	\end{tikzpicture}
	\hskip 1em
&

\begin{tikzpicture}[scale=.75]
		\vertex (A) at (0,0){};
		\vertex (B) at (1.25, 0){};
		\vertex (C) at (1.25, 1.25){};
		\gvertex (D) at (0, 1.25){};
		\draw (A) to (B); 
		\draw (B) to (C); 
		\draw (C) to (D); 
		\draw(D) to (A);

	\end{tikzpicture}
\hskip 1em
&
	\begin{tikzpicture}[scale=.75]
		\vertex (A) at (0,0) {};
		\vertex (B) at (1.25, 0) {};
		\vertex (C) at (2, 0.625) {};
		\vertex (D) at (1.25,1.25) {};
		\vertex (E) at (0, 1.25) {};
		\vertex (F) at (-.75, .625) {};
		\gvertex (G) at (0.625,.625) {};
		\draw (B) to (C); 
		\draw (B) to (G); 
		\draw (C) to (D); 
		\draw (D) to (G); 
		\draw (E) to (G); 
		\draw (F) to (E); 
		\draw (G) to (A); 
		\draw (F) to (A);
	\end{tikzpicture}
	\hskip 1em
	&	
	\begin{tikzpicture}[scale=.75]
		\vertex (A) at (0,0) {};
		\vertex (B) at (1.25, 0) {};
		\vertex (C) at (2, 0.625) {};
		\vertex (D) at (1.25,1.25) {};
		\vertex (E) at (0, 1.25) {};
		\vertex (F) at (-.75, .625) {};
		\gvertex (G) at (0.625,.625) {};
		\draw (A) to (B); 
		\draw (B) to (C); 
		\draw (B) to (G); 
		\draw (C) to (D); 
		\draw (D) to (G); 
		\draw (E) to (G); 
		\draw (F) to (E); 
		\draw (G) to (A); 
		\draw (F) to (A);
	\end{tikzpicture}\end{array}
\]

\caption{The graphs in the family ${\mathcal{A}}$. Each graph has a vertex (colored {\gre \bf green}) that is a minimum power dominating set.}\label{PDSfamilyA}
\end{figure}

\begin{cor} \label{cor_2:5} Let $G$ be a connected graph of order $n$ with minimum degree $\delta (G)\geq 2$. If %
$\pd (G)\geq {n\over 5}$, then $\thpdx(G)=\gamma(G)$.
\end{cor}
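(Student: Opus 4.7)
The plan is to reduce Corollary \ref{cor_2:5} to Theorem \ref{2:5} via Proposition \ref{r:comp-bd-thpdx}\eqref{r:comp-bd-thpdx-1}, and then handle the exceptional family $\mathcal{A}$ by direct inspection.

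First I would dispose of the generic case $G\notin\mathcal{A}$. Since $G$ is a connected graph of order $n$ with $\delta(G)\ge 2$ and $G\notin\mathcal{A}$, Theorem \ref{2:5} gives $\gamma(G)\le \frac{2n}{5}$. Set $b=\frac{2n}{5}$. The hypothesis $\pd(G)\ge \frac{n}{5}=\frac{b}{2}$ together with $\gamma(G)\le b$ is exactly the setup of Proposition \ref{r:comp-bd-thpdx}\eqref{r:comp-bd-thpdx-1}, which immediately yields $\thpdx(G)=\gamma(G)$.

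Next I would handle the exceptional case $G\in\mathcal{A}$. From the caption of Figure \ref{PDSfamilyA}, every graph in $\mathcal{A}$ admits a single vertex as a power dominating set, so $\pd(G)=1$ for all $G\in\mathcal{A}$. The hypothesis $\pd(G)\ge \frac{n}{5}$ then forces $n\le 5$. Inspecting the seven graphs in $\mathcal{A}$, six have order $7$ and exactly one, namely $C_4$, has order $\le 5$. So the only $G\in\mathcal{A}$ that can satisfy the hypothesis is $C_4$, and for this graph we already know $\thpdx(C_4)=\gamma(C_4)=2$ from Observation \ref{p:pathcycle} (or via a direct check: any one vertex dominates two of the three remaining vertices and then zero-forces the last in a second round, while Corollary \ref{t:lowerbd} gives $\thpdx(C_4)\ge \lceil 4/3\rceil=2$).

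Combining the two cases completes the argument. There is no real obstacle here; the content of the corollary is essentially the combination of the upper bound $\gamma(G)\le \frac{2n}{5}$ from Theorem \ref{2:5} with the threshold $\pd(G)\ge \gamma(G)/2$ from Proposition \ref{r:comp-bd-thpdx}\eqref{r:comp-bd-thpdx-1}. The only slightly delicate point is recognizing that the exceptional family $\mathcal{A}$ does not cause trouble, and that is settled by the observation $\pd(G)=1$ throughout $\mathcal{A}$, which renders all but one of those graphs vacuously irrelevant under the hypothesis $\pd(G)\ge n/5$.
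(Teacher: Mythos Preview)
Your proposal is correct and follows essentially the same route as the paper: apply Theorem \ref{2:5} together with Proposition \ref{r:comp-bd-thpdx}\eqref{r:comp-bd-thpdx-1} for $G\notin\mathcal{A}$, then observe that the six order-$7$ graphs in $\mathcal{A}$ violate the hypothesis $\pd(G)\ge n/5$ while $C_4$ satisfies the conclusion directly.
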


\bpf
The graph $C_4$ has  $\thpdx(C_4)=2=\gamma(C_4)$ and thus satisfies the conclusion. %
The remaining graphs $H\in {\mathcal{A}}$ do not satisfy the hypothesis %
since each has order $n=7$ and $\pd(H)=1 <\frac n 5$. %
When $\pd (G)\geq \frac n 5$ (and $G\neq C_4$) we have $2\pd(G) \ge \frac{2n}5 \ge \gamma(G)$ by Theorem \ref{2:5} and then the conclusion follows from Proposition \ref{r:comp-bd-thpdx}\eqref{r:comp-bd-thpdx-1}.
\epf

\begin{thm} {\rm \cite[Theorem 2.7]{dom-book}} \label{3:8}
Let $G$ be a graph of order $n$ with minimum degree $\delta (G)\geq 3$. Then, $\gamma (G) \leq {{3n}\over 8}$.
\end{thm}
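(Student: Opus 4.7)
My plan is to follow the strategy of Reed (1996), the source of Theorem 2.7 in \cite{dom-book}. The proof is organized around a spanning path--cycle cover of $G$: a spanning subgraph $F\subseteq G$ whose connected components are paths and cycles. If such an $F$ has components $C_1,\dots,C_m$ with $|V(C_i)|=n_i$, then a dominating set for each $C_i$ also dominates $V(C_i)$ in $G$ (since edges of $F$ are edges of $G$), so $\gamma(G)\le \sum_{i=1}^m\gamma(C_i)$. Since $\gamma(P_k)=\gamma(C_k)=\lceil k/3\rceil$, it would then suffice to produce an $F$ for which $\lceil n_i/3\rceil \le \tfrac{3n_i}{8}$ holds for every $i$, and sum.

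The hypothesis $\delta(G)\ge 3$ enters in the construction of the cover $F$. I would begin from an arbitrary spanning path--cycle cover (trivially, take each vertex as a singleton path) and repeatedly apply local exchange arguments: whenever a component is too short or has the wrong residue modulo $3$, the minimum-degree condition guarantees edges of $G$ leaving that component into other components, and a careful swap reduces the number of ``bad'' components. Once the cover is locally optimal under all such swaps, summing $\lceil n_i/3\rceil$ over all components should yield $\gamma(G)\le \tfrac{3n}{8}$.

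The main obstacle is that the pointwise bound $\lceil k/3\rceil \le \tfrac{3k}{8}$ fails for $k\in\{4,5,7\}$, so one cannot simply insist that every component have at least three vertices and stop there. Reed's key technical contribution is showing that these problematic lengths can either be avoided in $F$ altogether or, when unavoidable, amortized against sufficiently long neighboring components so that the total remains bounded by $\tfrac{3n}{8}$. Making this exchange analysis airtight, and verifying that $\delta(G)\ge 3$ really suffices to resolve every obstruction, is the delicate heart of the proof; the component-wise domination bookkeeping on paths and cycles afterward is routine.
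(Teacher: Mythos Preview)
The paper does not prove this theorem; it is quoted from \cite{dom-book} (the result is originally due to Reed) and invoked only as a black box to obtain Corollary~\ref{cor_3:8}. There is thus no proof in the paper to compare against.

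Regarding your sketch on its own merits: the strategy of dominating $G$ via a spanning path/cycle cover is in the right spirit, but there is a concrete arithmetic slip. The pointwise inequality $\lceil k/3\rceil \le \tfrac{3k}{8}$ fails not only for $k\in\{4,5,7\}$ but also for $k\in\{1,2,10,13\}$: when $k\equiv 1\pmod 3$ one has $\lceil k/3\rceil=(k+2)/3$, and $(k+2)/3\le 3k/8$ requires $k\ge 16$. So the set of ``bad'' component lengths your exchange argument must eliminate is larger than stated, and the closing remark that the component-wise bookkeeping ``afterward is routine'' is too optimistic. You correctly identify that the exchange analysis is the delicate core, but as written this is a plan rather than a proof, and the plan already underestimates the obstruction.
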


\begin{cor} \label{cor_3:8}Let $G$ be a connected graph of order $n$  with minimum degree $\delta (G)\geq 3$.  If $\pd (G)\geq {{3n}\over {16}}$, then  $\thpdx(G)=\gamma(G)$.
\end{cor}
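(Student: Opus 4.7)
The plan is to mimic the template used in the two previous corollaries (Corollary \ref{cor-n/2} and Corollary \ref{cor_2:5}): combine the upper bound on $\gamma(G)$ from the cited domination theorem with the sufficient condition of Proposition \ref{r:comp-bd-thpdx}\eqref{r:comp-bd-thpdx-1}. Concretely, I would set $b := \frac{3n}{8}$. Since $\delta(G)\ge 3$ by hypothesis, Theorem \ref{3:8} gives $\gamma(G)\le b$. The assumed lower bound $\pd(G)\ge \frac{3n}{16}$ is exactly $\pd(G)\ge \frac{b}{2}$, which is the hypothesis of Proposition \ref{r:comp-bd-thpdx}\eqref{r:comp-bd-thpdx-1}. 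Applying that proposition immediately yields $\thpdx(G)=\gamma(G)$.

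Unlike the proof of Corollary \ref{cor_2:5}, where the excluded family $\mathcal{A}$ from Theorem \ref{2:5} had to be addressed separately (by observing that each graph in $\mathcal{A}$ either already satisfies the conclusion or fails the hypothesis), Theorem \ref{3:8} as stated in the excerpt has no exceptional family. Thus no case analysis on excluded graphs is required, and the argument consists simply of chaining the two inequalities $2\pd(G)\ge \tfrac{3n}{8}\ge \gamma(G)$ and invoking Proposition \ref{r:comp-bd-thpdx}\eqref{r:comp-bd-thpdx-1}.

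There is no genuine obstacle here; the entire proof is a one-line substitution into the framework already established in Section \ref{s:thdpx=gamma}. If anything, the only item worth noting in the write-up is that the hypothesis $\delta(G)\ge 3$ is needed solely to invoke Theorem \ref{3:8}, while Proposition \ref{r:comp-bd-thpdx}\eqref{r:comp-bd-thpdx-1} requires no degree assumption. The total length of the proof should be at most two sentences, parallel in form to the proof of Corollary \ref{cor-n/2}.
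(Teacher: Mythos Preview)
Your proposal is correct and matches the paper's approach exactly: the corollary is stated without proof in the paper, as it follows immediately from Theorem~\ref{3:8} and Proposition~\ref{r:comp-bd-thpdx}\eqref{r:comp-bd-thpdx-1} by setting $b=\frac{3n}{8}$, just as you describe. Your observation that no exceptional family needs to be handled (in contrast to Corollary~\ref{cor_2:5}) is also correct.
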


It should be noted that while the conditions above are sufficient, they are not necessary.  Graphs with very low power domination number may still realize $\thpdx(G)=\gamma(G)$.  For example, $\pd(P_n)=1$ and  $\thpdx(P_n)=\gamma(P_n)=\lc\frac n 3\rc$  (Observation \ref{p:pathcycle}).  

There are also many other families of graphs that have $\thpdx(G)=\gamma(G)$, including unit interval graphs (described in Section \ref{s:unit-interval}) and some Cartesian products (described in Section \ref{s:Cart-prod}).

\section{Extreme product power   throttling numbers}\label{s:extreme}

In this section we characterize graphs with extremely low and high product power throttling numbers.%

\subsection{Low product power   throttling numbers}

By Observation \ref{completethpdx}, any graph $G$ has $\thpdx(G)\ge1$ and  $\thpdx(G)=1$ if and only if $\gamma(G) = 1$. The following result characterizes  graphs $G$ for which $\thpdx(G)=2$.  %

\begin{thm}\label{t:thpdx2}
A connected graph $G$ has $\thpdx(G)=2$ if and only if $G$ satisfies one or both of the following conditions:
\ben[(a)] 
\item\label{i:d=2} $\gamma(G)=2$.  
\item\label{i:pt=2} $\pd(G)=1$ and $\ptpd(G)=2$.  
\een
\end{thm}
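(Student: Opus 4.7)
The plan is to prove both directions by elementary case analysis, relying on Observation \ref{completethpdx} (which says $\thpdx(G)=1$ iff $\gamma(G)=1$) and Proposition \ref{basic} (which gives $\pd(G)=\gamma(G)$ iff $\ptpd(G)=1$).

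For the forward direction, suppose $\thpdx(G)=2$. Then there is a set $S$ realizing the minimum, so $|S|\cdot\ptpd(G;S)=2$. Since both factors are positive integers, either $|S|=2$ and $\ptpd(G;S)=1$, or $|S|=1$ and $\ptpd(G;S)=2$. In the first subcase, $S$ is a dominating set, so $\gamma(G)\le 2$, and equality must hold because $\gamma(G)=1$ would give $\thpdx(G)=1$ by Observation \ref{completethpdx}; this yields \eqref{i:d=2}. In the second subcase, $S$ is a power dominating set of size one, so $\pd(G)=1$ and $\ptpd(G)\le 2$. If $\ptpd(G)=1$, then Proposition \ref{basic} would force $\gamma(G)=\pd(G)=1$ and hence $\thpdx(G)=1$, a contradiction; thus $\ptpd(G)=2$, giving \eqref{i:pt=2}.

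For the backward direction, I will show each condition yields both $\thpdx(G)\le 2$ and $\thpdx(G)\ge 2$. If \eqref{i:d=2} holds, a minimum dominating set $S$ of size $2$ satisfies $\thpdx(G;S)=2\cdot 1 = 2$; since $\gamma(G)=2>1$, Observation \ref{completethpdx} rules out $\thpdx(G)=1$, so $\thpdx(G)=2$. If \eqref{i:pt=2} holds, take a power dominating set $S$ of size $1$ with $\ptpd(G;S)=\ptpd(G)=2$; then $\thpdx(G;S)=2$. To see $\thpdx(G)\ge 2$, observe that $\ptpd(G)=2\ne 1$ together with Proposition \ref{basic} forces $\pd(G)\ne\gamma(G)$, so $\gamma(G)\ge 2$, and again Observation \ref{completethpdx} prevents $\thpdx(G)=1$.

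The proof has no real obstacle; the only place one must be careful is in the backward direction under condition \eqref{i:pt=2}, where establishing $\gamma(G)\ge 2$ (equivalently, ruling out $\thpdx(G)=1$) uses Proposition \ref{basic} in a slightly indirect way rather than following immediately from the hypothesis. Otherwise this is a routine $2\times 1$ versus $1\times 2$ factorization argument combined with the two cited results.
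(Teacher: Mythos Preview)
Your proof is correct and follows essentially the same approach as the paper: a case analysis on the factorization $2=2\cdot 1=1\cdot 2$ combined with Observation~\ref{completethpdx}. The only minor difference is that where the paper tersely asserts ``$\ptpd(G)=2$ implies $\thpdx(G)\ge 2$,'' you route the argument through Proposition~\ref{basic} to deduce $\gamma(G)\ge 2$ first; both are valid and amount to the same thing.
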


\bpf
Suppose $G$ is a graph satisfying at least one of the conditions. Then  $\thpdx(G)\le 2$ by Observation \ref{o:basic-bds}.  If    $\gamma(G)=2$ or $\ptpd(G)=2$, then  %
$\thpdx(G)\ge 2$.%

Conversely, assume $G$ is a graph with $\thpdx(G)=2$, and let $S\subset V(G)$ such that $\thpdx(G;S)=\thpdx(G)$. %
There are only two possibilities:  $|S|=1$ and $\ptpd(G;S)=2$, or $|S|=2$ and $\ptpd(G;S)=1$. 
Suppose first that $|S|=2$ and $\ptpd(G;S)=1$.  Then $S$ is a dominating set of $G$ because  $\ptpd(G;S)=1$, and we conclude $\gamma (G)\leq |S|=2$. Furthermore, $\thpdx(G)=2$ implies $\gamma(G)\ge 2$, so $\gamma(G)= 2$. %
Finally, consider  the case $|S|=1$ and $\ptpd(G;S)=2$. Since $|S|=1$,   $S$ must be a minimum power dominating set of $G$ and  $\pd(G)=1$. Furthermore, $\thpdx(G)=2$ implies $\ptpd(G)\ge 2$. %
\epf

\begin{rem} Any connected graph satisfying Theorem \ref{t:thpdx2}\eqref{i:pt=2} can be constructed as follows: Start with any graph $H$ of order at least two such that $\gamma(H)=1$. %
Suppose vertex $u$ is adjacent to every other vertex and let the remaining vertices of $H$ be denoted by $v_1,\dots,v_k$.   %
Add  $1\le \ell\le k$ additional vertices $\{w_1,\dots,w_\ell\}$ with $v_i$ adjacent to $w_i$ and to none of the other $w_j$.  Add any subset (possibly empty) of the edges $\{ v_sw_j:s=\ell+1,\dots,k, j=1,\dots, \ell\}$ and  any subset (possibly empty) of edges of the form $w_iw_j$.
\end{rem}

Observe that conditions \eqref{i:d=2} and \eqref{i:pt=2} can hold simultaneously. For example, if $G=C_5$ or $G=P_5$, then   $\pd(G)=1$ and $\ptpd(G)=2$, and $\gamma(G)=2$. %

\subsection{High product power   throttling numbers}

We know $\thpdx(G)\le\gamma(G)\le \frac n 2$ for any connected graph $G$ of order $n\ge 2$ %
(Theorem \ref{half}).  In this section we characterize graphs having $\thpdx(G)= \frac n 2$.  %

\begin{rem}\label{r:no-leaf} It is known that if a connected graph has a high-degree ($\ge 3$) vertex, then there is a minimum power dominating set in which each vertex has degree at least three. It is not true that for every graph with a high-degree vertex there is an optimal set for product power throttling that has all high-degree vertices.  For example, the spider  $S(4,1,1)$ has $\thpdx(S(4,1,1))=\gamma(S(4,1,1))=2$  but the power propagation time of the only high-degree vertex is 4.
 It is  true that for any graph that has at least one vertex of degree two or more, there is an optimal set for product power throttling in which all vertices have degree at least two (no leaves).  This can be seen by replacing each leaf in an optimal set for product power   throttling by its neighbor (no redundancies can be created or the set would not have been optimal).
\end{rem} 

For a graph $H$, the {\em corona} of $H$ with $K_1$,  denoted by $H\circ K_1$, is the graph obtained from $H$ by appending a leaf to each vertex of $H$. 

\begin{thm}\label{half-ratio}
If $H$ is a connected graph of order at least two and $G=H\circ K_1$, then $\thpdx(G) = 2\gamma(H)$.  Furthermore, any power dominating set for $G$ that is a subset of $V(H)$ 
must be a dominating set for $H$.
\end{thm}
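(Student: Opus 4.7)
The plan is to split the argument into an upper bound, the furthermore statement as a key lemma, and a lower bound that reduces to the case $S \subseteq V(H)$.

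For the upper bound, I would take $S_0$ to be a minimum dominating set of $H$, so $|S_0| = \gamma(H)$. After the domination step in $G$, all of $V(H)$ is observed (since $S_0$ dominates $H$), together with the leaves attached to vertices of $S_0$. In round $2$, each $v \in V(H) \setminus S_0$ is observed and has all its $H$-neighbors observed, so its leaf $v'$ is the unique unobserved neighbor of $v$ in $G$ and gets forced. This gives $\ptpd(G;S_0) \le 2$ and hence $\thpdx(G) \le 2\gamma(H)$.

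For the furthermore, I would assume $S \subseteq V(H)$ is a power dominating set of $G$ and, toward a contradiction, pick $v \in V(H) \setminus N_H[S]$. I claim by induction on the round that $v$ and $v'$ remain unobserved. After the domination step, $v, v' \notin N_G[S]$ since the only $G$-neighbors of $v$ are its $H$-neighbors together with $v'$, none of which are in $S$. For the inductive step, $v'$ cannot be forced while $v$ is white because $v$ is its only neighbor. Any force of $v$ must come from some $u \in N_H(v)$ and requires the leaf $u'$ to be observed first; since $u' \notin S$, observation of $u'$ requires either $u \in S$ (which would give $v \in N_H(S)$, contradicting $v \notin N_H[S]$) or $u$ to force $u'$ at some earlier round. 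The latter requires every $H$-neighbor of $u$, and in particular $v$, to already be observed, contradicting the inductive hypothesis. Hence $v$ is never observed, contradicting $S$ being a power dominating set.

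For the lower bound, given any power dominating set $S$ of $G$, I would define $S' = (S \cap V(H)) \cup \{v \in V(H) : v' \in S\}$, which lies in $V(H)$ and satisfies $|S'| \le |S|$. A direct case analysis shows $N_G[S] \subseteq N_G[S']$: replacing a leaf $v' \in S$ by $v \in V(H)$ can only enlarge the closed neighborhood. By monotonicity of the propagation in the initial observed set, $\ptpd(G;S') \le \ptpd(G;S)$, so $\thpdx(G;S') \le \thpdx(G;S)$. The furthermore then forces $|S'| \ge \gamma(H)$, and either $S' \subsetneq V(H)$ (so some leaf stays unobserved after the domination step and $\ptpd(G;S') \ge 2$, giving $\thpdx(G;S') \ge 2\gamma(H)$) or $S' = V(H)$ (so $|S'| \ge 2\gamma(H)$ by Theorem \ref{half}). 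The main obstacle is the furthermore claim: the circular interdependence between "$v$ cannot be observed until $u'$ is" and "$u'$ cannot be observed until all of $N_H(u)$ is, including $v$" must be unwound carefully via the round induction, and this is the delicate core of the proof.
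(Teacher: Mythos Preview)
Your proposal is correct and follows essentially the same approach as the paper: the upper bound via a minimum dominating set of $H$, the ``furthermore'' via the deadlock between forcing $v$ and forcing the pendant $u'$ of any $u\in N_H(v)$, and the lower bound by first moving leaves in $S$ to their neighbors in $V(H)$ and then splitting into the case $\ptpd\ge 2$ versus the case where $S'$ already dominates $G$ (equivalently $S'=V(H)$), invoking Theorem~\ref{half} in the latter. The only differences are cosmetic: the paper cites Remark~\ref{r:no-leaf} for the leaf-replacement step whereas you spell out $N_G[S]\subseteq N_G[S']$ and monotonicity, and the paper phrases the ``furthermore'' more tersely (each $u\in N_H(w)$ always has its leaf $u'$ unobserved, so can never force $w$) rather than as a formal round induction.
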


\begin{proof}
First we show $\thpdx(G) \le 2\gamma(H)$.    Let $S$ be a dominating set of $H$ with $|S| = \gamma(H)$. After the first round,  all vertices of $H$ are observed, each vertex of $H$ has at most one unobserved neighbor, and each unobserved vertex has an observed neighbor.  Thus, all vertices of $G$ are observed after the second round.  %

  Next we prove that any power dominating set for $G$ that is a subset of $V(H)$  must be a dominating set for $H$.  Let  $S\subseteq V(H)$ be a power dominating set of $G$.  %
 Suppose  that there exists a vertex $w \in V(H)$ that remains unobserved after the first round, and thus none of $w$'s neighbors are in $S$.  Every $u \in V(H)$ adjacent to $w$ is also adjacent to at least one additional unobserved vertex (its leaf neighbor).  Thus $w$ will never be observed by one of its neighbors and this contradicts the assumption that $S$ is a power dominating set.
 
Finally, we show $\thpdx(G) \ge 2\gamma(H)$.  First consider the case that $\thpdx(G)$ is realized by a power dominating set $S$ with power propagation time at least two. Without loss of generality, we may assume $S \subseteq V(H)$ (cf. Remark \ref{r:no-leaf}).   
Then $S \ge \gamma(H)$  since we proved above that  $S$ is a dominating set of $H$.  Thus $\thpdx(G) = |S|\ptpd(G;S) \ge 2\gamma(H)$.
Now consider the case in which $\thpdx(G)$ is realized by a dominating set $S$ of $G$, so $\thpdx(G) = \gamma(G)$.    Observe that $\gamma(G) = |V(H)|$ since  $G$ has $|V(H)|$ leaves and each leaf must be dominated by a different vertex of $G$. %
Thus  $\thpdx(G) = \gamma(G) = |V(H)| \ge 2 \gamma(H)$ by Theorem \ref{half}. %
\end{proof}

Since $\gamma(H'\circ K_1)=n'$ for any   connected graph $H'$ of order $n'$, the next result is immediate.

\begin{cor}\label{c:half}
If $H$ is a connected graph of order $n$ and $G=(H \circ K_1) \circ K_1$, then $\thpdx(G)=2n=\frac 1 2 |V(G)|$.  \end{cor}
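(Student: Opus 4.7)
The plan is to apply Theorem \ref{half-ratio} with the inner corona factor taken to be $H' := H \circ K_1$ rather than $H$ itself, since by the definition of $G$ we have $G = H' \circ K_1$. This reduces the statement to a domination-number computation on $H'$.

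First I would verify the hypothesis of Theorem \ref{half-ratio}. Because $H$ is connected, $H' = H \circ K_1$ is connected (every vertex of $H$ is retained, and each new leaf is attached to its corresponding parent). Moreover, $|V(H')| = 2n \ge 2$, so the theorem applies and yields
\[
\thpdx(G) = 2\gamma(H').
\]

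Next I would compute $\gamma(H')$. The graph $H'$ consists of $H$ together with $n$ pendant leaves, one attached to each vertex of $H$. Each leaf has a unique neighbor (its ``parent'' in $H$), so any dominating set of $H'$ must contain, for each leaf $w$, either $w$ or its parent; since the $n$ parent-leaf pairs are vertex-disjoint, this forces $\gamma(H') \ge n$. Conversely, $V(H)$ is itself a dominating set of $H'$ because it contains every vertex of $H$ together with the unique neighbor of every leaf, giving $\gamma(H') \le n$. Hence $\gamma(H') = n$.

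Combining the two observations yields $\thpdx(G) = 2n$, and since $|V(G)| = 2 \cdot |V(H')| = 4n$, this also equals $\tfrac{1}{2}|V(G)|$, as required. I do not expect a real obstacle here: the proof is essentially bookkeeping, and the only subtlety worth flagging is the ``associativity'' of the double corona — one must recognize $(H \circ K_1) \circ K_1$ as a single corona built on $H \circ K_1$ and verify that this intermediate graph has order at least two and domination number exactly $n$ before invoking Theorem \ref{half-ratio}.
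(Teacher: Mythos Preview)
Your proposal is correct and follows exactly the paper's approach: apply Theorem~\ref{half-ratio} to $H' = H\circ K_1$ and use the standard fact that $\gamma(H'\circ K_1)=|V(H')|$, which the paper states just before the corollary and you prove directly. The only difference is that you spell out the easy domination computation rather than citing it.
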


We use the next characterization of graphs $G$ of order $n$ having $\gamma(G)=\frac n 2$ to characterize graphs having $\thpdx(G)=\frac n 2$.

\begin{thm}\label{t:dom-half}{\rm \cite[Theorem 2.2]{dom-book}}
A  connected graph  $G$  of order $n\ge 2$ has $\gamma(G)=\frac n 2$ if and only if $G=G'\circ K_1$ for some connected graph $G'$ or  $G=C_4$.  \end{thm}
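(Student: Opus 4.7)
The plan is to prove both implications, with the nontrivial work concentrated in the reverse direction.

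For the forward direction, I would argue that if $G = G' \circ K_1$ with $|V(G')| = n/2$, then each pendant leaf $\ell$ and its neighbor $v \in V(G')$ yield a disjoint "forced pair" because $N[\ell] = \{\ell, v\}$; any dominating set must meet each of the $n/2$ such pairs, giving $\gamma(G) \geq n/2$, with equality achieved by $V(G')$. For $G = C_4$, direct verification confirms $\gamma(C_4) = 2 = n/2$.

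For the reverse direction, assume $\gamma(G) = n/2$ and let $D$ be a minimum dominating set, so $|D| = |A| = n/2$ where $A = V(G) \setminus D$. I would apply the standard private-neighbor principle: since $D \setminus \{d\}$ is not dominating for each $d \in D$, there exists $w_d \in V(G)$ with $N[w_d] \cap D = \{d\}$, which is either $d$ itself (when $d$ is isolated in $G[D]$) or an external private neighbor in $A$. Partition $D = D_0 \cup D_1$ accordingly and fix $\phi(d) \in A$ for each $d \in D_1$; since distinct private neighborhoods are disjoint, $A_1 := \phi(D_1) \subseteq A$ has $|A_1| = |D_1|$, leaving $A_0 := A \setminus A_1$ with $|A_0| = |D_0|$. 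Each $a \in A_0$ must satisfy $|N(a) \cap D| \geq 2$; otherwise it would itself qualify as an external private neighbor.

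The structural conclusion follows from exchange arguments exploiting minimality of $D$. In the case $D_0 = \emptyset$, the map $\phi \colon D \to A$ is a bijection, and I would verify that (a) $A$ induces no edges and (b) each $\phi(d)$ has $d$ as its sole neighbor in $D$: a violating edge $a_1 a_2$ or a cross edge from $\phi(d_1)$ to $d_2 \neq d_1$ allows a swap producing a dominating set of size $n/2 - 1$, contradicting minimality. These two facts force the edge set of $G$ to be exactly $E(G[D]) \cup \{d\phi(d) : d \in D\}$, so $G = G[D] \circ K_1$. In the case $D_0 \neq \emptyset$, I would first show $D_1 = \emptyset$ by taking a shortest path from $D_0$ to $D_1$ in the connected graph $G$ and performing a swap along it that contradicts minimality; then I would show that the bipartite graph between $D_0$ and $A_0$, combined with connectedness and the equality $|D_0| = |A_0|$, reduces to $|D_0| = 2$ with a $K_{2,2}$ structure, yielding $G = C_4$ (the degenerate subcase $|D_0|=1$ recovers $K_2 = K_1 \circ K_1$).

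The main obstacle is executing the exchange arguments rigorously: a naive swap may leave some private neighbor of the removed vertex undominated, so one sometimes has to perform a two-for-one exchange that removes two $D$-vertices and inserts one $A$-vertex, then verify domination by tracking all of the private neighborhoods of the removed vertices. The most delicate step is ruling out mixed configurations with both $D_0, D_1 \neq \emptyset$ and ruling out larger alternating-cycle configurations with $D_0 \neq \emptyset$, showing that the only connected survivor is precisely $C_4$.
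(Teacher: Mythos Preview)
The paper does not prove this theorem: it is quoted from Haynes, Hedetniemi, and Slater's \emph{Fundamentals of Domination in Graphs} (Theorem~2.2 in reference~\cite{dom-book}) and invoked as a black box in the proof of Theorem~\ref{half-ratio-iff}. There is therefore no argument in the present paper to compare your proposal against.

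On the merits of your sketch itself: the forward direction is correct as written, and the reverse-direction strategy (private-neighbour structure of a minimum dominating set $D$, combined with the fact that $A = V(G)\setminus D$ is also a minimum dominating set) is indeed the standard route, going back to Payan--Xuong and to Fink--Jacobson--Kinch--Roberts. In the case $D_0=\emptyset$ your pigeonhole actually yields more than you state: since every $a\in A$ equals $\phi(d)$ for a unique $d$ and has $N(a)\cap D=\{d\}$, the $D$--$A$ edges already form a perfect matching, so (b) is automatic and each $d$ has $\phi(d)$ as its \emph{only} $A$-neighbour. The real work is (a), and here a single swap is not enough; one clean way forward is to replace $D$ by $D'=(D\setminus\{d_2\})\cup\{a_2\}$, which is again a minimum dominating set, and rerun the private-neighbour count for $D'$: the edge $a_1a_2$ forces $N(a_1)\cap D'=\{d_1,a_2\}$, so $d_1$ loses its external private neighbour, and unless $d_1$ is isolated in $G[D']$ you obtain a dominating set of size $\tfrac n2-1$. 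Iterating this kind of argument pins down the structure. Your treatment of the case $D_0\neq\emptyset$ is where the outline is thinnest: both the ``shortest-path swap'' to eliminate $D_1$ and the reduction of the remaining bipartite configuration to $C_4$ require a more careful case analysis than the one sentence you give them, and this is precisely where the published proofs expend most of their effort. Your self-diagnosis of the obstacle is accurate; the plan is viable but the delicate steps you flag are not yet done.
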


\begin{thm}\label{half-ratio-iff}
A  connected graph  $G$  of order $n\ge 2$ has $\thpdx(G)=\frac n 2$ if and only if $G=(H\circ K_1)\circ K_1$ for some connected graph $H$,  $G=C_4\circ K_1$, or $G=C_4$.   \end{thm}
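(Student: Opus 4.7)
The plan is to prove both directions by combining earlier results.

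For the ``if'' direction, I would verify each of the three stated forms directly. If $G = C_4$, Observation~\ref{p:pathcycle} gives $\thpdx(C_4) = \lceil 4/3 \rceil = 2 = n/2$. If $G = C_4 \circ K_1$, then $n = 8$ and Theorem~\ref{half-ratio} applied with $H = C_4$ yields $\thpdx(G) = 2\gamma(C_4) = 4 = n/2$. If $G = (H \circ K_1) \circ K_1$ for a connected graph $H$ of order $n_H$, then Corollary~\ref{c:half} directly gives $\thpdx(G) = 2n_H = |V(G)|/2$.

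For the ``only if'' direction, the opening move is to squeeze $\gamma(G)$ from both sides: Observation~\ref{o:basic-bds} gives $\thpdx(G) \le \gamma(G)$ and Theorem~\ref{half} gives $\gamma(G) \le n/2$, so the hypothesis $\thpdx(G) = n/2$ forces $\gamma(G) = n/2$. Theorem~\ref{t:dom-half} then pins $G$ down to one of two families: either $G = C_4$ (done), or $G = G' \circ K_1$ for some connected graph $G'$ of order $n' = n/2$. In the second alternative with $n' \ge 2$, Theorem~\ref{half-ratio} applies and gives $\thpdx(G) = 2\gamma(G')$; combining with the hypothesis yields $\gamma(G') = n/4 = n'/2$. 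Applying Theorem~\ref{t:dom-half} a second time, now to $G'$ itself, gives either $G' = C_4$ (so $G = C_4 \circ K_1$) or $G' = H \circ K_1$ for some connected graph $H$ (so $G = (H \circ K_1) \circ K_1$), completing the classification.

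The main (mild) obstacle I anticipate is the boundary case $n' = 1$, where $G = K_2$. Here $\thpdx(K_2) = 1 = n/2$ holds but $K_2$ is not of any listed form, and Theorem~\ref{half-ratio} is only stated for inner graph of order at least two, so the argument above cannot proceed verbatim. I would handle this either by recording that the stated theorem implicitly excludes $G = K_2$ (consistent with the paper's focus on the nontrivial corona structure) or by adding $K_2$ explicitly to the list. Aside from this single edge case, the proof is a clean two-step unwinding of the corona structure—peel one layer of $\circ K_1$ via Theorem~\ref{half-ratio}, then classify the inner graph $G'$ via Theorem~\ref{t:dom-half}—so no substantial new ideas beyond the cited results are required.
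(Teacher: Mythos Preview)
Your proposal is correct and follows essentially the same approach as the paper: squeeze $\gamma(G)=n/2$ via Observation~\ref{o:basic-bds} and Theorem~\ref{half}, apply Theorem~\ref{t:dom-half} to get $G=C_4$ or $G=G'\circ K_1$, then use Theorem~\ref{half-ratio} to force $\gamma(G')=|V(G')|/2$ and apply Theorem~\ref{t:dom-half} a second time to $G'$.

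Your observation about the $K_2$ boundary case is well taken. The paper disposes of $n=2$ with the single sentence ``If $n=2$, then $G=K_1\circ K_1$, so assume $n\ge 4$,'' apparently regarding $K_2=K_1\circ K_1$ as covered; but as you note, $K_1\circ K_1$ is not literally of the form $(H\circ K_1)\circ K_1$ for a connected graph $H$ (that would require $|V(H)|=\tfrac12$), nor is it $C_4$ or $C_4\circ K_1$. So the theorem statement, read strictly, omits $K_2$, and your instinct to flag it is sound rather than a defect in your argument.
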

\bpf Corollary \ref{c:half} implies $\thpdx((H\circ K_1)\circ K_1)=\frac 1 2 |V((H\circ K_1)\circ K_1)|$ and Theorem \ref{half-ratio} implies $\thpdx(C_4\circ K_1)=4$; it is easy to see that  $\thpdx(C_4)=2$.

Assume $G$ is a connected graph of order $n$ such that $\thpdx(G)=\frac n 2$.  Then by Observation \ref{o:basic-bds} and Theorem \ref{half}, $\gamma(G)=\frac n 2$.  Then $G=G'\circ K_1$ for some connected graph $G'$ or $G=C_4$ by Theorem \ref{t:dom-half}.  If $n=2$, then $G=K_1\circ K_1$, so assume $n\ge 4$.  Assume $G=G'\circ K_1$, so    $\frac n 2=\thpdx(G)=2\gamma(G')$ by Theorem \ref{half-ratio}. Thus $\gamma(G')=\frac n 4 = \frac{|V(G')|} 2$, so  $G'=H\circ K_1$ for some connected $H$ or $G'=C_4$.  %
\epf

\begin{figure}[h!]
\centering
\begin{tikzpicture}[scale=.75]
    \draw[thick] (0,0) ellipse (2cm and 1cm);
    \vtx(A) at (0,1)[label=below:$u$] {};
    \vtx(B) at (.5,1.75)[label=right:$z_u$]{};
    \vtx(C) at (-.5,1.75)[label=left:$y_u$]{};
    \vtx(D) at (-.5,2.75)[label=left:$x_u$]{};
    
    \draw[thick] (A) to (B);
    \draw[thick] (A) to (C);
    \draw[thick](C) to (D);
    
\end{tikzpicture}
\caption{Constructing a graph with product power   throttling number equal to half its order.  \label{f:Gcirc}}
\end{figure}
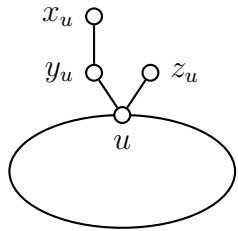
A graph $G=(H \circ K_1) \circ K_1$ can also be constructed from a connected graph $H$ of order at least one by  appending to each vertex $u$ of $H$ a path of length two (with vertices $x_u$ and $y_u$) and a path of length one (with vertex $z_u$) as shown in Figure~\ref{f:Gcirc}.  Observe that the order of a graph $G=(H \circ K_1) \circ K_1$ is always divisible by four.

\section{Unit interval graphs}\label{s:unit-interval}

A graph $G = (V,E)$ is a \emph{unit interval graph} if each vertex $v \in V$ can be assigned a closed unit length real interval $I(v)$ so that vertices are adjacent precisely when their assigned intervals intersect.  In symbols, for $x, y \in V(G)$ we have $xy \in E(G)$ if and only if $I(x) \cap I(y) \neq \emptyset$.    Any unit interval graph has a unit interval representation in which all the interval endpoints are distinct, and we assume all our representations have this property.
  See \cite{GoTr04} for additional background.  
It is convenient to write $I(v) = [\ell(v),r(v)]$ where $r(v) - \ell(v) = 1$.  If $G$ is a unit interval graph with a fixed representation, we refer to $\ell(v)$ as the {\em left endpoint} of the vertex $v$ (as well as the left endpoint of the interval $I(v)$), and analogously for $r(v)$.

In Theorem \ref{unit-int-gr-thm}, we show that the  product power throttling number of a unit interval graph is its domination number.  The proof of Theorem~\ref{unit-int-gr-thm} will depend on several lemmas.
 We begin with some additional notation. %
   Let $G$ be a unit interval graph and fix a unit interval representation $I(v) = [\ell(v),r(v)]$. Then the order of the left endpoints provides an order on the vertices, called the {\em induced order}.  That is, $v<u$ if and only if $\ell(v)<\ell(u)$.

 \begin{obs}\label{o:UI-consec}  Let $G$ be a connected unit interval graph with a fixed unit representation.   For each vertex $v$, the closed neighborhood $N[v]$ is a consecutive  set of vertices in the induced order. \end{obs}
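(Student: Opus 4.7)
The plan is to translate intersection of intervals into a simple arithmetic condition on left endpoints, and then argue that consecutiveness in the induced order follows instantly. Because every interval has unit length, writing $I(u) = [\ell(u), \ell(u)+1]$ and $I(v) = [\ell(v), \ell(v)+1]$, the intervals $I(u)$ and $I(v)$ intersect if and only if $|\ell(u) - \ell(v)| \le 1$. Including $v$ itself (trivially), this gives the characterization
\[
N[v] \;=\; \{\, u \in V(G) : \ell(v) - 1 \le \ell(u) \le \ell(v) + 1 \,\}.
\]

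First I would state this arithmetic characterization, noting that it uses only the definition of adjacency in a unit interval representation plus the fact that all intervals are closed and of length $1$. Then I would derive the consecutiveness: suppose $u_1 < u_2 < u_3$ in the induced order with $u_1, u_3 \in N[v]$. By definition of the induced order, $\ell(u_1) < \ell(u_2) < \ell(u_3)$, and the characterization gives $\ell(v) - 1 \le \ell(u_1)$ and $\ell(u_3) \le \ell(v) + 1$. Combining these yields $\ell(v) - 1 \le \ell(u_2) \le \ell(v) + 1$, so $u_2 \in N[v]$. Since any set closed under taking elements of the induced order between two of its members is consecutive, $N[v]$ is a consecutive set of vertices in the induced order.

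There is essentially no obstacle here; the only points requiring a moment's care are that $v \in N[v]$ by convention (handled by the closed-neighborhood definition), and that the induced order is well-defined because the fixed representation has distinct endpoints, as the paper assumes in the paragraph preceding the observation. Connectedness of $G$ plays no role in the argument itself and is mentioned only because it is the standing hypothesis for graphs in this section.
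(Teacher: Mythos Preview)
Your argument is correct: the characterization $N[v]=\{u:\ell(v)-1\le \ell(u)\le \ell(v)+1\}$ follows immediately from the unit-length condition, and the betweenness argument then gives consecutiveness. The paper states this as an observation without proof, so your write-up simply supplies the routine verification the authors left implicit; there is no alternative approach to compare against.
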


  The next lemma shows that in a unit interval graph with a fixed representation, the order of the vertices in a forcing chain $v_0\to v_1\to\dots \to v_i$ either follows the induced order $<$ on the vertices, or follows the reverse order (i.e., $v_{j-1}>v_j$ for $j=1,\dots,i$).  Recall that $\rd(v)$ is the number of the round in which vertex $v$ is first observed.

 \begin{lem}\label{consec-nhbd-direct-force} Let $G$ be a connected unit interval graph with a fixed unit representation and induced order of the vertices, let $S$  be a power dominating set of $G$, let $\F$ be a set of forces,  let $v_0\to v_1\to\dots \to v_i$ be a forcing chain, and let   $\rd(v_i)=k\ge i$. If $v_0<v_1$, then $v_1 < v_2 < \cdots < v_i$. %
Moreover,  $\rd(u) \le k-1$ for all $u$   such that  $v_0\le u< v_i$ when $i\ge 2$.  %
  Analogous statements are true when  $v_0 > v_1$.
   \end{lem}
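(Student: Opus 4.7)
The plan is induction on the chain length $i$, driven by two ingredients. First, the \emph{sandwiching} consequence of Observation~\ref{o:UI-consec}: if $a<b<c$ in the induced order and $a\sim c$, then $a\sim b$ and $b\sim c$, since $N[a]$ and $N[c]$ are consecutive sets each containing both endpoints. Second, the timing principle that whenever $x\to y$ is a force in $\F$ and $z\in N(x)\setminus\{y\}$, one has $\rd(z)\le\rd(y)$: for a zero force this follows from the color-change rule (with strict $<$), while for a domination force $x\in S$ puts $z\in N[S]$ and forces $\rd(z)\le 1=\rd(y)$.

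For Part 1, induct on $i\ge 1$; the base $i=1$ is the hypothesis $v_0<v_1$. For the inductive step, assume $v_0<v_1<\cdots<v_{i-1}$ and suppose toward a contradiction that $v_i<v_{i-1}$. Since rounds strictly increase along a forcing chain, $v_i\neq v_{i-2}$, so either $v_i<v_{i-2}<v_{i-1}$ or $v_{i-2}<v_i<v_{i-1}$. In both cases sandwiching delivers $v_{i-2}\sim v_i$ (using $v_i\sim v_{i-1}$ in the first subcase, $v_{i-2}\sim v_{i-1}$ in the second). The timing principle applied to $v_{i-2}\to v_{i-1}$ then gives $\rd(v_i)\le\rd(v_{i-1})$, contradicting $\rd(v_i)\ge\rd(v_{i-1})+1$ coming from $v_{i-1}\to v_i$.

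For the moreover statement, Part 1 furnishes $v_0<v_1<\cdots<v_i$. Fix $u$ with $v_0\le u<v_i$ and let $j\in\{1,\ldots,i\}$ be the least index with $u<v_j$, so $v_{j-1}\le u<v_j$. If $u=v_{j-1}$, strict monotonicity of rounds along the chain gives $\rd(u)\le k-(i-j+1)\le k-1$. Otherwise $v_{j-1}<u<v_j$, and Observation~\ref{o:UI-consec} applied to $N[v_{j-1}]\ni v_j$ places $u\in N[v_{j-1}]$; the timing principle applied to $v_{j-1}\to v_j$ then yields $\rd(u)\le\rd(v_j)\le k-(i-j)\le k-1$ when $j<i$, while for $j=i$ the force $v_{i-1}\to v_i$ is necessarily a zero force (since $v_{i-1}$, being itself the target of $v_{i-2}\to v_{i-1}$ with $i\ge 2$, cannot lie in $S$), so the strict form of the timing principle gives $\rd(u)\le k-1$.

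The main obstacle is the Part 1 step: blocking any ``reversal'' of chain direction relies entirely on the unit-interval structure forcing $v_{i-2}\sim v_i$ whenever $v_i<v_{i-1}$, after which the two timing constraints on $\rd(v_i)$ coming from the consecutive forces $v_{i-2}\to v_{i-1}$ and $v_{i-1}\to v_i$ collide. The symmetric statement for $v_0>v_1$ follows by reversing the induced order and rerunning the same argument.
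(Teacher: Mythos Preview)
Your proof is correct. The argument differs from the paper's in a structurally interesting way, so a brief comparison is worthwhile.

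The paper proves the monotonicity claim and the ``moreover'' claim \emph{together} by a single induction: to show $v_i>v_{i-1}$, it splits on whether $v_i<v_0$ or $v_0\le v_i<v_{i-1}$, and in the second case it invokes the inductive ``moreover'' hypothesis (that every $u$ with $v_0\le u<v_{i-1}$ is already observed) to conclude $\rd(v_i)\le k-1$, a contradiction. The two statements are thus entangled in the induction.

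You instead decouple them. Your monotonicity step is purely local: if $v_i<v_{i-1}$, sandwiching forces $v_{i-2}\sim v_i$, and then the timing constraints from the two consecutive forces $v_{i-2}\to v_{i-1}$ and $v_{i-1}\to v_i$ collide. This avoids any appeal to the ``moreover'' statement and makes the induction on $i$ self-contained. You then prove the ``moreover'' part afterward by locating each $u$ in the interval $[v_{j-1},v_j)$ and applying the timing principle to the force $v_{j-1}\to v_j$; your handling of the boundary case $j=i$ (observing that $v_{i-1}\notin S$ since it is itself a force target, so $v_{i-1}\to v_i$ is a zero force and the strict inequality applies) is exactly the care the argument needs.

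What each buys: the paper's joint induction is slightly shorter once one sees to combine the claims, but the coupling makes the base case $i=2$ a bit delicate to read. Your version is more modular and makes explicit the single mechanism (sandwiching plus timing) that drives both parts.
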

\bpf
Both statements are proved together by induction assuming $v_0<v_1$. If $i=1$, then there is nothing to prove.  %
Now suppose $i\ge 2$ (so $\rd(v_i)\ge 2$) and the statement is true for $i-1$. %
That is, $v_1< v_2<\dots<v_{i-1}$ and  $u\in P^{[k-1]}(S)$ for all $u$ such that $v_0\le u\le v_{i-1}$.  Suppose to the contrary that $v_i < v_{i-1}$.  If $v_0\le v_i$,  then $v_0\le v_i<v_{i-1}$ implies $v_i\in P^{[k-1]}(S)$, contradicting $\rd(v_i)=k$. 
Suppose $v_i<v_0$. Then $v_i\in N(v_{i-1})$ and $v_i<v_0<v_{i-1}$ imply $v_i\in N(v_0)$  by Observation \ref{o:UI-consec}.
Since $v_0\to v_1$ in round $k'\le k-1$,  $v_i\in P^{[k-2]}(S)$, contradicting $\rd(v_i)=k$.  Thus $v_i > v_{i-1}$.  Since $v_{i-1}\to v_i$ in round $k$, 
every other neighbor of $v_{i-1}$ is in $P^{[k-1]}(S)$, i.e., $ v_{i-1}\le u< v_i$ implies $u\in P^{[k-1]}(S)$.  
\epf

\begin{lem}
 \label{unit-int-gr-lem-1}
Let $G$ be a connected unit interval graph with initial power dominating set $S$. Then $|P^{(k)}(S)| \le 2|S|$ for every $k \ge 2$.
 \end{lem}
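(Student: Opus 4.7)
The plan is to count $|P^{(k)}(S)|$ by tracing each round-$k$ observed vertex back along its unique forcing chain to its ancestor in $S$, then using the monotonicity provided by Lemma \ref{consec-nhbd-direct-force} to bound how many such chains can share a given ancestor. First I would fix an arbitrary set $\F$ of forces witnessing $S$ as a power dominating set. Each observed vertex $w \notin S$ has a unique incoming force in $\F$, so iterating backward produces a unique forcing chain $s = u_0 \to u_1 \to \cdots \to u_m = w$ rooted at some $s \in S$, whose first step is a domination force and whose remaining steps are zero forces. For $w \in P^{(k)}(S)$ with $k \ge 2$ one has $w \notin N[S]$ (otherwise $\rd(w) = 1$), which forces $u_1 \neq w$ and hence $m \ge 2$.

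Since the chain has length $m \ge 2$, Lemma \ref{consec-nhbd-direct-force} implies that it is monotone in the induced order: either $s < u_1 < \cdots < u_m = w$ (call it right-going) or $s > u_1 > \cdots > u_m = w$ (left-going). The key step is then to show that each $s \in S$ is the root of at most one right-going chain ending in $P^{(k)}(S)$ and at most one left-going chain ending in $P^{(k)}(S)$. Suppose for contradiction that two right-going chains $s \to u_1 \to \cdots \to u_m = w$ and $s \to u_1' \to \cdots \to u_{m'}' = w'$ from the same $s$ ended at distinct vertices $w, w' \in P^{(k)}(S)$, without loss of generality with $w < w'$. Applying the ``moreover'' clause of Lemma \ref{consec-nhbd-direct-force} to the second chain (valid since $m' \ge 2$ and $\rd(w') = k$) yields $\rd(u) \le k-1$ for every $u$ with $s \le u < w'$; taking $u = w$, which satisfies $s < w < w'$, contradicts $\rd(w) = k$. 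The left-going case is identical after reversing the induced order.

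Combining these, each $s \in S$ contributes at most two vertices to $P^{(k)}(S)$, giving $|P^{(k)}(S)| \le 2|S|$. The restriction $k \ge 2$ enters at two points (the chain must have length at least $2$, and the ``moreover'' clause of the lemma requires $i \ge 2$); in round $1$ a single $s$ can dominate its entire neighborhood at once, so no such bound is available. The only small technical issue to verify is that Lemma \ref{consec-nhbd-direct-force} may legitimately be applied to a chain whose opening link is a domination force rather than a zero force, but this is built into the paper's definition of a forcing chain through a general $\F$ containing both domination and zero forces, so no additional work is required.
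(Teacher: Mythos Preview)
Your proof is correct and uses essentially the same idea as the paper: both arguments invoke the ``moreover'' clause of Lemma~\ref{consec-nhbd-direct-force} to show that at most two vertices of $P^{(k)}(S)$ can be associated with each element of $S$. The only difference is bookkeeping---you group round-$k$ vertices by the root of their forcing chain (at most one right-going and one left-going terminus per root), whereas the paper groups them by which gap between consecutive elements of $S$ they fall into (at most two per gap, plus one before $s_1$ and one after $s_p$).
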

 
 \begin{proof}  Fix a unit interval representation of $G$ with induced order $<$. %
Let $S= \{s_1,s_2, \dots,$ $ s_p\}$ where $s_1 < s_2 < \dots < s_p$.     %
Suppose that $\rd(y)=k$ for some $k \ge 2$ and  $s_i < y < s_{i+1}$ for some $i$ with $1 \le i \le p-1$.   
There exists a forcing chain  $y_0\to y_1\to \dots\to y_{a-1}\to y_a =y$ with $a\le k$ and $y_0\in S$.  If $y_0<y$, then $\rd(x)\le k-1$ for all $x$ such that $y_0\le x <y$  by Lemma \ref{consec-nhbd-direct-force}, and if $y_0>y$, then then $\rd(x)\le k-1$ for all $x$ such that $y_0\ge x >y$.   Since $y_0\le s_i<y$ (or $y<s_{i+1}\le y$) there are at most two vertices in $P^{(k)}(S)$  between $s_i$ and $s_{i+1}$.  Similarly, there is at most one vertex  in $P^{(k)}(S)$  before $s_1$ and at most one vertex  in $P^{(k)}(S)$  after $s_p$.  Thus $|P^{(k)}(S)| \le 2|S|$ for every $k \ge 2$. \end{proof} 
 
 In power domination (and zero forcing), when a vertex $x$ is first observed in round $k$ (i.e., $\rd(x)=k$), it is not always the case that $x$ has a neighbor $y$ with $\rd(y) = k-1$. %
However, the next lemma shows that this must happen in a unit interval graph. %

 \begin{lem}
 \label{unit-int-gr-lem-2}
 If $G$ is a connected unit interval graph with power dominating set $S$,  then for each $k \ge 1$, every vertex in $P^{(k)}(S)$ is adjacent to a vertex in $P^{(k-1)}(S)$.
 \end{lem}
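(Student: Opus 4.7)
The plan is to analyze the vertex $u \in N(y)$ that forces $y$ in round $k$, combining the consecutive-neighborhood property (Observation \ref{o:UI-consec}) with the monotonicity of forcing chains from Lemma \ref{consec-nhbd-direct-force}. The base case $k=1$ is immediate since $y \in P^{(1)}(S) = N[S] \setminus S$ has a neighbor in $S = P^{(0)}(S)$. For $k \ge 2$, let $u$ be a vertex that forces $y$ in round $k$; then $\rd(u) \le k-1$, and if $\rd(u) = k-1$ we are done, so suppose $\rd(u) \le k-2$.

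Since $u$ did not force $y$ in round $k-1$, $u$ must have had at least one unobserved neighbor $w \ne y$ at the end of round $k-2$. Because the only unobserved neighbor of $u$ at the end of round $k-1$ is $y$, this $w$ must be observed by the end of round $k-1$, so $\rd(w) = k-1$. Without loss of generality $u < y$ in the induced order. If $w > u$, then $u$, $w$, $y$ all lie in the consecutive set $N[u]$, and since $uy \in E(G)$, the consecutive-neighborhood property directly yields $wy \in E(G)$; hence $w \in N(y) \cap P^{(k-1)}(S)$, as required.

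The main obstacle is ruling out the remaining case $w < u$. Since $u = y_{a-1} < y = y_a$, the last step of the forcing chain $y_0 \to y_1 \to \cdots \to y_{a-1} = u \to y_a = y$ from some $y_0 \in S$ is increasing, so by Lemma \ref{consec-nhbd-direct-force} the entire chain is monotonically increasing; applying that lemma to the sub-chain ending at $u$ gives $\rd(v) \le \rd(u) - 1 \le k - 3$ for every $v$ with $y_0 \le v < u$. Since $\rd(w) = k - 1$, $w$ cannot lie in $[y_0, u)$, so we must have $w < y_0$. But then $\ell(y_0) - \ell(w) < \ell(u) - \ell(w) \le 1$, so $w \in N(y_0) \subseteq N[S]$ and hence $\rd(w) \le 1$. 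Combined with $\rd(w) = k - 1$ this forces $k \le 2$, and the case $k = 2$ collapses because $\rd(u) \le k-2 = 0$ would put $u \in S$, giving $y \in N[u] \subseteq N[S]$ and thus $\rd(y) \le 1$, contradicting $\rd(y) = 2$. Thus $w < u$ is impossible, and the $w > u$ branch supplies the required neighbor of $y$ in $P^{(k-1)}(S)$.
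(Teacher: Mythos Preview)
Your argument is essentially correct but follows a different route from the paper. The paper argues by contradiction without invoking Lemma~\ref{consec-nhbd-direct-force}: assuming $x \in P^{(k)}(S)$ has no neighbor in $P^{(k-1)}(S)$, it takes the forcer $z$ of $x$ (so $\rd(z) \le k-2$), finds a neighbor $w$ of $z$ with $\rd(w) = k-1$, notes that $xw \notin E(G)$ forces $z$ to lie strictly between $x$ and $w$ in the induced order, and then observes that whichever vertex $v$ forced $z$ must also be adjacent to $x$ or to $w$ (in a unit interval graph, two neighbors of $z$ on the same side of $z$ are adjacent to each other), giving $v$ a second unobserved neighbor at the time $v \to z$. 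Your approach is more constructive: you directly exhibit the witness $w \in N(y) \cap P^{(k-1)}(S)$ in the case $w > u$, and use the forcing-chain monotonicity of Lemma~\ref{consec-nhbd-direct-force} to eliminate $w < u$. The paper's route is shorter; yours makes the neighbor explicit.

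One technical point to patch: when you apply Lemma~\ref{consec-nhbd-direct-force} to the sub-chain $y_0 \to \cdots \to y_{a-1} = u$, the ``moreover'' clause of that lemma requires the chain to have length at least $2$, i.e.\ $a-1 \ge 2$. When $a = 2$ your stated bound $\rd(v) \le \rd(u) - 1$ for $v \in [y_0, u)$ can fail if $y_0 \to u$ is a domination force, since other vertices of $N[y_0]$ may then also have round $1 = \rd(u)$. The repair is immediate: in that case $y_0 \in S$ and every $v \in [y_0, u)$ lies in $N[y_0] \subseteq N[S]$, while $\rd(u) = 1 \le k-2$ forces $k \ge 3$, so $\rd(v) \le 1 < k-1 = \rd(w)$ still rules out $w \in [y_0, u)$. (The case $a = 1$ is vacuous since then $u = y_0 \in S$ gives $\rd(y) \le 1$, which is already covered by your final sentence.) With this small adjustment your proof goes through.
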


\begin{proof}
Fix a unit interval representation of $G$ with induced order  $<$, and let $S$ be a power dominating set. The result is clearly true (for any graph) for $k=1$ and $k=2$, so we assume $k \ge 3$,  and let $\rd(x)=k$.  Assume to the contrary that $\rd(y)\ne k-1$ for all $y\in N(x)$.  Let $z$ be a neighbor of $x$ such that $z\to x$, so $ 1 \le  \rd(z) \le k-2$.  Since $z$ did not observe $x$ in round $k-1$, $z$ must have another neighbor $w$ such that $\rd(w) = k-1$.    Since $x$ has no neighbors in $P^{(k-1)}(S)$, we know $xw\not \in E(G)$.  Thus $z$ is adjacent to both $x$ and $w$, which are not adjacent to each other.   Thus $I(z)$ intersects both $I(x)$ and $I(w)$, but $I(x) \cap I(w) = \emptyset$.    Therefore, either $w<z<x$ or $x<z<w$.  In either case, any vertex $v$  such that  $v\to z$  in round $\rd(z)\le k-2$ would also have been adjacent to a second unobserved vertex ($x$ or $w$) at that time.  This is a contradiction to the rules of power domination  if $k \ge 4$.    If $k =3$ then $v \in S$ and consequently  either $x$ or $w$   is in $P^{(1)}(S)$, 
a contradiction because $x \in P^{(k)}(S) = P^{(3)}(S)$ and $w \in P^{(k-1)}(S) = P^{(2)}(S)$.
\end{proof}

For a unit interval graph $G$, fix a unit representation of  $G$ with induced order $<$ and a power dominating set $S = \{s_1,s_2,s_3,\ldots, s_p\}$ where $s_1 < s_2 < \dots < s_p$.    Let $u_i$ be the least neighbor of $s_i$ in the order (if such a neighbor exists) and similarly, let $v_i$ be the greatest neighbor of $s_i$  (if such a neighbor exists).  
This is illustrated in Figure~\ref{int-fig}.    Let $T(S) = \{u_1,u_2, \ldots, u_p\} \cup \{v_1,v_2,  \ldots , v_p\}$.    By construction, each vertex of $S$ contributes at most 2 vertices to $T(S)$, so $|T(S)| \le 2|S|$.    The next lemma shows that $T(S)$ dominates the vertices in $S \cup P^{(1)}(S) \cup P^{(2)}(S)$.

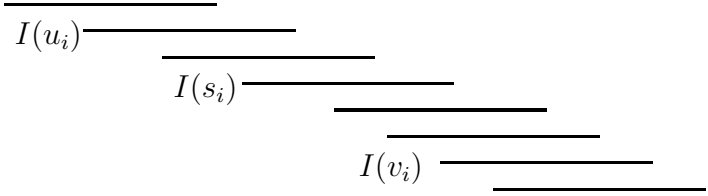
\begin{figure}
\begin{center}
 
\begin{picture}(400,80)
\thicklines

 \multiput(0,80)(30,-10){4}{\line(1,0){80}}
\put(4, 65){$I(u_i)$}
\put(64, 45){$I(s_i)$}

 \multiput(125,40)(20,-10){4}{\line(1,0){80}}
\put(134, 15){$I(v_i)$}
\end{picture}
\end{center}
\caption{Illustrating the choice of $u_i$ and $v_i$ for a given $s_i$.}
\label{int-fig}
\end{figure}

 \begin{lem}
 \label{unit-int-gr-lem-3}
 Let $G$ be a connected unit interval graph of order at least two with a fixed unit representation and induced order, and let   $S$ be  a  power dominating set of $G$.  If $T(S)$ is the subset of $ P^{(1)}(S)$ defined above, then every vertex in $S \cup  P^{(1)}(S) \cup  P^{(2)}(S)$ is  dominated by a vertex in $T(S)$.
 \end{lem}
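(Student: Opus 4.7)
My plan is to dominate $S$, $P^{(1)}(S)$, and $P^{(2)}(S)$ separately, with the first two cases being direct and the third requiring a structural claim. For $s_i \in S$, the vertex $u_i$ is by construction a neighbor (it exists because $G$ is connected of order at least two). For $y \in P^{(1)}(S)$, I would pick $s_i$ with $y \in N(s_i)$, so that $u_i \le y \le v_i$ in the induced order; Observation~\ref{o:UI-consec} then ensures that $N[u_i]$ is a consecutive set containing both $u_i$ and $s_i$, and so it contains $y$ whenever $y \le s_i$. The case $y \ge s_i$ is handled symmetrically via $v_i$.

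The substantive case is $y \in P^{(2)}(S)$. I would fix $z \in P^{[1]}(S)$ with $z \to y$. If $z \in S$ the argument reduces to the $P^{(1)}(S)$ case, so assume $z \in P^{(1)}(S)$; by symmetry in the induced order, assume further that $y < z$. The key claim I plan to establish is that \emph{every $s \in S \cap N(z)$ satisfies $s > z$.} To prove it, suppose some such $s$ has $s < z$; then $\ell(s), \ell(y) \in [\ell(z)-1, \ell(z))$, a half-open window of length less than one, forcing $|\ell(s)-\ell(y)| < 1$ and hence $y \in N(s) \subseteq N[S]$, contradicting $y \in P^{(2)}(S)$.

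Given the claim, I pick any $s_i \in S \cap N(z)$; then $s_i > z$, and since $u_i$ is the leftmost neighbor of $s_i$ and $z$ is a neighbor, $u_i \le z$. Combining $\ell(u_i) \le \ell(z)$ with $\ell(u_i) \ge \ell(s_i) - 1 > \ell(z) - 1$ and $\ell(y) \in [\ell(z)-1, \ell(z))$ yields $|\ell(u_i) - \ell(y)| < 1$, so $y \in N[u_i] \subseteq N[T(S)]$. The symmetric argument with $v_i$ handles $y > z$. The main obstacle will be the structural claim above: everything else is either straightforward use of the consecutive-neighborhood property or elementary unit-interval arithmetic, but this claim is what converts the purely combinatorial condition $y \notin N[S]$ into geometric information about the positions of the $S$-neighbors of $z$ relative to $y$.
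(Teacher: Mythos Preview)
Your proof is correct and follows the same overall architecture as the paper's: handle $S$ and $P^{(1)}(S)$ directly via the definitions of $u_i,v_i$ and the consecutive-neighborhood property, then for $w\in P^{(2)}(S)$ find a neighbor $z\in P^{(1)}(S)$ that forced $w$, locate an $s_i\in S$ adjacent to $z$ on the far side of $z$ from $w$, and use unit-interval arithmetic to show $u_i$ (or $v_i$) is adjacent to $w$.

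The one noteworthy difference is how you obtain the ordering $s_i>z>w$ (in the case $w<z$). The paper tacitly invokes Lemma~\ref{consec-nhbd-direct-force} applied to the forcing chain $s_i\to z\to w$ to conclude the chain is monotone. You instead prove the stronger statement that \emph{every} $s\in S\cap N(z)$ lies on the opposite side of $z$ from $w$, via the clean contradiction that $s<z$ and $w<z$ both adjacent to $z$ would force $|\ell(s)-\ell(w)|<1$ and hence $w\in N[S]$. This makes your argument self-contained (no appeal to the forcing-chain monotonicity lemma) at essentially no extra cost, and it lets you choose $s_i$ freely rather than tracking the specific $s_i$ from the set of forces. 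One small remark: your ``$z\in S$'' subcase is in fact vacuous, since $y\in P^{(2)}(S)$ means $y\notin N[S]$, so no element of $S$ can force $y$; but this does no harm.
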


\begin{proof}
Since we are considering only  graphs that are connected and nontrivial,  $T(S)$ dominates $S$ by construction.  Next we show $T(S)$ dominates $P^{(1)}(S)$.
By definition, any vertex $z \in P^{(1)}(S)$ is adjacent to some $s_i \in S$, so for that value of $i$ we have $I(z) \cap I(s_i) \neq \emptyset$.  If $I(z) $ contains the left endpoint $\ell(s_i)$, then $z=u_i$ or $zu_i \in E(G)$, so $z$ is either  an element of $T(S)$ or dominated by an element of $T(S)$.  The case in which $I(z) $ contains the right endpoint $r(s_i)$ is similar.  Thus  $T(S)$ dominates $P^{(1)}(S)$

Finally we show $T(S)$ dominates the vertices in $P^{(2)}(S)$.  Consider $w \in P^{(2)}(S)$.  Suppose  $s_i\to z\to w$ with $s_i > z >w$ and $\rd(z) = 1$.    By construction, $\ell(z) \ge \ell(u_i)$, so $I(u_i)$ also intersects $I(w)$, and thus $w$ is adjacent to a vertex in $T(S)$.  The case in which $s_i < z <w$ is similar.  This completes the proof.
\end{proof}

We are now ready to prove the main result of this section.

 \begin{thm}
 \label{unit-int-gr-thm}
 If $G$ is a connected unit interval graph, then $\thpdx(G) = \gamma(G)$.
 \end{thm}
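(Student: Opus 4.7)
The plan is to show $\gamma(G)\le\thpdx(G)$; combined with Observation \ref{o:basic-bds} this gives the claimed equality. I would prove the inequality by showing that for any power dominating set $S$ of $G$ with $\ptpd(G;S)=t$, one can exhibit a dominating set of $G$ of size at most $t|S|$. The construction uses only the three preceding lemmas: Lemma \ref{unit-int-gr-lem-3} gives a set $T(S)\subseteq P^{(1)}(S)$ of size at most $2|S|$ that dominates $S\cup P^{(1)}(S)\cup P^{(2)}(S)$, Lemma \ref{unit-int-gr-lem-1} gives $|P^{(k)}(S)|\le 2|S|$ for every $k\ge 2$, and Lemma \ref{unit-int-gr-lem-2} says $P^{(k-1)}(S)$ dominates $P^{(k)}(S)$ for each $k\ge 1$.

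The idea is to take every other layer of the propagation, and to split on the parity of $t$. When $t$ is even, set
\[D=T(S)\cup P^{(3)}(S)\cup P^{(5)}(S)\cup\cdots\cup P^{(t-1)}(S);\]
here $T(S)$ handles rounds $0,1,2$, and each $P^{(2k+1)}(S)$ for $1\le k\le t/2-1$ contains itself and dominates $P^{(2k+2)}(S)$ by Lemma \ref{unit-int-gr-lem-2}, so $D$ is a dominating set with $|D|\le 2|S|+(t/2-1)\cdot 2|S|=t|S|$. When $t$ is odd (including the trivial case $t=1$), using $T(S)$ as the starter overshoots by $|S|$, so I instead use $S$ itself as the lighter starter (it dominates $S\cup P^{(1)}(S)$ for free) and alternate on the even indices:
\[D'=S\cup P^{(2)}(S)\cup P^{(4)}(S)\cup\cdots\cup P^{(t-1)}(S),\]
whose size is at most $|S|+((t-1)/2)\cdot 2|S|=t|S|$, and each $P^{(2k)}(S)$ dominates $P^{(2k+1)}(S)$ again by Lemma \ref{unit-int-gr-lem-2}. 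In either case $\gamma(G)\le t|S|=\thpdx(G;S)$, and minimizing over $S$ finishes the proof.

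I do not expect any real obstacle. Once the three preceding lemmas are in hand, the domination check is a direct appeal to Lemma \ref{unit-int-gr-lem-2} and the counting is immediate from Lemma \ref{unit-int-gr-lem-1} together with the size bound on $T(S)$. The only point that took me a moment was the parity dichotomy: the symmetric plan of using $T(S)$ as starter for both parities loses a factor of $(t+1)/t$ when $t$ is odd, and the correct fix is to notice that $S$ (which already dominates two layers) is a strictly cheaper starter whenever the parity makes it available.
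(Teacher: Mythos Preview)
Your proposal is correct and matches the paper's proof essentially line for line: the paper also splits on the parity of $t$, uses $\hat S=T(S)\cup P^{(3)}(S)\cup\cdots\cup P^{(t-1)}(S)$ in the even case and $\hat S=S\cup P^{(2)}(S)\cup\cdots\cup P^{(t-1)}(S)$ in the odd case, and bounds $|\hat S|$ via Lemmas \ref{unit-int-gr-lem-1}--\ref{unit-int-gr-lem-3} exactly as you do. The only cosmetic difference is that the paper lists $t=1$ as a separate case rather than absorbing it into the odd case.
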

 
 \begin{proof}  
Let $G$ be a connected unit interval graph of order at least two with a fixed unit representation and induced order. Let   $\thpdx(G) =\thpdx(G;S)= |S|t$ where  $t = \ptpd(G;S)$.  We consider three cases:
\ben[(i)]
\item\label{ui-thm=i}  $t=1$,  
\item\label{ui-thm=ii} $t$ an even integer greater than 1, and  
\item\label{ui-thm=iii} $t$ an odd integer greater than 1.  
\een 
It suffices to show $\gamma(G) \le   \thpdx(G)  $.  %

 \eqref{ui-thm=i}:  Since $t=1$, $S$ is a dominating set  and 
 $\gamma(G) \le |S| = |S|t  = \thpdx(G)  $.     
 
 Otherwise, we may assume $t \ge 2$.     Let $T(S)$ be the set defined just before Lemma~\ref{unit-int-gr-lem-3}.
 
\eqref{ui-thm=ii}: Assume $t$ is even.  Let $\hat S = T(S) \cup P^{(3)}(S) \cup P^{(5)}(S) \cup \dots \cup P^{(t-1)}(S)$.    By Lemma~\ref{unit-int-gr-lem-3}, $T(S)$ dominates $S \cup P^{(1)}(S)\cup P^{(2)}(S)$, and by Lemma~\ref{unit-int-gr-lem-2},  the vertices in $P^{(2j)}(S)$  are dominated by the set $P^{(2j-1)}(S)$ for $2 \le j \le \frac t 2$.  Thus $\hat S$ is a dominating set for $G$ and  $ |\hat S| = |T(S)| + |P^{(3)}(S)|+|P^{(5)}(S)|+ \dots + | P^{(t-1)}(S)|$.    By Lemma~\ref{unit-int-gr-lem-1}, $|P^{(k)}(S)| \le 2|S|$ for every $k \ge 2$, and as we noted just before Lemma~\ref{unit-int-gr-lem-3}, $|T(S)| \le 2|S|$.  Thus  
{\small \[\gamma(G) \le   |\hat S| = |T(S)| + |P^{(3)}(S)|+|P^{(5)}(S)|+ \dots + | P^{(t-1)}(S)| \le (2|S|) \frac t 2 = |S|t = \thpdx(G).\]} 
 
\eqref{ui-thm=iii}: Assume $t$ is odd.  Let $\hat S = S \cup P^{(2)}(S) \cup P^{(4)}(S) \cup \dots \cup P^{(t-1)}(S)$.   The vertices in $P^{(1)}(S)$ are dominated by   $S$ by definition, and   the vertices in $P^{(2j+1)}(S)$ are dominated by the set $P^{(2j)}(S)$ for $1 \le j \le \frac{t-1}2$ by  Lemma~\ref{unit-int-gr-lem-2}.   
 Thus  $\hat S$ is a dominating set for $G$ and $\gamma(G)\le \thpdx(G)$ as in case  \eqref{ui-thm=ii}.
  \end{proof} 
 
 We observe that the domination number of a connected unit interval graph can be found from a unit interval representation of $G$ using the following greedy algorithm.  Let $G = (V,E)$ be a unit interval graph where  $V = \{v_1, v_2,  \dots,v_n\} $,  interval $I(v_i)$  is assigned to vertex $v_i$ for each $i$, and,  $\ell(v_1) < \ell(v_2) < \dots<\ell(v_n)$.   Start with $S = \emptyset$ and add $v_k$ to $S$ where $k$ is maximum so that $I(v_1) \cap I(v_k) \neq \emptyset$.  Now remove $v_k$ and its neighbors from $G$ and iterate.  This produces a dominating set for $G$. More generally, the dominating number of  interval graphs (and several related graph classes) can be  computed in polynomial time \cite{Farber84}.

Theorem \ref{unit-int-gr-thm} need not be true for interval graphs in general, as shown by the next example.

\begin{ex} Let $G$ be the graph shown in Figure \ref{f:interval}.  Then $\gamma(G)=3$ and $\thpdx(G)=\thpdx(G;\{3\})=1\cdot 2=2$.  Observe that $I(1)=[0,3]$, $I(2)=[2,5]$, $I(3)=[4,9]$, $I(4)=[8,11]$, $I(5)=[10,13]$, $I(6)=[6,7]$ is an interval representation of $G$.  Furthermore, $G$ is not a unit interval graph since $G[\{2,3,4,6\}]$ is a $K_{1,3}$, which is prohibited for a unit interval graph.
\end{ex}
\begin{figure}[!h]
\centering

\begin{tikzpicture}[scale=.8]
\begin{scope}[very thick, every node/.style={sloped,allow upside down}]
\vertex (A) at (0,0)[label=above right:$3$] {};
\vertex (B) at (-1.5, -0.75)[label=above:$2$] {};
\vertex (C) at (-3, -1.55)[label=above:$1$]{};
\vertex (D) at (1.5, -0.75)[label=above:$4$]{};
\vertex (E) at (3,-1.55)[label=above:$5$]{};
\vertex (F) at (0,1.75)[label=above:$6$]{};
\draw(A) to (B);
\draw(A) to (D);
\draw(A) to (F);
\draw(C) to (B);
\draw(D) to (E);
\end{scope}
\end{tikzpicture}\\
\caption{An interval graph  $G$ with $\thpdx(G)<\gamma(G)$.    \label{f:interval}}
\end{figure}

\section{Cartesian products}\label{s:Cart-prod}

The \emph{Cartesian product} $G\Box H$ of graphs $G$ and $H$ is the graph whose vertex set is $V(G\Box H) = V(G) \times V(H)$ where two vertices $(x_1,y_1)$ and $(x_2,y_2)$ are adjacent in $G \Box H$ if either $x_1=x_2$ and $y_1y_2\in E(H)$ or $y_1=y_2$ and $x_1x_2\in E(G)$. In this section we provide bounds on the product power throttling number of Cartesian products.  We show that the product power throttling number equals the domination number for some families of Cartesian products, including  grid graphs (on the plane, cylinder, and torus) and Cartesian products of complete graphs with complete graphs; we also exhibit examples of Cartesian products where the product power throttling number does not equal the domination number.

\subsection{Bounds} 

We begin with upper bounds.  We know that $\thpdx(G \Box H) \leq \gamma(G \Box H)$ and\break $\thpdx(G \Box H) \leq \gamma_P(G \Box H) \ptpd(G \Box H)$ by Observation~\ref{o:basic-bds}.  The next result uses the structure of a Cartesian product to obtain additional upper bounds.

\begin{thm}\label{Cartesian-thm}
For any connected graphs $G$ and $H$, \[\thpdx(G \Box H) \leq \thpdx(G)|V(H)|\ \mbox{ and }\ \thpdx(G \Box H) \leq  \thpdx(H)|V(G)|.\]
\end{thm}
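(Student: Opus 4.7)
My plan is to exploit the product structure by lifting an optimal set for product power throttling on $G$ to $G \Box H$.  By the commutativity of the Cartesian product (up to isomorphism), it suffices to establish the first inequality.  Let $S \subseteq V(G)$ realize $\thpdx(G) = |S|\,\ptpd(G;S)$, and set
\[
S' = S \times V(H), \qquad |S'| = |S|\,|V(H)|.
\]
If I can show $\ptpd(G \Box H; S') \le \ptpd(G;S)$, the bound follows immediately, since
\[
\thpdx(G \Box H) \le |S'|\,\ptpd(G \Box H; S') \le |S|\,|V(H)|\,\ptpd(G;S) = \thpdx(G)\,|V(H)|.
\]

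The central claim I will prove is that observation from $S'$ in $G \Box H$ proceeds identically and in parallel across the $H$-fibers: for every $k \ge 0$,
\[
P^{[k]}_{G \Box H}(S') = P^{[k]}_G(S) \times V(H).
\]
Taking $k = \ptpd(G;S)$ then forces the right-hand side to equal $V(G) \times V(H) = V(G \Box H)$, giving the desired inequality on propagation time.  I will prove the claim by induction on $k$.  The base case $k=0$ is the definition of $S'$, and $k=1$ uses only that $N_{G \Box H}[S \times V(H)] = N_G[S] \times V(H)$, because every $H$-direction neighbor $(s,w')$ of an element $(s,w) \in S'$ already lies in $S'$.

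For the inductive step (assuming the identity for $k-1$), I will take an arbitrary $(u,w) \in P^{[k-1]}_G(S) \times V(H)$ and observe that all of its $H$-direction neighbors $(u,w')$ are already in $P^{[k-1]}_G(S) \times V(H)$ by the inductive hypothesis.  Consequently, the unobserved neighbors of $(u,w)$ are exactly those of the form $(y,w)$ with $y \in N_G(u) \setminus P^{[k-1]}_G(S)$.  Thus $(u,w)$ can zero-force a unique unobserved neighbor in $G \Box H$ precisely when $u$ zero-forces in round $k$ of the propagation in $G$, and the resulting force stays in the fiber $V(G) \times \{w\}$.  This yields $P^{(k)}_{G \Box H}(S') = P^{(k)}_G(S) \times V(H)$ and completes the induction.

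The main obstacle, and the step where I would be most explicit, is ruling out interaction between fibers: I must verify that the $H$-direction edges of $G \Box H$ never create a new force outside some fiber and never block a force within one.  Both issues are handled by the single observation that whenever $u$ has been observed in $G$, every lift $(u,w')$ has been observed in $G \Box H$; in particular no $H$-direction neighbor of an already-observed vertex can serve as its unique unobserved neighbor.  Once this is clearly in place, the induction is routine and the bound $\thpdx(G \Box H) \le \thpdx(G)\,|V(H)|$ drops out.
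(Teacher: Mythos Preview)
Your proposal is correct and follows essentially the same approach as the paper: lift an optimal $S$ for $G$ to $S' = S \times V(H)$ and use $\ptpd(G \Box H; S') \le \ptpd(G;S)$. The paper simply asserts that $S'$ power dominates by running each copy of $S$ simultaneously, whereas you supply the detailed inductive justification $P^{[k]}_{G \Box H}(S') = P^{[k]}_G(S) \times V(H)$; this is the same idea made rigorous.
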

\begin{proof}
 Choose a set $S$ such that $\thpdx(G; S) = \thpdx(G)$, which implies that $ \thpdx(G)=|S| \ptpd(G; S) $. Let $S' = S \times V(H)$; that is, $S'$ is the set of vertices associated with $S$ in each copy of $G$. Since $S'$ will power dominate $G \Box H$ using each copy of $S$ simultaneously, $S'$ is a power dominating set of $G \Box H$ and $\ptpd(G \Box H; S') \le \ptpd(G; S)$. Thus, 
\[\thpdx(G \Box H) \leq |S'| \ptpd(G \Box H; S') = |S| |V(H)| \ptpd(G; S) = \thpdx(G) |V(H)|.
\]
Similarly, $\thpdx(G \Box H) \leq \thpdx(H)|V(G)|$. 
\end{proof}

The Cartesian product  in the next example  achieves one of the bounds in Theorem~\ref{Cartesian-thm} that is less than  $\gamma(G \Box H)$ and  $\gamma_P(G \Box H) \ptpd(G \Box H)$. 

\begin{ex} 
Let $G=S(7,2,2,2,2,2)$ as shown in Figure~\ref{f:s722222}. Consider the graph $G\Box P_2$. We  show that  %
$\thpdx(G \Box P_2) = \thpdx(G)|V(P_2)|=8<\gamma(G \Box P_2)=10<\pd(G \Box P_2) \ptpd(G \Box P_2)=14$.   

 We compute $\gamma(G \Box P_2)=10$ and     $\pd(G\Box P_2)=2$  \cite{sage}.   Recall that $\thpdx(G)=4$ was established in Example \ref{e:spider} and $|V(P_2)|=2$, so $8=\thpdx(G)|V(P_2)|\ge \thpdx(G \Box P_2)$.
To show that $\thpdx(G \Box P_2)= 8$,  by Proposition \ref{r:comp-bd-thpdx}\eqref{r:comp-bd-thpdx-2}  we need consider only power dominating  sets  $S$ such that  $2\le |S|\le\frac 8 2$, and since $4<\gamma(G \Box P_2)$, we know $\thpdx(G \Box P_2,4)=8$.   We  compute $\ppt(G\Box P_2,2)=7$ \cite{sage}, which implies\break $\pd(G \Box P_2) \ptpd(G \Box P_2)=14$, and  $\ppt(G\Box P_2,3)=4$ \cite{sage}, so  $\thpdx(G \Box P_2,3)=12$.
\end{ex}

Next we construct an example of a Cartesian product  that has  power product throttling number less than  $\gamma (G\Box H)$,  $\gamma_P(G\Box H)\ptpd(G\Box H)$, and   the bounds in Theorem~\ref{Cartesian-thm}.  

\begin{figure}[h]
\begin{center}
\begin{tikzpicture}[scale=.7]
\begin{scope}[very thick, every node/.style={sloped,allow upside down}]

\vertex(A) at (0,5)[label=above:$w$] {};
\vertex(B) at (0, 3) [label=left:$u_1$]{};
\vertex(C) at (-0, 4)[label=left:$v_1$] {};
\vertex(D) at (1, 3) [label=right:$u_2$]{};
\vertex(E) at (1, 4) [label=right:$v_2$]{};
\vertex(F) at (2, 2) [label=above:$u_3$]{};
\vertex(G) at (3, 2)[label=above:$v_3$] {};
\vertex(H) at (-1, 2) [label=above:$u_8$]{};
\vertex(I) at (-2, 2)[label=above:$v_8$] {};
\vertex(J) at (-1, 1 )[label=below:$u_7$] {};
\vertex(K) at (-2, 1 )[label=below:$v_7$] {};
\vertex(L) at (2, 1 ) [label=below:$u_4$]{};
\vertex(M) at (3, 1 )[label=below:$u_4$] {};
\vertex(N) at (-0,0 )[label=left:$u_6$] {};
\vertex(O) at (-0,-1 ) [label=left:$v_6$]{};
\vertex(P) at (1,-0 ) [label=right:$u_5$]{};
\vertex(Q) at (1,-1 ) [label=right:$v_5$] {};

\draw(A) to (C);
\draw(C) to (B);
\draw(B) to (D);
\draw(D) to (E);
\draw(D) to (F);
\draw(F) to (L);
\draw(L) to (P);
\draw(P) to (N);
\draw(N) to (J);
\draw(J) to (H);
\draw(H) to (B);

\draw(F) to (G);
\draw(L) to (M);
\draw(P) to (Q);
\draw(N) to (O);
\draw(J) to (K);
\draw(H) to (I);

\end{scope}
\end{tikzpicture}
\end{center}
\caption{The graph $G$ in Example \ref{ex-cp<bd}.}\label{graphW}
\end{figure}
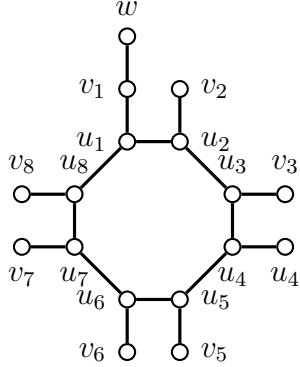
\begin{ex}\label{ex-cp<bd}Let $G$ be the graph in Figure~\ref{graphW} with vertex set $\{u_1, u_2, \ldots, u_8\}\cup\{v_1, v_2, \ldots, v_8\}\cup \{w\} $
where the induced subgraph on $\{u_1, u_2, \ldots, u_8\}$ is an 8-cycle, $v_i$ is adjacent to $u_i$ for $1\leq i\leq 8$, and $w$ is adjacent to $v_1$. Thus $G$ is an 8-cycle with a path of length 2 appended to $u_1$ and a leaf appended to $u_j$  for $2\leq j\leq 8$. Note that $G$ has 17 vertices. We denote the vertices of $G \Box P_2$ by $\{u_1, \dots, u_8,v_1,  \dots, v_8,w,u'_1, \dots, u'_8,$ $v'_1,  \dots, v'_8,w'\} $.

To show that $\thpdx(G \Box P_2)= 10$, note first that $\ppt(G \Box P_2,\{u_1,u_5,u_1',u_3',u_7'\})=2$, so $\thpdx(G \Box P_2)\le 10$.   To show the reverse inequality, we use \cite{sage} to compute $\gamma(G \Box P_2)=11$ and  $\pd(G\Box P_2)=3$. Since $\thpdx(G \Box P_2)\le 10<\gamma(G \Box P_2)$, we need consider only power dominating  sets  $S$ such that  $3\le |S|\le \frac {10} 2$ by Proposition \ref{r:comp-bd-thpdx}\eqref{r:comp-bd-thpdx-2}.   We compute    $\ppt(G\Box P_2,3)=7$ so $\pd(G \Box P_2) \ptpd(G \Box P_2)=\thpdx(G \Box P_2,3)=21$, and $\ppt(G\Box P_2,4)=3$  so $\thpdx(G \Box P_2,4)=12$. Since $5<\gamma(G \Box P_2)$, $\thpdx(G \Box P_2,5)\ge 10$.

   Since $\pd(G)=3$, $\ppt(G)=2 $, and $\gamma(G)=8$, we have $\thpdx(G)=6$ by Proposition \ref{r:comp-bd-thpdx}\eqref{r:comp-bd-thpdx-2}. Thus $\thpdx(G)|V(P_2)|=12$ and $\thpdx(P_2)|V(G)|=17$.
\end{ex}

As with upper bounds,  the structure of a Cartesian product gives additional lower bounds.  

\begin{obs}\label{o:lowerbd-cp} For connected graphs $G$ and $H$, $\thpdx(G \Box H) \geq \lc \frac{|V(G)||V(H)|}{\Delta(G) + \Delta(H) + 1} \rc$ by Corollary~\ref{t:lowerbd}  and the fact $V(G \Box H) = |V(G)||V(H)|$ and $\Delta(G \Box H) = \Delta(G) + \Delta(H)$. 
\end{obs}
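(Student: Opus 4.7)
The proof is essentially a direct substitution into Corollary~\ref{t:lowerbd}, so the plan is short. The overall strategy is to apply the lower bound $\thpdx(G') \ge \lc |V(G')|/(\Delta(G')+1)\rc$ from Corollary~\ref{t:lowerbd} to the graph $G' = G \Box H$, and then translate $|V(G')|$ and $\Delta(G')$ back into invariants of $G$ and $H$.

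First I would record the two standard facts about Cartesian products that make the substitution work. The vertex-set identity $|V(G \Box H)| = |V(G)|\cdot|V(H)|$ is immediate from the definition $V(G \Box H) = V(G) \times V(H)$. The maximum-degree identity $\Delta(G \Box H) = \Delta(G) + \Delta(H)$ follows from the edge rule of the Cartesian product: a vertex $(x,y)$ has a neighbor for each $G$-neighbor of $x$ (with second coordinate fixed at $y$) and for each $H$-neighbor of $y$ (with first coordinate fixed at $x$), so $\deg_{G\Box H}(x,y) = \deg_G(x) + \deg_H(y)$. Taking the maximum over $(x,y)\in V(G)\times V(H)$ splits as a sum of the two separate maxima, giving $\Delta(G\Box H) = \Delta(G)+\Delta(H)$.

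Second, since $G$ and $H$ are connected, $G \Box H$ is connected, so Corollary~\ref{t:lowerbd} applies and yields
\[
\thpdx(G \Box H) \;\ge\; \left\lceil \frac{|V(G\Box H)|}{\Delta(G\Box H)+1} \right\rceil.
\]
Substituting the two identities above produces the desired bound
\[
\thpdx(G \Box H) \;\ge\; \left\lceil \frac{|V(G)|\,|V(H)|}{\Delta(G)+\Delta(H)+1} \right\rceil,
\]
completing the proof. There is no real obstacle here; the only thing to be careful about is noting connectedness of $G \Box H$ so that Corollary~\ref{t:lowerbd} is applicable, which holds automatically when both $G$ and $H$ are connected.
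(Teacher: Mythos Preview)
Your proposal is correct and matches the paper's approach exactly: the observation is justified inline by applying Corollary~\ref{t:lowerbd} to $G\Box H$ and substituting the standard Cartesian-product identities for order and maximum degree. Your added remark about connectedness of $G\Box H$ is a reasonable care point but introduces nothing new.
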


We need a preliminary result about power dominating sets.   If $G \Box H$ is a Cartesian product of graphs $G$ and $H$ and $S\subset V(G\Box H)$, define the \emph{projection of $S$ onto $G$}, denoted by $S_G$, to be $S_G = \{ x : (x, y) \in S \mbox{ for some }y\in V(H) \}$.

\begin{prop} \label{cartesian-prod-proj}
Let  $G$ and $H$ be connected graphs and let $S$ be a power dominating set of $G \Box H$.  Then $S_G$ is a power dominating set of $G$.  Furthermore, $\ppt(G;S_G)\le \ppt(G\Box H;S)$. \end{prop}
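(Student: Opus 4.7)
The plan is to establish by induction on $k \ge 0$ the projection containment
\[
\{x \in V(G) : (x,y) \in P^{[k]}(S)\ \text{for some}\ y \in V(H)\} \ \subseteq\ P^{[k]}(S_G).
\]
Once this is proved, setting $k = \ppt(G \Box H; S)$ gives $V(G) \subseteq P^{[k]}(S_G)$, which simultaneously yields that $S_G$ is a power dominating set of $G$ and that $\ppt(G; S_G) \le \ppt(G \Box H; S)$, so both assertions of the proposition are handled in one stroke.

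The base case $k = 0$ is immediate since $P^{[0]}(S) = S$ projects exactly to $S_G = P^{[0]}(S_G)$. For the domination step $k = 0 \to k = 1$, I would note that any $(x,y) \in N_{G \Box H}[S]$ is either in $S$ (so $x \in S_G$) or is adjacent to some $(u,v) \in S$; by the Cartesian-product adjacency rule, the latter means either $u = x$ (so $x \in S_G$) or $v = y$ with $ux \in E(G)$ (so $x \in N_G(u) \subseteq N_G[S_G] = P^{[1]}(S_G)$).

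For the inductive step $k \to k+1$ with $k \ge 1$, take $x$ with $(x,y) \in P^{[k+1]}(S)$. If $(x,y) \in P^{[k]}(S)$, the hypothesis gives $x \in P^{[k]}(S_G) \subseteq P^{[k+1]}(S_G)$, so assume $(x,y) \in P^{(k+1)}(S)$ is forced by some $(u,v) \in P^{[k]}(S)$ whose only unobserved $G \Box H$-neighbor is $(x,y)$. Adjacency in $G \Box H$ splits into two cases. If the force is along an $H$-edge, i.e.\ $u = x$ and $vy \in E(H)$, then $x = u \in P^{[k]}(S_G)$ by induction and we are done. If the force is along a $G$-edge, i.e.\ $v = y$ and $ux \in E(G)$, then every $u' \in N_G(u) \setminus \{x\}$ satisfies $(u', y) \in P^{[k]}(S)$ (otherwise $(u,y)$ would have a second unobserved neighbor), so $u' \in P^{[k]}(S_G)$ by induction; together with $u \in P^{[k]}(S_G)$ this shows the set of unobserved $G$-neighbors of $u$ under the $S_G$-process is contained in $\{x\}$. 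Hence either $x$ is already in $P^{[k]}(S_G)$, or $u$ zero-forces $x$ in round $k+1$ of the $G$-process; in either subcase $x \in P^{[k+1]}(S_G)$.

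The main subtlety is the $H$-direction force case: such a force corresponds to no actual force in the $G$-process, and one must check it does no damage. The inductive hypothesis resolves it cleanly, because the source $(u,v)$ had its first coordinate already observed in $G$ by round $k$, so no fresh observation of $x = u$ is needed. By contrast, $G$-direction forces require the ``unique unobserved neighbor'' hypothesis to survive projection, which it does precisely because the observed $H$-neighbors of $(u,y)$ impose no constraints on the $G$-process and the observed $G$-neighbors project to observed vertices in $G$ via induction.
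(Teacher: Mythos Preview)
Your proof is correct. The induction on rounds, with the two-case split on whether a force runs along an $H$-edge or a $G$-edge, is airtight; the $H$-edge case is handled exactly right (the first coordinate was already observed in $G$), and in the $G$-edge case you correctly extract the ``unique unobserved neighbor'' condition in $G$ from that in $G\Box H$.

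Your route is genuinely different from the paper's. The paper does not induct on rounds directly; instead it introduces the auxiliary set $S' = S_G \times V(H)$, observes $S \subseteq S'$, and uses two facts: (a) monotonicity (a superset of a power dominating set is power dominating, with no larger propagation time), and (b) the symmetry of $S'$ across $H$-copies, which forces the $G\Box H$-process started from $S'$ to run identically in every copy of $G$, so that $\ppt(G;S_G)=\ppt(G\Box H;S')\le\ppt(G\Box H;S)$. The paper's argument is shorter and exploits a reusable structural observation about ``column'' sets in Cartesian products, but it leans on the monotonicity of propagation time under enlarging the initial set, which it does not prove. Your argument is more elementary and self-contained: it never leaves the original set $S$, requires no monotonicity lemma, and makes the projection mechanism fully explicit.
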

\bpf
Let $S' = S_G \times V(H)$ and note that $S \subseteq S'$. Then $S'$ is a power dominating set of $G \Box H$ since $S$ is a power dominating set of $G \Box H$.  For a (propagating) set of forces, all forces for $S'$ in $G \Box H$ have the form $(x_1,y)\to (x_2,y)$, and $\rd(x,y)=\rd(x,z)$ for all $x\in V(G)$ and $y,z\in V(H)$. For $x\in V(G)$ and $y\in V(H)$,  note that $\rd(x)=\rd(x,y)$ starting with $S'_G=S_G$ in $G$ and $S'$ in $G \Box H$.  %
Thus, $S_G$ is a  power dominating set of $G$ and $\ppt(G;S_G)=\ppt(G\Box H;S')\le \ppt(G\Box H;S)$.
\epf

\begin{thm} \label{cartesian-prod}
For any connected graphs $G$ and $H$, \[\thpdx(G \Box H) \geq  \thpdx(G)\mbox{ and }\thpdx(G \Box H) \geq  \thpdx(H).\]
\end{thm}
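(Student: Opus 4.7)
The plan is to use Proposition \ref{cartesian-prod-proj} directly. Let $S$ be a set realizing $\thpdx(G\Box H)$, so $\thpdx(G\Box H)=|S|\,\ptpd(G\Box H;S)$, and consider the projection $S_G\subseteq V(G)$. By Proposition \ref{cartesian-prod-proj}, $S_G$ is a power dominating set of $G$ with $\ptpd(G;S_G)\le \ptpd(G\Box H;S)$.

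The cardinality side is easy: by definition of the projection, each vertex $x\in S_G$ arises from at least one vertex $(x,y)\in S$, so $|S_G|\le |S|$. Combining the two inequalities,
\[
\thpdx(G)\;\le\; |S_G|\,\ptpd(G;S_G)\;\le\; |S|\,\ptpd(G\Box H;S)\;=\;\thpdx(G\Box H).
\]
The symmetric argument, using the projection $S_H=\{y:(x,y)\in S\text{ for some }x\in V(G)\}$ and the analog of Proposition \ref{cartesian-prod-proj} (which holds by interchanging the roles of $G$ and $H$ in the Cartesian product), gives $\thpdx(H)\le \thpdx(G\Box H)$.

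There is no real obstacle here beyond invoking the already-established propagation comparison; the only thing to be careful about is that the propagation-time inequality in Proposition \ref{cartesian-prod-proj} is exactly what is needed to translate a throttling bound on $G\Box H$ into one on $G$, and that the projection cannot increase the size of the set.
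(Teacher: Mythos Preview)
Your proof is correct and follows essentially the same approach as the paper: choose an optimal $S$ for $G\Box H$, apply Proposition~\ref{cartesian-prod-proj} to get $\ptpd(G;S_G)\le \ptpd(G\Box H;S)$, note $|S_G|\le |S|$, and combine. The paper's argument is virtually identical, concluding the $H$ case with ``similarly'' just as you invoke symmetry.
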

\begin{proof}
 Choose a set $S$ such that $\thpdx(G \Box H; S) = \thpdx(G \Box H)$. Then $S_G$ is a power dominating set of $G$ and $\ptpd(G; S_G) \leq \ptpd(G \Box H; S)$ by Proposition \ref{cartesian-prod-proj}. Since  $|S_G| \leq |S|$,  \[\thpdx(G) \leq |S_G|\ptpd(G; S_G) \leq |S|\ptpd(G\Box H; S) = \thpdx(G \Box H).\] 
The proof that $\thpdx(G \Box H) \geq \thpdx(H)$ is similar.
\end{proof}

\subsection{Families having  $\thpdx(G\Box H)= \gamma(G\Box H)$}
In this section we show that the product power throttling number equals the domination number for  Cartesian products of complete graphs with complete graphs, paths with paths (grid graphs), paths with cycles, and cycles with cycles. %

\begin{prop} For $2\le n\le m$, $ \pd(K_n \Box K_m) =n-1$. For $1\le n\le m$, $ \thpdx(K_n \Box K_m) =\gamma(K_n \Box K_m)=n$. \end{prop}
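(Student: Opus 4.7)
The plan is to establish the three claims in sequence, handling $\gamma$ first, then $\pd$, and then deriving $\thpdx$ as a corollary of Proposition \ref{r:comp-bd-thpdx}.

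First I would compute $\gamma(K_n\Box K_m)=n$ for $1\le n\le m$. Think of $V(K_n\Box K_m)$ as the cells of an $n\times m$ chessboard, where a cell dominates exactly its row and column. If a set $S$ of $k$ cells uses $r$ distinct rows and $c$ distinct columns, it dominates $rm+(n-r)c$ cells; setting this equal to $nm$ forces $(n-r)(m-c)=0$, so $r=n$ (hence $k\ge n$) or $c=m$ (hence $k\ge m\ge n$). The upper bound $\gamma\le n$ comes from placing one cell in each row, all in different columns, which is possible since $n\le m$.

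Next I would show $\pd(K_n\Box K_m)=n-1$ for $2\le n\le m$. For the upper bound, take $S=\{(i,i):1\le i\le n-1\}$. After the domination step, the observed set is exactly rows $1,\dots,n-1$ together with columns $1,\dots,n-1$, leaving only $(n,n),(n,n+1),\dots,(n,m)$ unobserved. Then for each $j\ge n$, the vertex $(1,j)$ (if $j>n$) or $(1,n)$ has its entire row observed and its column observed except for $(n,j)$, so it forces $(n,j)$; thus $\ptpd(K_n\Box K_m;S)\le 2$ and $\pd\le n-1$. For the lower bound, suppose $|S|\le n-2$ and let $r,c$ be the number of distinct rows and columns spanned by $S$, so $r,c\le n-2$ and the unobserved cells after domination form an $(n-r)\times(m-c)$ subgrid with $n-r\ge 2$ and $m-c\ge 2$. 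Any observed vertex $(i,j)$ with $i$ a covered row has all of row $i$ observed, and either column $j$ is covered (no unobserved neighbors at all) or column $j$ is uncovered, in which case the unobserved cells of column $j$ are precisely those in the $n-r\ge 2$ uncovered rows, so $(i,j)$ has at least two unobserved neighbors; symmetrically for $(i,j)$ with $j$ a covered column. Hence no zero force is possible, contradicting that $S$ is a power dominating set.

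Finally I would deduce the values of $\thpdx$. For $n\ge 2$, take $b=\gamma(K_n\Box K_m)=n$; since $\pd(K_n\Box K_m)=n-1\ge n/2=b/2$, Proposition \ref{r:comp-bd-thpdx}\eqref{r:comp-bd-thpdx-1} gives $\thpdx(K_n\Box K_m)=\gamma(K_n\Box K_m)=n$. The case $n=1$ reduces to $K_1\Box K_m=K_m$, where $\gamma(K_m)=1$ forces $\thpdx(K_m)=1$ by Observation \ref{completethpdx}.

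The main obstacle is the lower bound $\pd(K_n\Box K_m)\ge n-1$; everything else is either a direct application of Proposition \ref{r:comp-bd-thpdx} or an elementary counting argument for $\gamma$. The lower bound for $\pd$ requires careful bookkeeping of which cells an arbitrary $(n-2)$-set can dominate and then verifying that the resulting uncovered rectangle is large enough to block every zero force, but once the $(n-r)\times(m-c)$ structure is identified the argument is short.
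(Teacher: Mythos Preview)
Your proof is correct, and the overall architecture matches the paper's: establish $\gamma=n$, establish $\pd=n-1$, then invoke Proposition \ref{r:comp-bd-thpdx}\eqref{r:comp-bd-thpdx-1}. The upper bound constructions for $\pd$ are essentially the same (the paper uses $\{(i,1):1\le i\le n-1\}$, you use a diagonal, but both work identically).

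The genuine difference is in the lower bound $\pd\ge n-1$. The paper argues \emph{locally}: it fixes a power dominating set $S$ that is not dominating, assumes without loss of generality that $(1,1)\in S$ and that some specific neighbor of $(1,1)$ performs the first zero force, and then traces which cells must already be observed for that single force to be legal; this forces $S$ to meet at least $n-1$ distinct rows (or $m-1$ columns in the symmetric case). Your argument is \emph{global}: with $|S|\le n-2$ the unobserved cells after domination are exactly an $(n-r)\times(m-c)$ subgrid with both sides at least $2$, and every observed cell either sees none of it or sees an entire row or column of it, so no zero force can ever fire. Your version is shorter and avoids the case analysis; the paper's version has the minor advantage of also identifying that the alternative case yields the stronger bound $|S|\ge m-1$. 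You also supply a proof that $\gamma(K_n\Box K_m)=n$, which the paper simply asserts.
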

\bpf  Let $1\le n\le m$.  %
The result $ \thpdx(K_1 \Box K_m) =1=\gamma(K_1 \Box K_m)$ is immediate, so assume $n\ge 2$.  Let $V(K_n \Box K_m)=\{(i,j):1\le i\le n, 1,\le j\le m\}$. 

Since $\{(i,1):i=1,\dots,n-1\}$ is a power dominating set, $ \pd(K_n \Box K_m) \le n-1$. We construct a power dominating set $S$ that is not a dominating set and show that $S$ must have at least $n-1$ vertices in order for step \eqref{zfproc} of the power domination process to take place.  Without loss of generality, $(1,1)\in S$ and a neighbor of $(1,1)$ performs the first zero force (the first force after the domination step).  Observe that $N((1,1))=\{(i,1):i=2,3,\dots,n\}\cup\{(1,j):j=1,3,\dots,m\}$.  Neighbors of the form $(1,j)$ all behave similarly, so suppose first $(1,2)$ performs the first zero force.  There is exactly one unobserved neighbor of $(1,2)$ after the domination step. Since  $\{(1,j):j=1,3,\dots,m\}\subset N[S]$, without loss of generality the first zero force is $(1,2) \to (2,2)$. This implies $\{(i,2):i=1,3,\dots,n\}\subset N[S]$ and $(2,2)\not\in  N[S]$.  Thus $(i,2)\not\in S$ for $i=1,\dots,n$, which implies  there exist $(i,j_i)\in S$ for $i=3,\dots,n$.  Thus $|S|\ge n-1$. If a neighbor of the form $(i,1)$ performs the first zero force, then $|S| \ge m-1$.  Thus $\pd(K_n \Box K_m) =n-1$.

Since  $\gamma(K_n \Box K_m)=n$ and  $\pd(K_n \Box K_m) =n-1\ge \frac n 2$, we have $ \thpdx(K_n \Box K_m) =\gamma(K_n \Box K_m)=n$ by Proposition \ref{r:comp-bd-thpdx}\eqref{r:comp-bd-thpdx-1}. \epf

\begin{prop}\label{completeprod}
Let $H$ be a connected graph of order $n$ and let $G = H \Box K_m$ with $m \ge \Delta(H)(n - 1)+1$.  Then $\thpdx(G) =n=\gamma(G)$.
\end{prop}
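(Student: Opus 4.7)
The plan is to prove equality by squeezing $\thpdx(G)$ between a natural dominating-set upper bound and the lower bound from Corollary \ref{t:lowerbd}, leveraging the arithmetic of the hypothesis $m \ge \Delta(H)(n-1)+1$. Note first that $|V(G)|=nm$ and $\Delta(G)=\Delta(H)+m-1$, so Corollary \ref{t:lowerbd} gives
\[
\thpdx(G)\ \ge\ \left\lceil\frac{nm}{\Delta(H)+m}\right\rceil.
\]
For the upper bound, the set $D=\{(h,k_0):h\in V(H)\}$ (fixing any $k_0\in V(K_m)$) is a dominating set of $G$: every $(h,k)$ with $k\neq k_0$ is adjacent to $(h,k_0)\in D$ in the $K_m$ fiber. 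Hence $\gamma(G)\le n$, and by Observation \ref{o:basic-bds}, $\thpdx(G)\le \gamma(G)\le n$.

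The remaining step, and the only real computation, is to verify that under the hypothesis $m \ge \Delta(H)(n-1)+1$ we have $\lceil nm/(\Delta(H)+m)\rceil = n$. The case $n=1$ is trivial since then $G=K_m$ and $\thpdx(G)=\gamma(G)=1$, so assume $n\ge 2$ (hence $\Delta(H)\ge 1$ since $H$ is connected). Writing $a=\Delta(H)$, one checks that $nm/(a+m)\le n$ always (equivalent to $0\le na$), while $nm/(a+m) > n-1$ is equivalent to $m > a(n-1)$, i.e.\ exactly the hypothesis. Thus $n-1 < nm/(a+m)\le n$, and the ceiling equals $n$.

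Combining the two bounds yields $n\le \thpdx(G)\le \gamma(G)\le n$, forcing $\thpdx(G)=\gamma(G)=n$. The main potential obstacle is purely numerical, namely verifying that the hypothesis is tight enough to push the lower bound ceiling up to $n$; this is handled by the one-line computation above. No structural analysis of power propagation in $G$ is needed, because the degree-based lower bound of Corollary \ref{t:lowerbd} already matches the trivial dominating upper bound under the given constraint on $m$.
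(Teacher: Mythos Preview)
Your proof is correct and follows essentially the same approach as the paper: bound $\thpdx(G)$ above by $n$ via the obvious dominating set $V(H)\times\{k_0\}$, and below by $\lceil nm/(\Delta(H)+m)\rceil$ via Corollary~\ref{t:lowerbd} (the paper cites the equivalent Observation~\ref{o:lowerbd-cp}), then check arithmetically that the hypothesis forces the ceiling to equal $n$. Your writeup is slightly more explicit in verifying the inequality $n-1<nm/(\Delta(H)+m)\le n$, but the argument is the same.
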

\begin{proof}
 Since $V(H)\x \{y\}$ is a dominating  set of $G$ for any $y\in V(K_m)$, we know $\gamma(G)\le n$, so
  $\thpdx(G) \leq \gamma(G)\le n$. %
It remains to show that $\thpdx(G) \ge n$. By Observation  \ref{o:lowerbd-cp}, we also know that $\thpdx(G) \geq \left \lceil \frac{nm}{m + \Delta(H)} \right \rceil$ since $\Delta(K_m) = m - 1$. Note that $\left \lceil \frac{nm}{m + \Delta(H)} \right \rceil \geq n$ since  $m \ge \Delta(H)(n - 1)+1$. Thus $\thpdx(G) \ge n$.
\end{proof}

Since %
$\Delta(C_n) = 2$ and $\Delta(P_n) = 2$, the next result follows immediately from Proposition \ref{completeprod}.

\begin{cor}
If $G = H \Box K_m$ with $H = C_n$ or $P_n$ and $m \ge 2n - 1$, then $ \thpdx(G) =n$. 
\end{cor}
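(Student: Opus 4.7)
The plan is to apply Proposition~\ref{completeprod} directly, observing that the hypothesis on $m$ has been tailored so that the maximum-degree condition in that proposition is satisfied when $H$ is a path or a cycle.

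First I would note that both $P_n$ and $C_n$ are connected graphs on $n$ vertices with $\Delta(P_n) = \Delta(C_n) = 2$ (for $n \ge 3$; the cases $n = 1, 2$ degenerate to complete graphs and can be handled separately or absorbed into the statement). Substituting $\Delta(H) = 2$ into the hypothesis of Proposition~\ref{completeprod} gives the requirement $m \ge 2(n-1) + 1 = 2n - 1$, which is precisely the hypothesis of the corollary. Hence Proposition~\ref{completeprod} applies and yields $\thpdx(G) = n = \gamma(G)$, so in particular $\thpdx(G) = n$.

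There is essentially no obstacle here; the corollary is a direct instantiation of the previous proposition with $\Delta(H) = 2$. The only care needed is to verify that the small cases ($n = 1$ gives $H = P_1 = K_1$ and the statement is trivial; $n = 2$ gives $H = P_2 = K_2$, with $\Delta(H) = 1$, for which the bound $m \ge 2n - 1 = 3$ is even stronger than needed) do not cause issues, which they do not since the relevant bound $\lceil nm/(m + \Delta(H)) \rceil \ge n$ used in the proof of Proposition~\ref{completeprod} holds a fortiori.
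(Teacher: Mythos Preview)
Your proposal is correct and matches the paper's approach exactly: the paper simply notes that $\Delta(C_n)=\Delta(P_n)=2$ and invokes Proposition~\ref{completeprod}. Your additional check of the small cases $n=1,2$ is a nice touch the paper omits.
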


Next we show that $\thpdx(P_n\square P_m)=\gamma(P_n\square P_m)$, $\thpdx(P_n\square C_m)=\gamma(P_n\square C_m)$, $\thpdx(C_n\square P_m)=\gamma(C_n\square P_m)$,  and $\thpdx(C_n\square C_m)=\gamma(C_n\square C_m)$  for  all $n \le m$.  The power domination number of a grid graph is known \cite{DH06}:  For $m\ge n\ge 1$,\vspace{-5pt}
\beq\pd(P_n\square P_m)= \begin{cases}
\lc \frac n 4\rc & \mbox{if }n\not\equiv 4\mod 8 \\
\lc \frac {n+1} 4\rc & \mbox{if  }n\equiv 4\mod 8 
\end{cases}.\label{eq:pd-grid}\vspace{-5pt}\eeq
The domination number is known exactly for only  certain values of $n$; a summary of results appears in \cite{ACIOP11grid} and are detailed later as used.  
Let $J_n=P_n$ or $C_n$ for $n\ge 3$ and $J_n=P_n$ for $n=1,2$. 
Note that  $P_n\square P_m$ is a spanning subgraph of $J_n\square J_m$, so $\gamma(J_n\square J_m)\leq \gamma(P_n\square P_m) $.  %

We orient $J_n\square J_m$ near a given vertex $x$ as a grid with $n$ rows and $m$ columns, and refer to the directions from $x$ as north, east, south, and west. When $J_n=C_n$, there is an additional edge between the nothernmost vertex and southernmost vertex of each column, and when $J_m=C_m$, there is an additional edge between the easternmost vertex and westernmost vertex of each row. 

Let $S$ be a power dominating set of $J_n\square J_m$ and let $\F$ be a set of forces of $S$. For each vertex $w$ in $P^{(2)}(S)$, $\F$ defines a forcing chain  $v_0\to v_1\to w$. Define the functions $f_1: P^{(2)}(S)\rightarrow P^{(1)}(S)$ and $f_0:  P^{(2)}(S)\rightarrow S$ by $f_1(w)=v_1$, and  by $f_0(w)=v_0$. 
By the definition of power domination, $f_1 $ is an injective function (but $f_0$ need not be injective). 
For $u\in S$, define  $Q_u= \{w\in P^{(2)}(S) : f_0(w)=u\}$. 
Limiting the size of $P^{(2)}(S)$ is a key idea for the proofs that $\thpdx(J_n\square J_m)=\gamma(J_n\square J_m)$.

\begin{prop} \label{three-of} Let $n, m\geq 4$ and let $S$ be a power dominating set of $J_n\square J_m$. There is a set of forces $\F$ of $S$ such that  $|Q_x|\leq 3$ for each $x\in S$. 
\end{prop}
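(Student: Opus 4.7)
The plan is to separate the construction of $\mathcal F$ into two independent choices: for each $w\in P^{(2)}(S)$, a forcer $f_1(w)\in P^{(1)}(S)\setminus S$ (among vertices having $w$ as unique unobserved neighbor at the end of round $1$); and for each $v_1\in P^{(1)}(S)\setminus S$, a dominator $g(v_1)\in S\cap N(v_1)$. Then $f_0(w) = g(f_1(w))$ and $Q_x = \{w : g(f_1(w))=x\}$. Fixing any valid $f_1$ and writing $F=f_1(P^{(2)}(S))$, the task reduces to finding a $g:F\to S$ with $g(v_1)\in S\cap N(v_1)$ and $|g^{-1}(x)|\le 3$ for every $x\in S$.

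The core step is a structural lemma: for each $x\in S$, the set
\[
T_x \;=\; \{v_1\in F : S\cap N(v_1) = \{x\}\}
\]
satisfies $|T_x|\le 3$. I will establish this by contradiction. Suppose $|T_x|=4$, so $x$ is interior and each of its four grid neighbors $N,E,S,W$ lies in $T_x$ and hence forces some vertex in round $2$. Any diagonal force (say $N\to NE$) is impossible: it would leave $NE\notin N[S]$, making $NE$ a white neighbor of $E$, and then either $E$ has a second white neighbor (so $E$ cannot force, contradicting $E\in T_x\subseteq F$), or $NE$ is also $E$'s unique white neighbor and the injectivity of $f_1$ forces $f_1(NE)$ to be exactly one of $N,E$, excluding the other from $F$. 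Thus all four forces must be axial: $N\to NN$, $E\to EE$, $S\to SS$, $W\to WW$, which requires $NE,NW,SE,SW\in N[S]$ but $NN,EE,SS,WW\notin N[S]$. Then $NE\in N[S]$ demands an $S$-neighbor of $NE$ from $\{N,E,NNE,NEE\}$; $N,E\notin S$ by definition of $T_x$, and $NN,EE\notin N[S]$ forbid $NNE,NEE\in S$, a contradiction.

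With the lemma, I construct $g$ iteratively. Start from any initial dominator assignment. While some $x$ has $|g^{-1}(x)|\ge 4$ (equivalently $=4$ since $\deg(x)\le 4$), all four grid neighbors $N,E,S,W$ of $x$ are assigned to $x$ and all lie in $F$; repeating the diagonal-force analysis shows each must force its axial target, so $NN,EE,SS,WW\notin S$. By the lemma, at least one of $N,E,S,W$ must have a second $S$-neighbor, which can only be a diagonal of $x$ (e.g.\ $y=NE$ when the multi neighbor is $v_1=N$, since the only candidates $NE,NW$ are diagonals). Reassign $g(v_1)=y$. The key termination observation is that the four grid neighbors of $y=NE$ include two axial neighbors of $x$ (namely $N$ and $E$), both still assigned to $x$ immediately before the reassignment; hence $|g^{-1}(y)|\le 2$ before and $\le 3$ after. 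Each iteration therefore strictly decreases $|\{x:|g^{-1}(x)|=4\}|$ by one without creating a new overloaded $S$-vertex, and the loop terminates after at most $|S|$ steps with the required $g$.

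The main obstacle is the structural lemma, specifically the case analysis that uses the injectivity of $f_1$ to rule out diagonal forces when $x$'s four grid neighbors all have $x$ as their unique $S$-neighbor; the subsequent NE-neighborhood argument is then a direct chain of implications. Once the lemma is in hand, termination of the reassignment loop is immediate from the geometric observation that diagonal reassignment targets share two grid neighbors with the overloaded vertex's neighborhood, preventing the load from cascading.
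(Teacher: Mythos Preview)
Your proof is correct and follows essentially the same route as the paper's. Both arguments hinge on the same two observations: (i) if all four grid neighbors of some $x\in S$ force in round~$2$, then the forces must be axial (a diagonal force $N\to NE$ would prevent $E$ from forcing), and (ii) the resulting constraints force a diagonal of $x$ to lie in $S$, to which a domination force can be reassigned without overloading the new target.

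The organizational differences are worth noting. You separate the set of forces into the round-$2$ choice $f_1$ and the domination choice $g$, fix $f_1$ once and for all, and cast the problem as finding a suitable $g$. Your structural lemma $|T_x|\le 3$ is independent of $g$, which lets you run a clean reassignment loop with an explicit termination argument (each step moves one vertex to a diagonal $y$ that already has two of its neighbors---namely two axial neighbors of $x$---assigned to $x$, so $|g^{-1}(y)|$ cannot exceed~$3$). The paper instead works directly with a single $\mathcal F$, shows in the $|Q_x|=4$ situation that a specific diagonal (e.g.\ $x_{NW}$) must lie in $S$, and reassigns one domination force; iteration is left implicit. Your decomposition makes the ``no cascading'' part of the argument more transparent, at the cost of a somewhat longer setup. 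Note also that your lemma's final case analysis silently uses $NE\notin S$; this is indeed immediate from $N\in T_x$ (since $S\cap N(N)=\{x\}$ forces $NE\notin S$), so there is no gap, but it would be worth making explicit.
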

\begin{proof} %
For any $x\in S$, $|Q_x|\leq 4$ since $\deg(x)\le 4$. %
Suppose that $|Q_x|=4$. We claim that the forces $x\to y\to w$  must occur in the same direction on the grid, e.g., if $y$ is the north neighbor of $x$, then $w$ is the north neighbor of $y$. Suppose not, e.g., $y$ is the north neighbor of $x$ and  $w$ is the west neighbor  of $y$.  Then  $w$ is a neighbor of the west neighbor of $x$, so the west neighbor of $x$ cannot perform a force in round 2, contradicting $|Q_x|=4$. The other directions are similar.  %
 Hence  the vertices in $Q_x$ must be  the four vertices that are distance 2 from $x$ in the four directions (the square vertices in Figure~\ref{four-d}); this applies only because $|Q_x|=4$. %

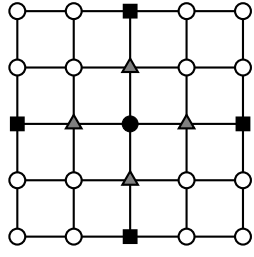
\begin{figure}[!h]
\centering
\begin{tikzpicture}[scale=.75,rotate=90]

\vertex (A) at (0,0) {};
\vertex (A) at (0,1){};
\vertex (A) at (1,0){};
\vertex (A) at (1,1){};
\node  at (2,0)[square]{};
\node at (2,1)[triangle]{};
\Bvertex (A) at (2,2){};
\node at (0,2)[square] {};
\node at (1,2) [triangle]{};
\vertex (A) at (3,0){};
\vertex (A) at (3,1){};
\node at (3,2)[triangle]{};
\vertex (A) at (3,3){};
\vertex (A) at (0,3){};
\vertex (A) at (1,3){};
\node at (2,3)[triangle]{};
\vertex (A) at (4,0){};
\vertex (A) at (4,1){};
\node at (4,2)[square]{};
\vertex (A) at (4,3){};
\vertex (A) at (0,4){};
\vertex (A) at (1,4){};
\node at (2,4)[square] {};
\vertex (A) at (3,4){};
\vertex (A) at (4,4){};
\begin{scope}[on background layer]
\draw [thick](0,0)--(4,0) ;
\draw[thick](0,1)--(4,1) ;
\draw[ thick](0,2)--(4,2) ;
\draw [thick](0,3)--(4,3) ;
\draw [thick](0,4)--(4,4) ;
\draw [thick](0,0)--(0,4) ;
\draw [thick](1,0)--(1,4) ;
\draw [thick](2,0)--(2,4) ;
\draw [thick](3,0)--(3,4) ;
\draw [thick](4,0)--(4,4) ;
\end{scope}
\end{tikzpicture}\\

\caption{The black circle vertex is $x\in S$, the triangle vertices are its neighbors and the square vertices are in $Q_x$. \label{four-d}}
\end{figure}

Let $x_N$ be the  north neighbor of $x$, let $x_W$ be the west neighbor of $x$, 
and let $x_{NN}$ be the north neighbor of $x_N$. In order to have $x_N=f_1(x_{NN})$, i.e., $x_N\to x_{NN}$ in round 2, the east and west neighbors of $x_N$, called $x_{NE}$ and $x_{NW}$,  must be observed in round 0 or round 1.  Suppose $x_{NW}$ is observed in round 1.  The south neighbor of $x_{NW}$ is $x_W$ and $x_W$ cannot observe $x_{NW}$  in round 1, nor can $x_N$ observe $x_{NW}$ in round 1.  Thus either the west neighbor $x_{NWW}$ or the north   neighbor $x_{NWN}$ of $x_{NW}$ observes $x_{NW}$ in round 1.  If $x_{NWW}\in S$, then the west neighbor of $x_W$ is observed in round 1, so $x_W$ cannot observe it in round 2, and similarly if $x_{NWN}\in S$ then $x_{NN}$ is observed in round 1.  So $x_{NW}$ cannot be observed in round 1 and thus $x_{NW}\in S$.
Then we can reassign the forcing chain $x\to x_N\to x_{NN}$ to $x_{NW}\to x_N\to x_{NN}$, obtaining $Q'_{x_{NW}}=Q_{x_{NW}}\cup\{x_{NN}\}$ and  $Q'_x=Q_x\setminus\{x_{NN}\}$. Since the south neighbor of $x_{NW}$ is $x_W$, which is still forced by $x$,  with the new assignment $|Q'_{x_{NW}}|\le 3$, and  $|Q'_x|\le 3$. The other directions are similar. 
\end{proof}

\begin{thm} Let  $n,m\ge 4$.  If $S$ is a power dominating set of $J_n\Box J_m$ that is not a dominating set, then  $\thpdx(J_n\Box J_m;S)\geq \lc\frac{nm}{4}\rc$ and $\thpdx(J_n\square J_m)=\gamma(J_n\square J_m)$.
\end{thm}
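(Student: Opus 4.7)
The plan is to prove the two claims in sequence. For the first, I would use that $S$ not being a dominating set forces $t := \ptpd(G; S) \geq 2$, so the vertex set partitions as $V(G) = S \sqcup P^{(1)}(S) \sqcup \cdots \sqcup P^{(t)}(S)$. Then I would bound each piece as follows: $|P^{(1)}(S)| \leq 4|S|$ because $\Delta(J_n \Box J_m) = 4$ and each $x \in S$ contributes at most four first-round observations; $|P^{(2)}(S)| \leq 3|S|$ by fixing a set of forces $\F$ as in Proposition \ref{three-of} so that $|Q_x| \leq 3$ for every $x \in S$, and noting that the forcing chains give a partition $P^{(2)}(S) = \bigsqcup_{x \in S} Q_x$ via the map $f_0$; and $|P^{(i)}(S)| \leq |P^{(1)}(S)| \leq 4|S|$ for $i \geq 3$ by Remark \ref{assign}. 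Summing, $nm \leq |S| + 4|S| + 3|S| + 4|S|(t-2) = 4|S|t$, so $\thpdx(G;S) = |S|t \geq \lceil nm/4 \rceil$ since $|S|t$ is an integer.

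For the equality $\thpdx(G) = \gamma(G)$, Observation \ref{o:basic-bds} gives $\thpdx(G) \leq \gamma(G)$, so I would focus on the reverse inequality. Let $S$ realize $\thpdx(G)$: if $S$ is dominating, then $\thpdx(G) = |S| \geq \gamma(G)$; if not, the first claim combined with integrality of $t$ gives $t \geq \lceil nm/(4|S|) \rceil$, so $|S|t \geq |S|\lceil nm/(4|S|) \rceil$. Using $|S| \geq \pd(G)$ together with the known formulas for $\pd(J_n \Box J_m)$ (such as \eqref{eq:pd-grid}) and the known domination numbers of grids, cylinders, and tori (all asymptotically $\approx nm/5$), I would verify case by case that $|S|\lceil nm/(4|S|) \rceil \geq \gamma(G)$ for every feasible $|S|$ in the critical range $[\pd(G), \lceil \gamma(G)/2 \rceil -1]$, while $|S|t \geq 2|S| \geq \gamma(G)$ dispatches the larger values of $|S|$.

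The hardest part will be this case analysis in the second claim for ``thin'' families such as $P_4 \Box P_m$ with $m \geq 5$, where $\gamma(G)$ exceeds $\lceil nm/4 \rceil$ and the bound from Claim~1 alone does not imply $|S|t \geq \gamma(G)$. For such graphs and the critical small values of $|S|$ (for instance $|S| \in \{2,3\}$ when $m=6$), I would argue that attaining $|S|t$ equal to the Claim~1 bound would force all of $|P^{(1)}| = 4|S|$, $|P^{(2)}| = 3|S|$, and $|P^{(i)}| = 4|S|$ for $i \geq 3$ simultaneously, which in turn requires $|S|$ interior vertices with pairwise disjoint closed neighborhoods, maximal $|Q_x| = 3$ for every $x \in S$, and no terminating forcing chains. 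A direct geometric check on these thin grids would show such configurations to be infeasible (typically because disjoint interior neighborhoods around the chosen seeds prevent any vertex from having a unique unobserved neighbor in round 2), boosting $|S|t$ past $\lceil nm/4 \rceil$ and completing the proof that $|S|t \geq \gamma(G)$ in all cases.
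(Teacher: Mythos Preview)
Your argument for the first claim is correct and matches the paper's proof line for line: the partition of $V(G)$, the bound $|P^{(1)}(S)|\le 4|S|$ from $\Delta=4$, the bound $|P^{(2)}(S)|\le 3|S|$ via Proposition~\ref{three-of}, and $|P^{(i)}(S)|\le |P^{(1)}(S)|$ from Remark~\ref{assign}, summed to give $nm\le 4|S|t$.

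For the second claim, your overall scheme is also the paper's: compare $\lceil nm/4\rceil$ (or the refinement $k\lceil nm/(4k)\rceil$) against known values of $\gamma(P_n\Box P_m)\ge\gamma(J_n\Box J_m)$, which succeeds for all but a short finite list of thin grids. The divergence, and the gap, is in how you handle that list. The paper simply invokes direct computation (Sage) to verify $\thpdx=\gamma$ for $P_4\Box P_5$, $P_4\Box P_6$, $P_4\Box P_9$, $P_4\Box C_5$, $P_4\Box C_9$, $P_5\Box P_8$, $P_6\Box P_6$, and to compute $\ppt(G,3)$ and $\ppt(G,5)$ for $P_5\Box P_{12}$ and $P_6\Box P_{10}$. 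Your substitute---the ``geometric infeasibility'' argument---does not work as stated. The claim that pairwise disjoint interior closed neighborhoods force every vertex of $P^{(1)}(S)$ to have at least two unobserved neighbors is false: a neighbor $z$ of one seed $x$ can have two of its \emph{other} neighbors lying in $N(y)$ for a different seed $y$ while $N[x]\cap N[y]=\emptyset$. Concretely, in $P_4\Box P_6$ with $S=\{(2,3),(3,5)\}$ the closed neighborhoods are disjoint and $|P^{(1)}(S)|=8$, yet $(2,4)$ has the single unobserved neighbor $(1,4)$ and $(3,4)$ has the single unobserved neighbor $(4,4)$, so round-2 forces do occur. The same phenomenon arises in $P_5\Box P_{12}$. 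Thus your heuristic does not rule out the tight configurations, and the proposal leaves the exceptional cases genuinely open; absent a replacement argument, you would be driven back to a finite computer check as in the paper.
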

\begin{proof} Suppose $\ppt(J_n\Box J_m;S)\geq 2$ and let $t=\ppt(J_n\Box J_m;S)$. Then $|P^{(i+1)}(S)|\leq |P^{(1)}(S)|$ for all $i\geq 0$ by Remark~\ref{assign}. Since the maximum degree in $J_n\square J_m$ is 4,  $|P^{(1)}(S)|\leq 4|S|$.  
By Proposition~\ref{three-of}, there is an assignment of forcing chains so that for each vertex $x\in S$, $|\{w\in P^{(2)}(S) : f_0(w)= x\}|\leq 3$. Therefore, 
$|P^{(2)}(S)|\leq 3|S|$, and thus
\[nm= |V(J_n\square J_m)|=|S| +\sum _{i=1}^t |P^{(i)}(S)|\leq|S|(1+4 +3+ 4(t-2))=|S|(4t). \]
This implies  $\thpdx(J_n\Box J_m;S)=t|S|\geq \lc\frac{nm}{4}\rc$. This lower bound applies whenever $\ppt(J_n\Box J_m;S)\geq 2$.

Since $J_n\square J_m$ contains $P_n\square P_m$ as a spanning subgraph, $\gamma(P_n\Box P_m)\ge \gamma(J_n\square J_m)$.
 It is easily verified algebraically that $\frac{nm}{4}\ge  \frac{(n+2)(m+2)}{5} -4$ for $n,m\ge 8$, and Chang  showed in \cite{Ch92} that $\lf \frac{(n+2)(m+2)}{5}\rf-4 \ge  \gamma(P_n\Box P_m)$ for $n,m\ge 8$ (see \cite{ACIOP11grid}).  For $n=7$, it is known that $\gamma(P_n\square P_m)=\lf\frac{5m+3}{3} \rf$ \cite{ACIOP11grid}.   It is easily verified algebraically that $\frac{7m}{4}\ge \frac{5m+3}{3}$ for $m\ge 12$, and computationally that $\lc\frac{7m}{4}\rc\ge \lf\frac{5m+3}{3}\rf$ for $m=7,\dots,11$.  
  Thus $\thpdx(J_n\square J_m)=\gamma(J_n\square J_m)$ for $n,m\ge 7$.  

For $n=4$, it is known that   $\gamma(P_4\Box P_m)=m$ if $m\neq 5,6,9$ and $\gamma(P_4\Box P_m)=m+1$ if $m=5,6,9$ \  \cite{ACIOP11grid}. 
Since $\frac{4m}{4}=m$, $\thpdx(J_4\Box J_m)=\gamma(J_4\Box J_m) $ for $m\neq 5,6,9$.  For $G=C_4\Box J_m$ with $m=5,6,9$ or $G=P_4\Box C_m$ with $m=6$, $\gamma(G)=m$, so $\thpdx(G)=\gamma(G) $.   For the cases $G=P_4\Box P_m$ with $m=5,6, 9$ and $G=P_4\Box C_m$ with $m=5,9$, $\thpdx(G)=\gamma(G)$ has been verified computationally \cite{sage} and these values are listed in Table \ref{tab:JnJm}.

For $n=5$, it is known that    $\gamma(P_5\Box P_m)= \lf\frac{6m+8}{5} \rf$ if $m\ne 7$ and $\gamma(P_5\Box P_7)= 9$  \  \cite{ACIOP11grid}. 
It is easily verified algebraically that  $\frac{5m}{4}\ge \frac{6m+8}{5} $ for $m\ge 32$. Straightforward computations show that  $\lc\frac{5m}{4}\rc\ge \gamma(P_5\Box P_m)$ for $5\le m\le 31$ except $m=8$ and $m=12$.
 For $G=P_5\Box C_m$ or $G=C_5\Box J_m$ with $m=8,12$, $\lc\frac{5m}{4}\rc\ge\gamma(G)$, so $\thpdx(G)=\gamma(G) $.  For the case  $G=P_5\Box P_8$, $\thpdx(G)=\gamma(G)$ has been verified  \cite{sage} and this value is listed in Table \ref{tab:JnJm}, leaving only $P_5\Box P_{12}$ (this case is discussed at the end of the proof). 

For $n=6$, it is known that  $\gamma(P_6\Box P_m)=  \lf\frac{10m+12}{7} \rf$ if $m\not \equiv 1\mod 7$ and $\gamma(P_6\Box P_m)=  \lf\frac{10m+10}{7} \rf$ if $m \equiv 1\mod 7$  \  \cite{ACIOP11grid}. 
It is easily verified algebraically that $\frac{6m}{4}\ge \frac{10m+12}{7} $ for $m\ge 24$. Straightforward computations show that $\lc\frac{6m}{4}\rc\ge \gamma(P_6\Box P_m) $ for $6\le m\le 23$ except $m=6$ and $m=10$. For $G=P_6\Box C_m$ or $G=C_6\Box J_m$ with $m=6,10$, $\lc\frac{6m}{4}\rc\ge\gamma(G)$, so $\thpdx(G)=\gamma(G) $.   For the case  $G=P_6\Box P_6$, $\thpdx(G)=\gamma(G)$ has been verified  \cite{sage} and this value is listed in Table \ref{tab:JnJm}, leaving only $P_6\Box P_{10}$.

\begin{table}[h] 
\begin{center}
\caption{Table of values of $\thpdx(J_n\square J_m)$ and $\thpdx(J_n\square J_m)$ for selected $n$ and $m$.  \label{tab:JnJm}} 
{\small \begin{tabular}{|c|c|c|c|c|c|c|c |c|c|c|}
\hline
$J_n\square J_m$ & $\thpdx$ & $\gamma$ & & $J_n\square J_m$ & $\thpdx$ & $\gamma$& & $J_n\square J_m$ & $\thpdx$ & $\gamma$\\
\hline
$P_4\Box P_5$ & 6 & 6 & & $P_4\Box P_6$ & 7 & 7 & & $P_4\Box P_9$ & 10 & 10 \\
\hline
$P_4\Box C_5$ & 6 & 6  & & $P_4\Box C_9$ & 10 & 10  &&  &  & \\
\hline
$P_5\Box P_8$ &11  & 11 & & $P_6\square P_6$ & 10 &10  & &  &  &  \\
\hline
\end{tabular}}
 \end{center} \vspace{-10pt}
\end{table}

It remains to check $P_5\Box P_{12}$ and $P_6\Box P_{10}$, both of which have order 60, domination number 16 \cite{ACIOP11grid},  and power domination number 2 (see Equation \eqref{eq:pd-grid}).  Let $G$ denote one of these graphs.  Suppose $S$ is a power dominating set of $G$ that is not a dominating set of $G$, so $\ppt(G;S)\geq 2$.  Then $|S|\ppt(G;S)\geq \frac{mn}4=15$.  Since power propagation time is an integer, this implies $\ppt(G;S)\ge \lc\frac{15}{|S|}\rc$.  Thus
$\ppt(G,k)\ge \lc\frac{15}{k}\rc$ by using $k=|S|$.
Observe that  $k\ppt(G,k)\ge 16$ for $k\ge 8$, so consider    
$k\lc\frac{15}{k}\rc$ for $k=2,\dots,7$.  It is immediate that $k\lc\frac{15}{k}\rc\ge 16$ unless $k=3$ or $k=5$. We use \cite{sage} to compute
 $\ppt(P_5\Box P_{12},3)=6$, $\ppt(P_5\Box P_{12},5)=4$, $\ppt(P_6\Box P_{10},3)=9$, and $\ppt(P_6\Box P_{10},5)=5$. 
 This completes the proof.
  \end{proof}
 
The only remaining cases  are $P_2\square J_m$ and $J_3\square J_m$, which are handled in the next theorem.

\begin{thm} For $m\geq 2$, $\thpdx(P_2\square J_m) = \gamma(P_2\square J_m)$,  
and for $m\geq 3$, $\thpdx(J_3\square J_m) $ $= \gamma(J_3\square J_m)$.
\end{thm}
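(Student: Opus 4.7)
My plan is to mirror the approach from the previous theorem (for $J_n \square J_m$ with $n, m \geq 4$), adapted to the narrower widths $n = 2$ and $n = 3$. Throughout, if $S$ is a dominating set of $G$ then $\thpdx(G;S) = |S| \geq \gamma(G)$ is immediate, so the substance of the proof is to show $|S| t \geq \gamma(G)$ whenever $S$ is a power dominating set with $t := \ppt(G;S) \geq 2$.

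\textbf{For $P_2 \square J_m$:} the maximum degree is $3$, so Remark~\ref{assign} gives $|P^{(i)}(S)| \leq 3|S|$ for every $i \geq 1$. The key step is to strengthen Proposition~\ref{three-of} to the bound $|Q_x| \leq 2$ for each $x \in S$. For an interior $x = (1, j) \in S$ with neighbors $(2, j), (1, j-1), (1, j+1)$, if all three performed distinct zero forces in round~$2$, then $(2, j)$ would force one of $(2, j-1)$ or $(2, j+1)$; say $(2, j) \to (2, j-1)$, which forces $(2, j+1)$ to be observed in round~$1$ (so $(1, j+1) \to (1, j+2)$ is legal). However, $(1, j-1)$'s unobserved neighbors at the start of round~$2$ would include $(2, j-1)$, so $(1, j-1)$ can only force $(2, j-1)$, conflicting with $(2, j) \to (2, j-1)$ and violating the injectivity of $f_1$. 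Hence $|Q_x| \leq 2$, giving
\[ 2m \leq |S| + 3|S| + 2|S| + 3|S|(t-2) = 3t|S|, \]
so $|S|t \geq \lceil 2m/3 \rceil$. Using $\gamma(P_2 \square P_m) = \lceil (m+1)/2 \rceil$ (and $\gamma(P_2 \square C_m) \leq \gamma(P_2 \square P_m)$), it is routine to check $\lceil 2m/3 \rceil \geq \gamma(P_2 \square J_m)$ for all $m \geq 2$.

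\textbf{For $J_3 \square J_m$:} the maximum degree is $4$, and the plan is to re-establish $|Q_x| \leq 3$ for each $x \in S$ by a similar case analysis. Fixing an interior $x = (2, j) \in S$ with neighbors $x_N, x_S, x_W, x_E$ and assuming all four performed distinct zero forces in round~$2$, if $x_N \to (1, j-1)$, then at the start of round~$2$ the vertex $x_W = (2, j-1)$ has $(1, j-1)$ as an unobserved neighbor, so $x_W$ can only force $(1, j-1)$, conflicting with $x_N$ by injectivity of $f_1$. By symmetry across the choice of direction for $x_N$, the four-direction forcing is impossible, so $|Q_x| \leq 3$. This yields $3m \leq |S|(4t)$, i.e., $|S|t \geq \lceil 3m/4 \rceil$.

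\textbf{Main obstacle.} The chief difficulty is comparing $\lceil 3m/4 \rceil$ with $\gamma(J_3 \square J_m)$. Using $\gamma(P_3 \square P_m) = \lfloor (3m+4)/4 \rfloor$ (and the analogous or smaller values when $J_3 = C_3$ or $J_m = C_m$), equality holds for $m \not\equiv 0 \pmod 4$, but the bound falls short of $\gamma$ by exactly one when $m \equiv 0 \pmod 4$ in $P_3 \square P_m$. Closing that one-unit gap is the delicate step: simultaneous saturation of $|P^{(1)}(S)| = 4|S|$, $|Q_x| = 3$ for every $x \in S$, and $|P^{(i)}(S)| = |P^{(1)}(S)|$ for all $i \geq 3$ is extremely rigid in a three-row grid, and I plan to rule it out structurally (the extremal pattern would force an untenable configuration of $S$ after a few rounds). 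As a backup, since $\gamma - \lceil 3m/4 \rceil$ is bounded by $1$, only finitely many exceptional small cases remain, and these can be verified computationally with \cite{sage}, following the pattern used in the preceding theorem's proof.
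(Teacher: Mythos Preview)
Your $P_2\square J_m$ argument is essentially the paper's: you obtain $|Q_x|\le 2$, then $2m\le 3t|S|$, and compare with $\gamma(P_2\square P_m)=\lfloor (m+2)/2\rfloor=\lceil (m+1)/2\rceil$. That part is fine.

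The $J_3\square J_m$ case has a genuine gap. Your bound $|Q_x|\le 3$ for a degree-$4$ vertex $x$ only reproduces the inequality $3m\le 4t|S|$, i.e.\ $t|S|\ge \lceil 3m/4\rceil$, and as you note this falls one short of $\gamma(P_3\square P_m)=\lfloor(3m+4)/4\rfloor$ whenever $m\equiv 0\pmod 4$. Your proposed remedies do not work: the ``rigidity'' argument is only a hope, not a proof, and the computational backup is impossible because the shortfall occurs for \emph{infinitely many} $m$, not finitely many.

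The paper closes this gap by proving the sharper bound $|Q_x|\le 2$ for every degree-$4$ vertex $x\in S$ (after choosing $\mathcal F$ so that row-neighbors in $S$ perform the domination force whenever possible). The key case analysis shows that if $x\to x_N$ and $x_N$ forces in round~$2$, then a second neighbor of $x$ is prevented from contributing to $Q_x$. Combined with the trivial bounds $|N(x)\setminus S|\le 3$ and $|Q_x|\le 3$ for degree-$3$ vertices, this yields the uniform estimate $|P^{(1)}(S)|+|P^{(2)}(S)|\le 6|S|$, hence
\[
3m \;\le\; |S|\bigl(1+6+4(t-2)\bigr) \;=\; |S|(4t-1),
\]
so $t|S|\ge t\cdot\frac{3m}{4t-1}>\frac{3m}{4}$, and integrality gives $t|S|\ge \lfloor 3m/4\rfloor+1=\lfloor(3m+4)/4\rfloor$. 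That extra ``$-1$'' in $4t-1$ is exactly what you are missing; your argument that $|Q_x|=4$ is impossible is a first step toward it, but you need to push the same neighbor-conflict idea one level further to reach $|Q_x|\le 2$.
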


\begin{proof}  It is known that $\gamma(P_2\square P_m) = \lf \frac{m+2}{2}\rf$ 
\cite{ACIOP11grid}. Since $P_2\square C_m$ contains $P_2\square P_m$ as a spanning subgraph, $\lf 
\frac{m+2}{2}\rf\geq \gamma(P_2\square C_m)$. 
We show that if $S$ is a power dominating set of $P_2\Box J_m$ that is not a dominating set, then  $\thpdx(P_2\Box J_m;S)\geq \lc\frac{2m}{3}\rc$. It is straightforward to check that $\lc\frac{2m}{3}\rc\geq  \lf \frac{m+2}{2}\rf$ for $m\geq 2$, which then implies that $\thpdx(P_2\square J_m)=\gamma(P_2\square J_m)$.
For $x\in S$, denote the north, east, south, and west neighbors of  $x$ by $x_N, x_E, x_S$ and $x_W$.

Suppose $\ppt(P_2\Box J_m;S)\geq 2$ and let $t=\ppt(P_2\Box J_m;S)$. Then $|P^{(i+1)}(S)|$ $\leq |P^{(1)}(S)|$ for all $i\geq 0$ by Remark~\ref{assign}. Choose a set of forces $\F$ of $S$, and for $x\in S$ recall that  $Q_x= \{w\in P^{(2)}(S) : f_0(w)=x\}$.  Since the maximum degree in $P_2\square J_m$ is 3,  $|P^{(1)}(S)|\leq 3|S|$ and $|Q_x|\leq 3$ for $x\in S$.  Suppose $x\in S$ is on the bottom row of $P_2\Box J_m$. %
If $x_N$ forces to the east or west in round 2,  then the neighbor of $x$ in the same direction  cannot force  in round 2.   
Therefore, $|Q_x|\le 2$, $|P^{(2)}(S)|\leq 2|S|$, and thus
\[2m=|V(P_2\square J_m)|= |S| +\sum _{i=1}^t |P^{(i)}(S)|\leq|S|(1+3 +2+ 3(t-2))=|S|(3t). \]
This implies  $\thpdx(P_2\Box J_m;S)=t|S|\geq \lc\frac{2m}{3}\rc$.

It is known that $\gamma(P_3\square P_m) =  \lf \frac{3m+4}{4}\rf$ \cite{ACIOP11grid}, and thus $\gamma(J_3\square J_m)\leq \lf \frac{3m+4}{4}\rf $. 
 We show that  if $S$ is a power dominating set of $J_3\Box J_m$ that is not a dominating set, then  $\thpdx(J_3\Box J_m;S)\geq\lf \frac{3m+4}{4}\rf$ and therefore $\thpdx(J_3\square J_m)=\gamma(J_3\square J_m)$.
 Suppose $\ppt(J_3\Box J_m;S)\geq 2$ and let $t=\ppt(J_3\Box J_m;S)$.  Since the maximum degree in $J_3\square J_m$ is 4,  $|P^{(1)}(S)|\leq 4|S|$. 

Choose a set of forces $\F$ of $S$ such that  for each $y\in P^{(1)}(S)$, if $y$ is  adjacent to a vertex in $S$ along a row edge, i.e., $y$ is a {row-neighbor} of a vertex in $S$, then $y$ is forced by one of its row-neighbors in $S$. 
For $x\in S$, if $\deg(x) =3$, then $|Q_x|\leq 3 $. Let $x\in S$ with $\deg(x)=4$; we show this implies $|Q_x|\leq 2$. %
 If for both $x_N$ and $x_S$,  this vertex is not forced by $x$ or does not force in round 2, then $|Q_x|\leq 2$ is immediate. %

So suppose that $x\to x_N$ and $x_N$ forces in round 2. Then $x_N$ cannot force to the north, because if $J_3=P_3$, then there is no north neighbor of $x_N$, and if $J_3=C_3$, then the north neighbor of $x_N$ is  $x_S$. Without loss of generality, suppose $x_N$ forces its west neighbor $x_{NW}$ in round 2.   
This implies  $x_W$ cannot force in round 2.
In order for $x_N\to x_{NW}$ in round 2, the east neighbor  $x_{NE}$ of $x_N$  must have $\rd(x_{NE})=0$ or $\rd(x_{NE})=1$. If $\rd(x_{NE})=0$, then $x_{NE}\in S$ and  $x_N$ is a row-neighbor of $x_{NE}$, so $x_N$ would not be forced by $x$.  

Thus  $\rd(x_{NE})=1$, so $x_{NE}$ is adjacent to a vertex $u$ in $S$. We show this implies another neighbor of $x$ cannot contribute to $Q_x$, and thus $|Q_x|\le 2$. 
If $u$ is the south neighbor of $x_{NE}$, then $u=x_E$, so  $x_E$ does not contribute to $Q_x$. 
If $J_3=C_3$ and $u$ is the north neighbor of $x_{NE}$, then  $u$ is the east neighbor of  $x_S$, so  $x_S$ cannot be forced by $x$ (because it is a row-neighbor of $u\in S$); thus  $x_S$ does not contribute to $Q_x$. 
If $u$ is the east neighbor of $x_{NE}$, then  $x_E$ cannot force east in round 2, because $u$ is adjacent to the east neighbor of  $x_E$.  If $x_E$ forces south in round 2, then $x_S$ cannot force in round 2.
 Hence $|Q_x|\leq 2$.  
 
 Since $3+3=4+2=6$, we have $|P^{(1)}(S)| +|P^{(2)}(S)|\leq 6|S|$, and thus
\vspace{-4pt}
\[3m=|V(J_3\square J_m)|= |S| +\sum _{i=1}^t |P^{(i)}(S)|\leq|S|(1+6+ 4(t-2))=|S|(4t-1). \]
This implies  $\thpdx(J_3\Box J_m;S)=t|S|\geq t \lc\frac{3m}{4t-1}\rc$. 
Then $t \lc\frac{3m}{4t-1}\rc\ge t\frac{3m}{4t-1}  > t \frac{3m}{4t} \ge \lf \frac{3m}{4}\rf$. Since the first and last terms are integers,  $t \lc\frac{3m}{4t-1}\rc\geq \lf \frac{3m}{4}\rf+1 =\lf \frac{3m+4}{4} \rf$.
\end{proof}

We conclude with a corollary that summarizes the situation for Cartesian products of connected graphs in which each factor graph has degree at most 2. 

\begin{cor} For all $n,m\geq 1$,   $\thpdx(J_n\square J_m)=\gamma(J_n\square J_m)$, where $J_k=P_k$ or $J_k=C_k$ for $k\ge 3$ and $J_k=P_k$ for $k=1,2$.
\end{cor}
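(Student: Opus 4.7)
The plan is to observe that the corollary is a simple consolidation of the two preceding theorems, together with a base case handling $n=1$. Since the Cartesian product is commutative up to isomorphism, $J_n\square J_m\cong J_m\square J_n$, so we may assume without loss of generality that $n\le m$. Then I would partition into four cases based on the value of $n$, invoking a previously established result in each case.

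First I would dispose of the trivial case $n=1$. Here $J_1=P_1$ (since the cycle option is excluded for $k\le 2$), so $J_1\square J_m\cong J_m$, which is either $P_m$ or $C_m$. By Observation \ref{p:pathcycle}, $\thpdx(P_m)=\gamma(P_m)=\lceil m/3\rceil$ and likewise $\thpdx(C_m)=\gamma(C_m)=\lceil m/3\rceil$, so the conclusion holds. Next, for $n=2$ (and hence $m\ge 2$), the first half of the immediately preceding theorem gives $\thpdx(P_2\square J_m)=\gamma(P_2\square J_m)$, which is the only subcase since $J_2=P_2$. For $n=3$ (and hence $m\ge 3$), the second half of that theorem yields $\thpdx(J_3\square J_m)=\gamma(J_3\square J_m)$. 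Finally, when $n\ge 4$ (so also $m\ge 4$), the first theorem in this subsection directly gives the equality.

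There is essentially no obstacle here, since every case is covered by an earlier result in the paper; the only thing to be careful about is the symmetry reduction to $n\le m$ and verifying that the range hypotheses of the invoked theorems ($m\ge 2$ when $n=2$, $m\ge 3$ when $n=3$, and $n,m\ge 4$ in the main grid theorem) collectively cover all $n,m\ge 1$ once symmetry is applied. The proof itself can be written in a few lines as a case analysis.
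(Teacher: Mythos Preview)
Your proposal is correct and matches the paper's intent: the corollary is stated there as a summary with no proof, so the case analysis you give (symmetry to $n\le m$, then $n=1$ via Observation~\ref{p:pathcycle}, $n=2$ and $n=3$ via the immediately preceding theorem, and $n\ge 4$ via the main grid theorem) is exactly the implicit assembly the paper leaves to the reader. The only detail worth noting is that the $n=1$ case with $m=1$ or $m=2$ is also covered by Observation~\ref{p:pathcycle} since $\thpdx(P_m)=\gamma(P_m)=\lceil m/3\rceil$ holds for all $m\ge 1$.
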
%

\section*{Acknowledgements}

The authors gratefully acknowledge the support and hospitality of the Institute for Mathematics and its Applications (IMA) during the Workshop for Women in Graph Theory and Applications (WIGA), where this collaboration was initiated and the work described in this article began; IMA funds were used to support WIGA, but no NSF funds awarded to IMA were used. 

We are grateful for the support provided by the AWM ADVANCE Research Communities Program, funded by NSF-HDR-1500481, Career Advancement for Women Through Research-Focused Networks.

The work of A. Trenk was partially supported by a grant from the Simons Foundation (\#426725).
The work of C. Mayer was partially supported by Sandia National Laboratories.  Sandia National Laboratories is a multimission laboratory managed and operated by National Technology \& Engineering Solutions of Sandia, LLC, a wholly owned subsidiary of Honeywell International Inc., for the U.S. Department of Energy's National Nuclear Security Administration under contract DE-NA0003525. This paper describes objective technical results and analysis. Any subjective views or opinions that might be expressed in the paper do not necessarily represent the views of the U.S. Department of Energy or the United States Government.

\end{document}